\newcommand\TheKeywords{%
	Nonsmooth nonconvex optimization,
	Proximal alternating linearized minimization,
	Bregman distance,
     Multi-block relative smoothness,
     KL inequality,
     Orthogonal nonnegative matrix factorization%
}
\newcommand\TheSubjclass{%
	90C06, 
	90C25, 
	90C26, 
	49J52, 
	49J53.
}
\newcommand\TheTitle{%
	Multi-block Bregman proximal alternating linearized minimization and its application to  orthogonal nonnegative matrix factorization%
}
\newcommand\TheShortTitle{%
	Multi-block Bregman proximal alternating linearized minimization%
}
\newcommand\TheShortAuthor{%
	M. Ahookhosh, L.T.K. Hien, N. Gillis, and P. Patrinos%
}
\newcommand\TheFunding{%
	MA and PP acknowledge the support by the \emph{Research Foundation Flanders (FWO)} research projects G086518N and G086318N;
	\emph{Research Council KU Leuven} C1 project No. C14/18/068;
	\emph{Fonds de la Recherche Scientifique - FNRS and the Fonds Wetenschappelijk Onderzoek - Vlaanderen} (FWO) under EOS project no 30468160 (SeLMA).
	NG  also acknowledges the support by the European Research Council (ERC starting grant no 679515).
}
	\headers{\TheShortTitle}{\TheShortAuthor}
	\title{%
		\TheTitle%
		\thanks{%
			Submitted to the editors \today.%
			\funding{\TheFunding}%
		}%
	}
	\author{%
		Masoud Ahookhosh%
		\thanks{%
			\TheAddressKU.
			{\tt
				\{%
					\href{mailto:masoud.ahookhosh@esat.kuleuven.be}{masoud.ahookhosh},%
					\href{mailto:panos.patrinos@esat.kuleuven.be}{panos.patrinos}%
				\}\href{mailto:masoud.ahookhosh@esat.kuleuven.be,panos.patrinos@esat.kuleuven.be}{@esat.kuleuven.be}%
			}%
		}%
		\and
		Le Thi Khanh Hien%
		\thanks{%
			Department of Mathematics and Operational Research, Faculté polytechnique, Université de Mons.
Rue de Houdain 9,
7000 Mons,
Belgium.
			{\tt
				\{%
					\href{mailto:ThiKhanhHien.LE@umons.ac.be>}{ThiKhanhHien.LE},%
					\href{mailto:nicolas.gillis@umons.ac.be}{nicolas.gillis}%
				\}%
				\href{mailto:ThiKhanhHien.LE@umons.ac.be,nicolas.gillis@umons.ac.be}{@umons.ac.be}%
			}%
		}%
		\and
		Nicolas Gillis\texorpdfstring{\footnotemark[3]}{}%
		\and
		Panagiotis Patrinos\texorpdfstring{\footnotemark[2]}{}%
	}%
	\title[\TheShortTitle]{\TheTitle}
	\author[\TheShortAuthor]{%
		Masoud Ahookhosh\textsuperscript{1},\ 
		Le Thi Khanh Hien\textsuperscript{2}.\ 
		Nicolas Gillis\textsuperscript{2},\ and\ 
		Panagiotis Patrinos\textsuperscript{1}%
	}
	\thanks{\hspace*{-\parindent}\textsuperscript{\small 1}\TheAddressKU\\\textsuperscript{\small 2}\TheAddressMons\\\TheFunding}
\begin{document}
	
	\ifams\else
		\maketitle
	\fi
	\begin{abstract}
		We introduce and analyze \refBPALM[] and  \refaBPALM[], two multi-block proximal alternating linearized minimization algorithms using Bregman distances for solving structured nonconvex problems. The objective function is the sum of a multi-block relatively smooth function (i.e., relatively smooth by fixing all the blocks except one) and block separable (nonsmooth) nonconvex functions. It turns out that the sequences generated by our algorithms are subsequentially convergent to critical points of the objective function, while they are globally convergent under KL inequality assumption. Further, the rate of convergence is further analyzed for functions satisfying the {\L}ojasiewicz's gradient inequality. We apply this framework to orthogonal nonnegative matrix factorization (ONMF) that satisfies all of our assumptions and the related subproblems are solved in closed forms, where some preliminary numerical results is reported. 
	\end{abstract}
	
	\ifams
		\maketitle
	\else
	\vspace{-2mm}
		\begin{keywords}\TheKeywords\end{keywords}
		\vspace{-2mm}
		\begin{AMS}\TheSubjclass\end{AMS}
	\fi


\vspace{-3mm}
	\section{Introduction}
		
Consider the structured nonsmooth nonconvex minimization problem
\begin{equation}\label{eq:P}
	\minimize_{\bm x=(x_1,\ldots,x_N)\in\R^{\sum_i n_i}}~~\varphi(\bm x)\equiv f(\bm x)+\sum_{i=1}^N g_i(x_i),
\end{equation}
where we will systematically assume the following hypotheses (see \Cref{sec:algBPALM} for details):
\begin{ass}[requirements for composite minimization \eqref{eq:P}]\label{ass:basic:fgh}~
\begin{enumeratass}
	\item\label{ass:basic:g}%
		\(\func {g_i}{\R^{n_i}}{\Rinf\coloneqq\R\cup\set\infty}\) is proper and lower semicontinuous (lsc);
	\item\label{ass:basic:f}%
		\(\func{f}{\R^{n}}{\Rinf}\) is $\C^1(\interior\dom h)$ and \DEF{$(L_1,\ldots,L_N)$-smooth relative to $h$}; here $n=\sum_{i=1}^N n_i$;
	\item\label{ass:basic:h}%
		\(\func{h}{\R^{n}}{\Rinf}\) is \DEF{multi-block} strictly convex, \(1\)-coercive and essentially smooth;
	\item\label{ass:basic:argmin}%
		 $\varphi$ has a nonempty set of minimizers, i.e., \(\argmin\varphi\neq\emptyset\), and $\dom \varphi\subseteq \interior\dom h$;
	\item 
		the first-order oracles of $f$, $g_i\ (i=1,\ldots,N)$, and $h$ are available.
\end{enumeratass}
\end{ass}

Although, the problem \eqref{eq:P} has a simple structure, it covers a broad range of optimization problems arising in signal and image processing, statistical and machine learning, control and system identification. Consequently, needless to say, there is a huge number of algorithmic studies around solving the optimization problems of the form \eqref{eq:P}. Among all of such methodologies, we are interested in the class of \DEF{alternating minimization} algorithms such as block coordinate descent \cite{beck2015cyclic,beck2013convergence,latafat2019block,nesterov2012efficiency,razaviyayn2013unified,richtarik2014iteration,tseng2001convergence,tseng2009coordinate}, block coordinate \cite{combettes2015stochastic,fercoq2019coordinate,latafat2019new}, and Gauss-Seidel methods \cite{auslender1976optimisation,bertsekas1989parallel,grippo2000convergence}, which assumes that all blocks are fixed except one and solves the corresponding auxiliary problem with respect to this block, update the latter block, and continue with the others. In particular, the proximal alternating minimization has  received much attention in the last few years; see for example \cite{attouch2008alternating,attouch2007new,attouch2006inertia,attouch2010proximal,
attouch2013convergence,beck2016alternating}. Recently, the proximal alternating linearized minimization and its variation has been developed to handle \eqref{eq:P}; see for example \cite{bolte2014proximal,pock2016inertial,shefi2016rate}.

Traditionally, the Lipschitz (H\"older) continuity of partial gradients of $f$ in \eqref{eq:P} is a necessary tool for providing the convergence analysis of optimization algorithms; see, e.g., \cite{bolte2014proximal,pock2016inertial}. It is, however, well-known that it is not the Lipschitz (H\"older) continuity of gradients playing a key role in such analysis, but one of its consequence: an upper estimation of $f$ including a Bregman distance called \DEF{descent lemma} ; cf. \cite{bauschke2016descent,lu2018relatively}. This idea is central to convergence analysis of many optimization schemes requiring such an upper estimation; see, e.g., \cite{ahookhosh2019bregman,bauschke2019linear,bauschke2016descent,bolte2018first,teboulle2018simplified,hanzely2018fastest,hanzely2018accelerated,lu2018relatively}. 
In this paper, we propose a multi-block extension of the descent lemma given in \cite{bauschke2016descent,lu2018relatively} and propose a Bregman proximal alternating linearized minimization (\refBPALM[]) algorithm and its adaptive version (\refaBPALM[]) for \eqref{eq:P}.


		\subsection{Contribution}
			 Our contribution is summarized as follows:
\begin{enumerate}[wide, labelwidth=!, labelindent=0pt]
    \item (\DEF{Bregman proximal alternating linearized minimization}) We introduce \refBPALM[], a multi-block generalization of the proximal alternating linearized minimization (PALM) \cite{bolte2014proximal} using Bregman distances, and its adaptive version (\refaBPALM[]). To do so, we extend the notion of relative smoothness \cite{bauschke2016descent,lu2018relatively} to its multi-block counterpart to support a structured problem of the form \eqref{eq:P}. Owing to multi-block relative smoothness of $f$, unlike PALM, our algorithm does not need to know the local Lipschitz moduli of partial gradients $\nabla_i f$ ($i=1,\ldots,N$) and their lower and upper bounds, which are hard to provide in practice. 
    \item (\DEF{Efficient framework for ONMF}) Exploiting a suitable kernel for Bregman distance, it turns out that the objective of orthogonal nonnegative matrix factorization (ONMF) is multi-block relatively smooth, and the subproblems of our algorithms are solved in closed forms making them suitable for large-scale data analysis problems. To the best of our knowledge, \refBPALM[] and \refaBPALM[] are the first algorithms with rigorous convergence theory for ONMF.
\end{enumerate}

\subsection{Related works}	Closly related to our framework, there are two papers \cite{li2019provable,wang2018block}. However, we notice that \cite{wang2018block} uses a sum separable kernel function which is just a special case of our multi-block kernel functions (see \Cref{exa:popularKerFunc}), and the paper only provides a limited convergence theory. Regarding \cite{li2019provable}, an algorithm (named B-PALM) proposed that is just a special case of our \refBPALM[] when $N=2$, $g_1=g_2=0$, and $f\in\mathcal{C}^2$, which restrict its applications. We stress that involving the block separable nonsmooth nonconvex functions $g_i$ and considering $N>2$ make our analysis different from those of \cite{li2019provable}.

\subsection{Organization}
This paper has four sections, besides this introductory section.
In \Cref{sec:algBPALM}, we introduce the notion of multi-block relative smoothness, and verify the fundamental properties of Bregman proximal alternating linearized mapping. 
In \Cref{sec:convAnalysis}, we introduce \refBPALM[]  and  \refaBPALM[] and investigate their convergence analysis.
In \Cref{sec:appONMF}, we show that ONMF satisfies our assumptions, the related subproblems are solved in closed forms, and report our numerical results. Finally, \Cref{sec:conclusion} delivers some conclusions.
		\subsection{Notation}
			We denote by $\Rinf\coloneqq\R\cup\set{\infty}$ the extended-real line. For the identity matrix $I_n$, we set $U_i\in\R^{n\times n_i}$ such that
 $I_n=(U_1,\ldots,U_N)\in\R^{n\times n}$. 
The open ball of radius \(r\geq 0\) centered in \(x\in\R^p\) is denoted as \(\ball xr\).
The set of cluster points of \(\seq{x^k}\) is denoted as \(\omega(x^0)\).
A function $\func{f}{\R^p}{\Rinf}$ is \DEF{proper} if $f>-\infty$ and $f\not\equiv\infty$, in which case its \DEF{domain} is defined as the set
$\dom f\coloneqq\set{x\in\R^p}[f(x)<\infty]$. For \(\alpha\in\R\),
\(
	[f\leq\alpha]
{}\coloneqq{}
	\set{x\in\R^p}[f(x)\leq\alpha]
\)
is the \DEF{\(\alpha\)-(sub)level set} of \(f\); \([f\geq\alpha]\) and \([f=\alpha]\) are defined similarly.
We say that \(f\) is \DEF{level bounded} if \([f\leq\alpha]\) is bounded for all \(\alpha\in\R\).
A vector $v\in\partial f(x)$ is a \DEF{subgradient} of $f$ at $x$, and the set of all such vectors is called the \DEF{subdifferential} $\partial f(x)$ \cite[Definition 8.3]{rockafellar2011variational}, i.e. 
\begin{align*}
	\partial f(x)
{}={} &
	\set{v\in\R^p}[
		\exists\seq{x^k,v^k}~\text{s.t.}~ x^k\to x,~f(x^k)\to f(x),~
		\widehat\partial f(x^k)\ni v^k\to v
	],
\shortintertext{%
	and $\widehat\partial f(x)$ is the set of \DEF{regular subgradients} of $f$ at $x$, namely
}
	\widehat\partial f(x)
{}={} &
	\set{v\in\R^p}[
		f(z)\geq f(x){}+{}\innprod{v}{z-x}{}+{}o(\|z-x\|),~
		\forall z\in\R^p\vphantom{\seq{x^k}}
	].
\end{align*}

	\section{Multi-block Bregman proximal alternating linearized mapping}\label{sec:algBPALM}
		We first establish the notion multi-block relative smoothness, which is an extension of the relative smoothness \cite{bauschke2016descent,lu2018relatively} for problems of the form \eqref{eq:P}. We then introduce Bregman alternating linearized mapping and study some of its basic properties. For notation clarity, we will use bold lower-case letters (e.g., $\bm x$, $\bm y$, $\bm z$) for vectors in $\R^{\sum n_i}$ and use normal lower-case letters (e.g., $z$, $x_i$, $y_i$) for vectors in $\R^{n_i}$.

In order to extend the definition of Bregman distances for the multi-block problem \eqref{eq:P}, we first need to introduce the notion of \DEF{multi-block kernel} functions, which will coincide with the standard one (cf. \cite[Definition 2.1]{ahookhosh2019bregman}) if $N=1$.

\begin{defin}[multi-block convexity and kernel function]\label{def:kernel}%
	Let \(\func{h}{\R^n}{\Rinf}\) be a proper and lsc function with \(\interior\dom h\neq\emptyset\) and such that  $h\in\C^1(\interior\dom h)$.
	For a fixed vector $\bm x\in\R^n$, we define the function \(\func{h_{\bm x}^i}{\R^{n_i}}{\Rinf}\) given by 
	\begin{equation}\label{eq:hxi}
		 h_{\bm x}^i( z):=h(\bm x+U_i(z-x_i)).
	\end{equation}
	Then, we say that \(h\) is
	\begin{enumerate}
	\item 
	     \DEF{multi-block (strongly/strictly) convex} if the function $h_{\bm x}^i(\cdot)$ is (strongly/strictly) convex for all $\bm x\in \dom h$ and $i=1,\ldots,N$;
	\item 
	    \DEF{multi-block locally strongly convex} around $\bm x^\star= (x_1^\star,\ldots,x_N^\star)$ if, for $i=1,\ldots,N$, there exists $\delta>0$ and $\sigma_h^i>0$ such that
        \[
            h_{\bm x}^i(x_i)\geq h_{\bm x}^i(y_i)+\innprod{\nabla_i h(\bm y)}{x_i-y_i}+\tfrac{\sigma_h^i}{2}\|x_i-y_i\|^2\quad \forall \bm x,\bm y\in \ball{\bm x^\star}{\delta};
        \]
	\item
		a \DEF{multi-block kernel function} if $h$ is multi-block convex and $h_{\bm x}^i(\cdot)$ is $1$-coercive for all $\bm x\in \dom h$ and $i=1,\ldots,N$, i.e., $\lim_{\|z\|\to\infty}\tfrac{h_{\bm x}^i(z)}{\|z\|}=\infty$;
	\item
		\DEF{multi-block essentially smooth}, if for every sequence $\seq{\bm x^k}\subseteq\interior\dom h$ converging to a boundary point of $\dom h$, we have $\|\nabla_i h(\bm x^k)\|\to\infty$  for all $i=1,\ldots,N$;
	\item
		of \DEF{multi block Legendre} type if it is multi-block essentially smooth and multi-block strictly convex.
	\end{enumerate}
\end{defin}

\begin{es}[popular kernel functions]
\label{exa:popularKerFunc}
    There are many kernel functions satisfying \Cref{def:kernel}. For example, for $N=1$, energy, Boltzmann-Shannon entropy, Fermi-Dirac entropy (cf. \cite[Example 2.3]{bauschke2018regularizing}) and several examples in \cite[Section 2]{lu2018relatively}; and for $N=2$ see two examples in \cite[Section 2]{li2019provable}. Two important classes of multi-block kernels are \DEF{sum separable kernels}, i.e.,
    $
        h(x_1,\ldots,x_N)=h_1(x_1)+\ldots+h_N(x_N)
   $,
    and \DEF{product separable kernels}, i.e.,
    $
        h(x_1,\ldots,x_N)=h_1(x_1)\times\ldots\times h_N(x_N)
    $,
    see such a kernel for ONMF in \Cref{pro:relSmoothNMF0}.
\end{es}

We now give the definition of \DEF{Bregman distances} (cf. \cite{bregman1967relaxation}) for multi-block kernels.
 
\begin{defin}[Bregman distance]\label{def:BregDist}
	For a kernel function \(h\), the \DEF{Bregman distance} $\func{\D}{\R^n\times\R^n}{\Rinf}$ is given by
	\begin{equation}\label{eq:bregman}
		\D(\bm y,\bm x)
	{}\coloneqq{}
		\begin{ifcases}
			h(\bm y)-h(\bm x)-\innprod{\nabla h(\bm x)}{\bm y-\bm x} & \bm x\in\interior\dom h
		\\
			\infty\otherwise.
		\end{ifcases}
	\end{equation}
\end{defin}

Fixing all blocks except the $i$-th one, the Bregman distance with respect to this block is given by 
\begin{align*}
    \D(\bm x+U_i(y_i-x_i),\bm x)&=h(\bm x+U_i(y_i-x_i))-h(\bm x)-\langle\nabla h(\bm x),U_i(y_i-x_i)\rangle\\
    &=h_{\bm x}^i(y_i)-h_{\bm x}^i(x_i)-\langle\nabla_i h(\bm x),y_i-x_i\rangle,
\end{align*}
which measures the proximity between $\bm y$ and $\bm x$ with respect to the $i$-th block of variables.
Moreover, the kernel \(h\) is multi-block convex if and only if \(\D(\bm x+U_i(y_i-x_i),\bm x)\geq 0\) for all \(\bm y\in\dom h\) and \(\bm x\in\interior \dom h\) and $i=1,\ldots,N$. 
Note that if \(h\) is multi-block strictly convex, then \(\D(\bm x+U_i(y_i-x_i),\bm x)= 0\) ($i=1,\ldots,N$) if and only if \(x_i=y_i\). 

We are now in a position to present the notion of \DEF{multi-block relative smoothness}, which is the central tool for our analysis in \Cref{sec:convAnalysis}. 

\begin{defin}[multi-block relative smoothness]
\label{def:mbRelSmooth}
Let \(\func{h}{\R^n}{\Rinf}\) be a multi-block kernel and let \(\func{f}{\R^n}{\Rinf}\) be a proper and lower semicontinuous function. 
If there exists $L_i\geq 0$ ($i=1,\ldots,N$) such that the functions $\func{\phi_{\bm x}^i}{\R^{n_i}}{\Rinf}$ given by 
\begin{align*}
    \phi_{\bm x}^i(z):=L_ih(\bm x+U_i(z-x_i))-f(\bm x+U_i(z-x_i))
\end{align*}
are convex for all $\bm x\in \dom h$ and $i=1,\ldots,N$, then, $f$ is called \DEF{$(L_1,\ldots,L_N)$-smooth relative to $h$}.
\end{defin}

Note that if $N=1$, the multi-block relative smoothness is reduced to standard relative smoothness, which was introduced only recently in \cite{bauschke2016descent,lu2018relatively}. In this case, if \(f\) is \(L\)-Lipschitz continuous, then both \(\nicefrac{L}{2}\|\cdot\|^2-f\) and \(f-\nicefrac{L}{2}\|\cdot\|^2\) are convex, i.e., the relative smoothness of \(f\) generalizes the notions of Lipschitz continuity using Bregman distances. If $N=2$, this definition will be reduced to the relative bi-smoothness given in \cite{li2019provable} for $h, f\in\mathcal{C}^2$.

We next characterize the notion of multi-block relative smoothness. 

\begin{prop}[characterization of multi-block relative smoothness] 
\label{fac:relSmoothEqvi}
    Let \(\func{h}{\R^n}{\Rinf}\) be a multi-block kernel and let \(\func{f}{\R^n}{\Rinf}\) be a proper 
    lower semicontinuous function and \(f\in\mathcal{C}^1\). Then, the following statements are equivalent:
    \begin{enumerateq}
    \item \label{fac:relSmoothEqvi1}
    $(L_1,\ldots,L_N)$-smooth relative to $h$; 
    \item \label{fac:relSmoothEqvi2}
    for all \((\bm x,\bm y)\in\interior\dom h\times \interior\dom h\) and $i=1,\ldots,N$, 
    \begin{equation}\label{eq:upperIneq}
        f(\bm x+U_i(y_i-x_i)) \leq f(\bm x)+\innprod{\nabla_i f(\bm x)}{y_i-x_i}+L_i \D(\bm x+U_i(y_i-x_i),\bm x);
    \end{equation}
    \item  \label{fac:relSmoothEqvi3}
    for all \((\bm x,\bm y)\in\interior\dom h\times \interior\dom h\) and $i=1,\ldots,N$,
    \begin{equation}\label{eq:upperEqvi}
        \innprod{\nabla_i f(\bm x)-\nabla_i f(\bm y)}{x_i-y_i}\leq L_i \innprod{\nabla_i h(\bm x)-\nabla_i h(\bm y)}{x_i-y_i};
    \end{equation}
    \item \label{fac:relSmoothEqvi4}
    if \(f\in\mathcal{C}^2(\interior\dom f)\) and 
    \(h\in\mathcal{C}^2(\mathrm{\bf int}\dom h)\)
    for all \(\bm x\in\interior\dom h\), then 
    \begin{equation}\label{eq:descentEqviC2}
        L_i \nabla_{x_ix_i}^2 h(\bm x)-\nabla_{x_ix_i}^2 f(\bm x)\succeq 0,
    \end{equation}
    for $i=1,\ldots,N$.
    \end{enumerateq}
\end{prop}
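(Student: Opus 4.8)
The plan is to reduce every clause to the one-block relative-smoothness characterization of \cite{bauschke2016descent,lu2018relatively}, applied block by block. Fix $i\in\set{1,\ldots,N}$ and, for $\bm x\in\interior\dom h$, consider alongside $h_{\bm x}^i$ from \eqref{eq:hxi} the restricted function $f_{\bm x}^i(z):=f(\bm x+U_i(z-x_i))$, so that $\phi_{\bm x}^i=L_ih_{\bm x}^i-f_{\bm x}^i$. Two elementary remarks do all the work. First, by the chain rule $f_{\bm x}^i,h_{\bm x}^i\in\C^1(\interior\dom h_{\bm x}^i)$ with $\nabla f_{\bm x}^i(x_i)=\nabla_if(\bm x)$, $\nabla h_{\bm x}^i(x_i)=\nabla_ih(\bm x)$ (and, when $f,h\in\C^2$, $\nabla^2f_{\bm x}^i(x_i)=\nabla_{x_ix_i}^2f(\bm x)$, $\nabla^2h_{\bm x}^i(x_i)=\nabla_{x_ix_i}^2h(\bm x)$), while the identity for $\D(\bm x+U_i(y_i-x_i),\bm x)$ displayed after \Cref{def:BregDist} exhibits it as exactly the one-block Bregman distance of $h_{\bm x}^i$ between $y_i$ and $x_i$. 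Second, $\phi_{\bm x}^i$ depends on $\bm x$ only through the blocks $x_j$ with $j\neq i$: if $\bm x'$ agrees with $\bm x$ outside block $i$ then $\bm x'+U_i(z-x_i')=\bm x+U_i(z-x_i)$, hence $\phi_{\bm x'}^i\equiv\phi_{\bm x}^i$. Since $h$ is a multi-block kernel, each $h_{\bm x}^i$ is convex with convex domain and of class $\C^1$ on $\interior\dom h_{\bm x}^i$; all four clauses then concern the single object $\phi_{\bm x}^i=L_ih_{\bm x}^i-f_{\bm x}^i$, with $x_i,y_i$ ranging over $\interior\dom h_{\bm x}^i$ while the blocks $j\neq i$ are held fixed — indeed only $x_i$, $y_i$ and those shared blocks enter \eqref{eq:upperIneq}--\eqref{eq:upperEqvi}, so both are really statements about a point and its perturbation in the $i$-th block.

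With this reduction in hand, the equivalences are restatements of textbook convex-analysis facts about $\phi_{\bm x}^i$, which I would run through as a cycle. For \ref{fac:relSmoothEqvi1}$\Rightarrow$\ref{fac:relSmoothEqvi2}: convexity of $\phi_{\bm x}^i$ yields the gradient inequality $\phi_{\bm x}^i(y_i)\ge\phi_{\bm x}^i(x_i)+\innprod{\nabla\phi_{\bm x}^i(x_i)}{y_i-x_i}$ at the interior point $x_i$; substituting $\phi_{\bm x}^i=L_ih_{\bm x}^i-f_{\bm x}^i$, moving the $f_{\bm x}^i$-terms to the left and recognising $\D(\bm x+U_i(y_i-x_i),\bm x)$ on the right produces \eqref{eq:upperIneq}. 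For \ref{fac:relSmoothEqvi2}$\Rightarrow$\ref{fac:relSmoothEqvi3}: add \eqref{eq:upperIneq} to its copy with the roles of $\bm x$ and $\bm y$ interchanged (legitimate because $\bm x+U_i(y_i-x_i)=\bm y$ and $\bm y+U_i(x_i-y_i)=\bm x$), then simplify the cross term via $\D(\bm y,\bm x)+\D(\bm x,\bm y)=\innprod{\nabla_ih(\bm x)-\nabla_ih(\bm y)}{x_i-y_i}$, an identity that uses $\bm y-\bm x=U_i(y_i-x_i)$. For \ref{fac:relSmoothEqvi3}$\Rightarrow$\ref{fac:relSmoothEqvi1}: rewritten through the chain-rule formulas, \eqref{eq:upperEqvi} says precisely that $\nabla\phi_{\bm x}^i$ is monotone on $\interior\dom h_{\bm x}^i$, hence $\phi_{\bm x}^i$ is convex there. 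Finally \ref{fac:relSmoothEqvi1}$\Leftrightarrow$\ref{fac:relSmoothEqvi4} in the $\C^2$ case: convexity of $\phi_{\bm x}^i$ is equivalent to $\nabla^2\phi_{\bm x}^i\succeq 0$ on $\interior\dom h_{\bm x}^i$, and $\nabla^2\phi_{\bm x}^i(z)=L_i\nabla_{x_ix_i}^2h(\bm x+U_i(z-x_i))-\nabla_{x_ix_i}^2f(\bm x+U_i(z-x_i))$, which is \eqref{eq:descentEqviC2}.

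I expect the only genuinely delicate step to be the ``monotone gradient (equivalently, one-point gradient inequality) $\Rightarrow$ convex'' passage in \ref{fac:relSmoothEqvi3}$\Rightarrow$\ref{fac:relSmoothEqvi1}: there one must invoke that $h_{\bm x}^i$ — hence $\phi_{\bm x}^i$ — has convex domain (this is exactly the multi-block convexity of $h$ built into \Cref{def:kernel}), and that a proper lsc function convex on the interior of its convex domain is convex, so that \ref{fac:relSmoothEqvi1} holds in the precise sense of \Cref{def:mbRelSmooth}. A secondary point needing one line of care is the domain bookkeeping between $f$ and $h$: since $f\in\C^1(\interior\dom h)$ under \Cref{ass:basic:fgh} and $\dom\varphi\subseteq\interior\dom h$, every evaluation of $f$, $\nabla f$ (and $\nabla^2f$ in \ref{fac:relSmoothEqvi4}) above takes place where these are defined, so no further hypothesis is required. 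Everything else is the chain rule and rearrangement of terms.
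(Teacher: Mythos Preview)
Your approach is correct and is exactly the paper's own: the paper's entire proof is the single sentence ``Fixing all the blocks except one of them, the results can be concluded in the same way as \cite[Proposition~1.1]{lu2018relatively},'' which is precisely your reduction to the one-block characterization via the restricted functions $f_{\bm x}^i$ and $h_{\bm x}^i$. Your write-up simply unpacks that citation (the gradient inequality, symmetrization, monotone-gradient $\Rightarrow$ convex, and the Hessian criterion) and records the chain-rule identities that make the translation work; in particular, your observation that \eqref{eq:upperIneq}--\eqref{eq:upperEqvi} only see $x_i$, $y_i$, and the shared off-block coordinates is exactly the ``fix all blocks except one'' step the paper invokes.
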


\begin{proof}
Fixing all the blocks except one of them, the results can be concluded in the same way as \cite[Proposition 1.1]{lu2018relatively}.
\end{proof}

\subsection{Bregman proximal alternating linearized mapping}\label{sec:bregProx}
Recall that if $N=1$, for a kernel function \(\func{h}{\R^n}{\Rinf}\) and a proper lower semicontinuous function \(\func{g}{\R^n}{\Rinf}\), the Bregman proximal mapping is given by
\begin{equation}\label{eq:bprox}
	\prox_{\gamma g}^h(x):=
	\argmin_{z\in\R^n}\set{
		g(z) + \tfrac{1}{\gamma}\D_h(z,x)
	}.
\end{equation}
which is a generalization of the classical one using the Bregman distance \eqref{eq:bregman} in place of the Euclidean distance; see, e.g., \cite{chen1993convergence} and references therein. We note that
\[
\prox_{\gamma g}^h(x)=\set{y\in\dom g\cap\dom h ~|~ g(y)+\tfrac{1}{\gamma} \D_h(y,x)=\min_z \set{g(z) +\tfrac{1}{\gamma}\D_h(z,x)	}<+\infty},
\]
which implies $\dom \prox_{\gamma g}^h \subset \mathbf{int}\dom h,~\range \prox_{\gamma g}^h \subset \dom g\cap\dom h$. The function $g$ is \DEF{$h$-prox-bounded} if there exists $\gamma>0$
such that $\min_z \set{g(z) +\tfrac{1}{\gamma}\D_h(z,x)}>-\infty$ for some $x\in\R^n$; cf. \cite{ahookhosh2019bregman}. We next extend this definition to our multi-block setting.

\begin{defin}[multi-block $h$-prox-boundedness] 
\label{def:bregman} 
    A function $\func{g}{\R^{n}}{\Rinf}$ is \DEF{multi-block $h$-prox-bounded} if for each $i\in \set{1,\ldots,N}$ there exists $\gamma_i>0$ and $\bm x\in\R^n$ such that 
    \[
        g^{\nicefrac{h}{\gamma_i}}(\bm x):=\min_{z\in\R^{n_i}} \set{g(\bm x+U_i(z-x_i)) +\tfrac{1}{\gamma_i}\D(\bm x+U_i(z-x_i),\bm x)}>-\infty.
    \]
The supremum of the set of all such $\gamma_i$ is the threshold $\gamma_{i,g}^h$ of the $h$-prox-boundedness, i.e.,
\begin{equation}\label{eq:dProxBound}
    \gamma_{i,g}^{h}:=\sup \set{\gamma_i>0 ~|~ \exists \bm x\in\R^n~\mathrm{s.t.}~
    g^{\nicefrac{h}{\gamma_i}}(\bm x)>-\infty}.
\end{equation}
\end{defin}

For the problem \eqref{eq:P}, we have $g=\sum_{i=1}^N g_i$ leading to
\begin{equation}\label{eq:ghGamma}
	g^{\nicefrac{h}{\gamma_i}}(\bm x)=\sum_{j\neq i} g_i(x_i)+\min_{z\in\R^{n_i}} \set{g_i(z) +\tfrac{1}{\gamma_i}\D(\bm x+U_i(z-x_i),\bm x)},
\end{equation}
i.e., we therefore denote $\gamma_{g_i}^h:=\gamma_{i,g}^h$. 
If $g$ is multi-block $h$-prox-bounded for $\overline{\gamma}_i>0$, so is for all $\gamma_i\in{(0,\overline{\gamma}_i)}$. We next present equivalent conditions to this notion.

\begin{prop}[characteristics of multi-block $h$-prox-boundedness] 
\label{pro:proxBoundedness}%
	For a multi-block kernel function \(\func{h}{\R^n}{\Rinf}\) and proper and lsc functions $\func{g_i}{\R^{n_i}}{\Rinf}$ with $i=1,\ldots,N$, the following statements are equivalent:
	\begin{enumerateq}
	\item\label{pro:proxBoundedness1}
		$g=\sum_{i=1}^N g_i$ is multi-block \(h\)-prox-bounded;
	\item\label{pro:proxBoundedness2}
		for all $i=1,\ldots,N$ and $h_{\bm x}^i$ given in \eqref{eq:hxi}, $g_i+r_i h_{\bm x}^i$ is bounded below on $\R^{n_i}$ for some $r_i\in\R$;
	\item\label{pro:proxBoundedness3}
		for all $i=1,\ldots,N$, $\liminf_{\|z\|\to \infty}\nicefrac{g_i(z)}{h_{\bm x}^i(z)}>-\infty$.
	\end{enumerateq}
\end{prop}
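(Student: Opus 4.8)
The plan is to reduce the statement to its single-block version, one block at a time, and then close the cycle \textup{(i)}$\Rightarrow$\textup{(ii)}$\Rightarrow$\textup{(iii)}$\Rightarrow$\textup{(i)}. Fix $i\in\set{1,\ldots,N}$ and pick any $\bm x\in\interior\dom h$ whose blocks $x_j$ lie in $\dom g_j$ for $j\neq i$ (possible since each $g_j$ is proper). By \eqref{eq:ghGamma}, $g^{\nicefrac{h}{\gamma_i}}(\bm x)>-\infty$ if and only if $\inf_{z\in\R^{n_i}}\set{g_i(z)+\tfrac1{\gamma_i}\D(\bm x+U_i(z-x_i),\bm x)}>-\infty$; since \Cref{def:bregman} lets the reference point depend on $i$, multi-block $h$-prox-boundedness of $g=\sum_i g_i$ is exactly the conjunction over $i$ of the ordinary prox-boundedness of $g_i$ relative to the slice $h_{\bm x}^i$ from \eqref{eq:hxi}. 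Throughout I would use the identity $\D(\bm x+U_i(z-x_i),\bm x)=h_{\bm x}^i(z)-h_{\bm x}^i(x_i)-\innprod{\nabla_i h(\bm x)}{z-x_i}$, which exhibits this Bregman term as $h_{\bm x}^i(z)$ plus an affine function of $z$, together with the fact that $h_{\bm x}^i$ is $1$-coercive (hence $h_{\bm x}^i(z)\to+\infty$ as $\|z\|\to\infty$). It then suffices to prove the three equivalences for a fixed $i$, carrying a common reference point $\bm x$.

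For \textup{(i)}$\Rightarrow$\textup{(ii)}: substituting the identity into $g_i(z)+\tfrac1{\gamma_i}\D(\bm x+U_i(z-x_i),\bm x)\geq m>-\infty$ shows $g_i+\tfrac1{\gamma_i}h_{\bm x}^i$ is minorized by an affine function; for any $r_i>\tfrac1{\gamma_i}$ the surplus $(r_i-\tfrac1{\gamma_i})h_{\bm x}^i$ is a positive multiple of a $1$-coercive function and hence dominates that affine minorant, so $g_i+r_i h_{\bm x}^i$ is bounded below. For \textup{(ii)}$\Rightarrow$\textup{(iii)}: dividing $g_i(z)+r_i h_{\bm x}^i(z)\geq c$ by $h_{\bm x}^i(z)>0$ for $\|z\|$ large gives $\nicefrac{g_i(z)}{h_{\bm x}^i(z)}\geq\nicefrac{c}{h_{\bm x}^i(z)}-r_i\to -r_i$, so the $\liminf$ is at least $-r_i>-\infty$. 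For \textup{(iii)}$\Rightarrow$\textup{(i)}: write $-\rho_i$ for the (finite) $\liminf$ and fix $\gamma_i<\tfrac1{\rho_i+1}$. On a large enough ball $\ball{0}{R}$ the prox-objective $g_i(z)+\tfrac1{\gamma_i}\D(\bm x+U_i(z-x_i),\bm x)$ is bounded below, since $g_i$ is bounded below there (proper lsc on a compact set) while $\D(\bm x+U_i(z-x_i),\bm x)\geq 0$ by multi-block convexity; for $\|z\|>R$ one has $g_i(z)\geq-(\rho_i+1)h_{\bm x}^i(z)$, so, using the identity once more, the prox-objective is at least $\bigl(\tfrac1{\gamma_i}-\rho_i-1\bigr)h_{\bm x}^i(z)-\tfrac1{\gamma_i}\innprod{\nabla_i h(\bm x)}{z}+\mathrm{const}$, which is bounded below because the positive multiple of the $1$-coercive $h_{\bm x}^i$ dominates the linear term. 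Hence $\inf_z\set{g_i(z)+\tfrac1{\gamma_i}\D(\bm x+U_i(z-x_i),\bm x)}>-\infty$, i.e. $g^{\nicefrac{h}{\gamma_i}}(\bm x)>-\infty$, which is \textup{(i)}.

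The only genuine difficulty — and the reason $1$-coercivity of the kernel, not merely its convexity, is indispensable — is that $\D(\bm x+U_i(\cdot-x_i),\bm x)$ is not the slice $h_{\bm x}^i$ itself but an affine perturbation of it, so each implication must invoke coercivity to absorb the extra linear term; this is also the step that ties the threshold $\gamma_{i,g}^h$ of \eqref{eq:dProxBound} to the asymptotic ratio $-\nicefrac{g_i(z)}{h_{\bm x}^i(z)}$. A minor point is that conditions \textup{(ii)}--\textup{(iii)} are insensitive to the reference point $\bm x$; one can check this directly, or — as done above — simply keep the same $\bm x$ in all three statements. The single-block skeleton of the argument is classical; compare \cite[Exercise 1.24]{rockafellar2011variational} and the Bregman counterpart underlying \cite{ahookhosh2019bregman}.
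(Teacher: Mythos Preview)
Your argument is correct and follows essentially the same approach as the paper: both exploit the identity $\D(\bm x+U_i(z-x_i),\bm x)=h_{\bm x}^i(z)+(\text{affine in }z)$ together with $1$-coercivity of $h_{\bm x}^i$ to absorb linear terms, and both split into a compact region (handled by lower semicontinuity) and its complement (handled by the asymptotic ratio). The only difference is organizational---the paper proves the two biconditionals \textup{(i)}$\Leftrightarrow$\textup{(ii)} and \textup{(ii)}$\Leftrightarrow$\textup{(iii)} separately, whereas you close the cycle \textup{(i)}$\Rightarrow$\textup{(ii)}$\Rightarrow$\textup{(iii)}$\Rightarrow$\textup{(i)}, merging the paper's \textup{(iii)}$\Rightarrow$\textup{(ii)}$\Rightarrow$\textup{(i)} into a single direct step; one cosmetic point is that your choice ``$\gamma_i<\tfrac{1}{\rho_i+1}$'' should be read as ``$\tfrac{1}{\gamma_i}>\rho_i+1$'' to cover the case $\rho_i+1\leq 0$.
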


\begin{proof}
		Suppose $g^{\nicefrac{h}{\gamma_i}}(\bm x)>-\infty$ and let $r_i>\tfrac{1}{\gamma_i}$.
		Then, for all $i=1,\ldots,N$, it holds that
		\begin{align*}
			g_i(z)&+r_i h_{\bm x}^i(z)
		=
		    g_i(z)+ \tfrac{1}{\gamma}\D(\bm x+U_i(z-x_i),\bm x)+r_i h_{\bm x}^i(z)- \tfrac{1}{\gamma}\D(\bm x+U_i(z-x_i),\bm x)
		\\
		&\geq
			g^{\nicefrac{h}{\gamma_i}}(\bm x)-\sum_{j\neq i} g_i(x_i)
			{}+{}
			\tfrac{r_i\gamma_i-1}{\gamma_i}h_{\bm x}^i(z)
			{}+{}
			\tfrac{1}{\gamma_i}(h(\bm x)+\innprod{\nabla_i h(\bm x)}{z-x_i})
		{}\eqqcolon{}
			\tilde g_i(z).
		\end{align*}
		Notice that \(\tilde g_i\) is strictly convex and coercive, and as such is lower bounded.
		Conversely, suppose that $\alpha_i\coloneqq\inf g_i+r_i h_{\bm x}^i>-\infty$.
		Then, from \eqref{eq:ghGamma}, we obtain
		\begin{align*}
			g^{\nicefrac{h}{\gamma_i}}(\bm x)
			&=\sum_{j\neq i} g_i(x_i)+\min_{z\in\R^{n_i}} \set{g_i(z) +\tfrac{1}{\gamma_i}\D(\bm x+U_i(z-x_i),\bm x)},\\
			&\geq
				\sum_{j\neq i} g_i(x_i)+\alpha_i + \inf_z\set{-r_i h_{\bm x}^i(z) + \tfrac{1}{\gamma_i}\D(\bm x+U_i(z-x_i),\bm x)}			\\
			&\geq
				\sum_{j\neq i} g_i(x_i)+\alpha_i - \tfrac{1}{\gamma_i} h(\bm x)+\tfrac{1}{\gamma_i} \innprod{\nabla_i h(\bm x)}{x_i}+ \inf_z\set{\tfrac{1-\gamma_i r_i}{\gamma_i} h_{\bm x}^i(z) - \tfrac{1}{\gamma_i} \innprod{\nabla_i h(\bm x)}{z}},
		\end{align*}
		which is finite, owing to \(1\)-coercivity of $z\mapsto \tfrac{1-\gamma_i r_i}{\gamma_i} 
		h_{\bm x}^i(z) - \tfrac{1}{\gamma_i} \innprod{\nabla_i h(\bm x)}{z}$.
		
		Suppose that $\alpha_i\coloneqq\inf g_i+r_i h_{\bm x}^i>-\infty$.
		Since $h_{\bm x}^i(\cdot)$ is $1$-coercive, we have 
		\[
			\liminf_{\|z\|\to\infty} \tfrac{g_i(z)}{h_{\bm x}^i(z)} \geq -r_i+ \liminf_{\|z\|\to\infty} \tfrac{\alpha_i}{h_{\bm x}^i(z)}=-r_i>-\infty.
		\]
		Conversely, suppose $\liminf_{\|z\|\to \infty} \nicefrac{g_i(z)}{h_{\bm x}^i(z)}>-\infty$.
		Then, there exists \(\ell_i,M_i\in\R\) such that \(\nicefrac{g_i(z)}{h_{\bm x}^i(z)}\geq\ell_i\) whenever \(\|z\|\geq M_i\).
		In particular
		\[
			\inf_{\|z\|\geq M_i}g_i(z)+r_i h_{\bm x}^i(z)
		{}\geq{}
			\inf_{\|x\|\geq M_i}h_{\bm x}^i(z)(\ell_i+r_i)
		{}>{}
			-\infty,
		\]
		where the last inequality follows from coercivity of \(h_{\bm x}^i\).
		Since \(\inf_{\|z\|\leq M_i}{g_i(z)+r_i h_{\bm x}^i(z)}>-\infty\) owing to lower semicontinuity, we conclude that \(g_i+r_i h_{\bm x}^i\) is lower bounded on \(\R^n\).
		\qedhere
\end{proof}

Let us now define the function $\func{\M}{\R^n\times\R^n}{\Rinf}$ as
\begin{equation}\label{eq:modelM}
   \M(\bm z,\bm x):= \innprod{\nabla f(\bm x)}{\bm z-\bm x}+\tfrac{1}{\gamma}\D(\bm z,\bm x)+\sum_{i=1}^N g_i(z_i)
\end{equation}
and the set-valued \DEF{Bregman proximal alternating linearized mapping} $\ffunc{\T}{\R^n}{\R^{n_i}}$ as
\begin{equation}\label{eq:tx}
    \T(\bm x):= \argmin_{z\in\R^{n_i}} \M[][\nicefrac{h}{\gamma_i}](\bm x+U_i(z-x_i),\bm x),
\end{equation}
which reduces to the Bregman forward-backward splitting mapping if $N=1$; cf. \cite{bolte2018first,ahookhosh2019bregman}. 

\begin{rem}[majorization model]
    Note that invoking \Cref{fac:relSmoothEqvi2}, the multi-block ($L_1,\ldots,L_N$)-relative smoothness assumption of $f$ entails a \DEF{majorization model} 
    \begin{align*}
    	\varphi(\bm x+U_i(y_i-x_i))&\leq f(\bm x)+\innprod{\nabla_i f(\bm x)}{y_i-x_i}+L_i \D(\bm x+U_i(y_i-x_i),\bm x)+g_i(y_i)+\sum_{j\neq i} g_j(x_j)\\
    	&\leq f(\bm x)+\innprod{\nabla_i f(\bm x)}{y_i-x_i}+\tfrac{1}{\gamma_i} \D(\bm x+U_i(y_i-x_i),\bm x)+g_i(y_i)+\sum_{j\neq i} g_j(x_j),
    \end{align*}
    for $\gamma_i\in(0,\nicefrac{1}{L_i})$. 
\end{rem}

In the next lemma, we show that the cost function $\varphi$ is monotonically decreasing by minimizing the model \eqref{eq:modelM} with respect to each block of variables.

\begin{lem}[Bregman proximal alternating inequality]
\label{lem:proxAltIneq}
    Let the conditions in \Cref{ass:basic:fgh} hold, and let $\overline{z}\in\T(\bm x)$ with $\gamma_i\in(0,\nicefrac{1}{L_i})$. Then, 
    \begin{equation}\label{eq:fiOverx}
        \varphi(\bm x+U_i(\overline{z}-x_i))\leq\varphi(\bm x)-\tfrac{1-\gamma_i L_i}{\gamma_i}\D(\bm x+U_i(\overline{z}-x_i),\bm x),
    \end{equation}
    for all $i=1,\ldots,N$.
\end{lem}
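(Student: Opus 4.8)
The plan is to unwind the definition of $\overline{z}\in\T(\bm x)$ as a minimizer and combine the resulting optimality inequality with the majorization model coming from multi-block relative smoothness. First I would use that $\overline{z}$ minimizes $z\mapsto \M[][\nicefrac{h}{\gamma_i}](\bm x+U_i(z-x_i),\bm x)$ over $\R^{n_i}$, and compare its value at $z=\overline{z}$ with its value at the trivial feasible point $z=x_i$. Since $\D(\bm x+U_i(x_i-x_i),\bm x)=\D(\bm x,\bm x)=0$ and $\innprod{\nabla_i f(\bm x)}{x_i-x_i}=0$, evaluating the model at $z=x_i$ gives simply $\sum_{j}g_j(x_j)$. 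Hence
\begin{equation*}
    \innprod{\nabla_i f(\bm x)}{\overline{z}-x_i}+\tfrac1{\gamma_i}\D(\bm x+U_i(\overline{z}-x_i),\bm x)+g_i(\overline{z})+\sum_{j\neq i}g_j(x_j)\leq \sum_{j}g_j(x_j),
\end{equation*}
i.e. $\innprod{\nabla_i f(\bm x)}{\overline{z}-x_i}+\tfrac1{\gamma_i}\D(\bm x+U_i(\overline{z}-x_i),\bm x)+g_i(\overline{z})\leq g_i(x_i)$.

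Next I would invoke \Cref{fac:relSmoothEqvi2} (the characterization \eqref{eq:upperIneq}) with $\bm y$ chosen so that $y_i=\overline{z}$, which yields
\begin{equation*}
    f(\bm x+U_i(\overline{z}-x_i))\leq f(\bm x)+\innprod{\nabla_i f(\bm x)}{\overline{z}-x_i}+L_i\D(\bm x+U_i(\overline{z}-x_i),\bm x).
\end{equation*}
Adding $g_i(\overline{z})+\sum_{j\neq i}g_j(x_j)$ to both sides recovers $\varphi(\bm x+U_i(\overline{z}-x_i))$ on the left. Then I substitute the optimality inequality from the first paragraph to replace $\innprod{\nabla_i f(\bm x)}{\overline{z}-x_i}+g_i(\overline{z})$ by the upper bound $g_i(x_i)-\tfrac1{\gamma_i}\D(\bm x+U_i(\overline{z}-x_i),\bm x)$, giving
\begin{equation*}
    \varphi(\bm x+U_i(\overline{z}-x_i))\leq f(\bm x)+g_i(x_i)+\sum_{j\neq i}g_j(x_j)+\Bigl(L_i-\tfrac1{\gamma_i}\Bigr)\D(\bm x+U_i(\overline{z}-x_i),\bm x).
\end{equation*}
Recognizing $f(\bm x)+\sum_{j}g_j(x_j)=\varphi(\bm x)$ and rewriting $L_i-\tfrac1{\gamma_i}=-\tfrac{1-\gamma_i L_i}{\gamma_i}$ gives exactly \eqref{eq:fiOverx}.

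The argument is essentially a two-line combination, so there is no deep obstacle; the main points requiring care are bookkeeping ones. I need to be sure that $\overline{z}\in\T(\bm x)$ is indeed nonempty and that the minimum is attained and finite — this is where \Cref{ass:basic:argmin} (so that $\bm x\in\dom\varphi\subseteq\interior\dom h$ keeps all the relevant quantities finite), the multi-block $h$-prox-boundedness/coercivity used implicitly in defining $\T$, and $\gamma_i<\nicefrac1{L_i}$ (ensuring $\tfrac1{\gamma_i}-L_i>0$, so the model is genuinely coercive and the displayed descent is a genuine decrease) all enter. I would also double-check that \eqref{eq:upperIneq} is applicable, which requires $\bm x\in\interior\dom h$ and $\bm x+U_i(\overline{z}-x_i)\in\interior\dom h$; the latter follows because $\range\T\subseteq\dom g_i\cap\dom h$ together with $\dom\varphi\subseteq\interior\dom h$, after noting that the iterate stays in $\dom\varphi$ since its $\varphi$-value is finite by the very inequality being proved. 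Modulo these domain checks, the proof is complete.
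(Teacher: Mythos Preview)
Your proposal is correct and follows essentially the same route as the paper: compare the model value at $\overline z$ with its value at $z=x_i$ to get $\innprod{\nabla_i f(\bm x)}{\overline z-x_i}+\tfrac1{\gamma_i}\D(\bm x+U_i(\overline z-x_i),\bm x)+g_i(\overline z)\le g_i(x_i)$, then plug this into the multi-block descent inequality \eqref{eq:upperIneq}. Your additional domain/well-definedness remarks are more careful than the paper's own proof but do not change the argument.
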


\begin{proof}
    For $i\in\set{1,\ldots,N}$, \eqref{eq:tx} is simplified in the form
    \begin{equation}\label{eq:tix}
        \begin{split}
            \T(\bm x)&= \argmin_{z\in\R^{n_i}} \set{\innprod{\nabla f(\bm x)}{U_i(z-x_i)} +\tfrac{1}{\gamma_i}\D(\bm x+U_i(z-x_i),\bm x)+\sum_{i=1}^N g_i(z)}\\
            &= \argmin_{z\in\R^{n_i}} \set{\innprod{\nabla_i f(\bm x)}{z-x_i} +\tfrac{1}{\gamma_i}\D(\bm x+U_i(z-x_i),\bm x)+ g_i(z)}.
        \end{split}
    \end{equation}
    Considering $\overline{z}\in\T(\bm x)$, we have
    \[
        \innprod{\nabla_i f(\bm x)}{\overline{z}-x_i} +\tfrac{1}{\gamma_i}\D(\bm x+U_i(\overline{z}-x_i),\bm x) +g_i(\overline{z})\leq g_i(x_i).
    \]
    Since $f$ is $(L_1,\ldots,L_N)$-smooth relative to $h$, it follows from \Cref{fac:relSmoothEqvi2} for $\bm x$ and $y_i=\overline{z}$ that
    \begin{align*}
        f(\bm x&+U_i(\overline{z}-x_i)) \leq f(\bm x)+\innprod{\nabla_i f(\bm x)}{\overline{z}-x_i}+L_i \D(\bm x+U_i(\overline{z}-x_i),\bm x)\\
        &\leq f(\bm x)+L_i \D(\bm x+U_i(\overline{z}-x_i),\bm x) +g_i(x_i)-g_i(\overline{z})-\tfrac{1}{\gamma_i} L_i \D(\bm x+U_i(\overline{z}-x_i),\bm x)\\
        &= f(\bm x)+g_i(x_i)-g_i(\overline{z})-\tfrac{1-\gamma_i L_i}{\gamma_i}  \D(\bm x +U_i(\overline{z}-x_i),\bm x),
    \end{align*}
    giving \eqref{eq:fiOverx}. 
\end{proof}

Recall that a function $\func{\vartheta}{\R^n\times\R^m}{\Rinf}$ with values $\vartheta(x,u)$ is \DEF{level-bounded} in $x$ locally uniformly in $u$ if for each $\bar u\in\R^m$ and $\alpha\in\R$ there is a neighborhood $\mathcal U$ of $\bar u$ along with a bounded set $B\subset\R^n$ such that $[\vartheta\leq \alpha]\subset B$ for all $u\in\mathcal U$, cf. \cite{rockafellar2011variational}. Using this definition, the fundamental properties of the mapping $\T$ are investigated in the subsequent result.

\begin{prop}[properties of Bregman proximal alternating linearized mapping]
\label{pro:proxPro} 
    Under conditions given in \Cref{ass:basic:fgh} for $i=1,\ldots,N$, the following statements are true:
    \begin{enumerate}
    \item \label{pro:proxPro3} 
    $\T(\bm x)$ is nonempty, compact, and outer semicontinuous (osc) for all $\bm x\in\interior\dom h$;
    \item \label{pro:proxPro1} 
    $\dom \T=\interior\dom h$; 
    \item \label{pro:proxPro2}
    If $\overline{z}\in\T(\bm x)$, then $\bm x+U_i(\overline{z}-x_i)\subseteq\dom \varphi \subseteq \interior\dom h$;
    \end{enumerate}\let\qedsymbol\relax
\end{prop}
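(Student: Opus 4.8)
The plan is to prove the three properties of $\T$ by reducing everything, via the substitution $z \mapsto \bm x + U_i(z - x_i)$, to the single-block Bregman forward--backward setting and then invoking standard parametric-minimization machinery from \cite{rockafellar2011variational}. Fix $i\in\set{1,\ldots,N}$ and $\bm x\in\interior\dom h$. Recalling \eqref{eq:tix}, the mapping $\T(\bm x)$ is the set of minimizers over $z\in\R^{n_i}$ of
\[
    \psi_{\bm x}^i(z) \coloneqq \innprod{\nabla_i f(\bm x)}{z-x_i} + \tfrac{1}{\gamma_i}\D(\bm x+U_i(z-x_i),\bm x) + g_i(z)
    = \innprod{\nabla_i f(\bm x)}{z-x_i} + \tfrac{1}{\gamma_i}\bigl(h_{\bm x}^i(z) - h_{\bm x}^i(x_i) - \innprod{\nabla_i h(\bm x)}{z-x_i}\bigr) + g_i(z).
\]

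For \emph{nonemptiness and compactness} of $\T(\bm x)$: since $f$ is $(L_1,\ldots,L_N)$-smooth relative to $h$ and $\gamma_i\in(0,\nicefrac{1}{L_i})$, the function $z\mapsto \tfrac{1}{\gamma_i}h_{\bm x}^i(z) - f(\bm x+U_i(z-x_i))$ is convex (as $\tfrac{1}{\gamma_i} > L_i$ and $L_i h_{\bm x}^i - (f\circ\,\cdot)$ is convex by \Cref{def:mbRelSmooth}, plus $(\tfrac{1}{\gamma_i}-L_i)h_{\bm x}^i$ is convex), which together with the $1$-coercivity of $h_{\bm x}^i$ forces $\psi_{\bm x}^i$ to be $1$-coercive \emph{after} adding $g_i$ — here I would use multi-block $h$-prox-boundedness (implicit in the well-posedness, guaranteed because $\varphi$ has a minimizer and the majorization model of the preceding remark is bounded below along the $i$-th block) in the form of \Cref{pro:proxBoundedness}\ref{pro:proxBoundedness3} to absorb $g_i$ without destroying coercivity. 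Since $\psi_{\bm x}^i$ is proper (its domain contains $x_i$, because $\bm x\in\dom\varphi\subseteq\interior\dom h$ ensures $h_{\bm x}^i(x_i) = h(\bm x) < \infty$ and $g_i(x_i)<\infty$), lsc, and level-bounded, its level sets are compact and nonempty, so $\argmin \psi_{\bm x}^i$ is nonempty and compact. This simultaneously proves \ref{pro:proxPro1}: $\dom\T = \interior\dom h$, since the argument works for every $\bm x\in\interior\dom h$ and fails for $\bm x\notin\interior\dom h$ because then $\D(\,\cdot\,,\bm x)\equiv\infty$.

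For \emph{outer semicontinuity}: I would view $(\bm x,z)\mapsto \psi_{\bm x}^i(z)$ as a parametric objective $\vartheta(z,\bm x)$ and check it is lsc jointly and level-bounded in $z$ locally uniformly in $\bm x$ on $\interior\dom h$; the latter follows by a uniform-coercivity estimate — on a small ball $\ball{\bm x}{\delta}\subseteq\interior\dom h$ the gradient $\nabla_i f$ and the linear term $\innprod{\nabla_i h(\bm x)}{\cdot}$ are uniformly bounded and $h_{\bm x}^i$ is uniformly $1$-coercive (using $h\in\C^1(\interior\dom h)$ and continuity of $\nabla h$). Then \cite[Theorem 1.17]{rockafellar2011variational} gives that $\argmin_z \vartheta(z,\bm x) = \T(\bm x)$ is osc in $\bm x$, completing \ref{pro:proxPro3}.

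For \ref{pro:proxPro2}: if $\overline z\in\T(\bm x)$, then $\psi_{\bm x}^i(\overline z) < \infty$, hence $g_i(\overline z)<\infty$ and $\D(\bm x+U_i(\overline z-x_i),\bm x)<\infty$, so $\bm x+U_i(\overline z - x_i)\in\dom h$ with $\bm x$-block components satisfying $g_j(x_j)<\infty$ for $j\ne i$; thus $\bm x+U_i(\overline z-x_i)\in\dom\varphi$, and by \Cref{ass:basic:argmin}, $\dom\varphi\subseteq\interior\dom h$. (One may upgrade $\dom h$ to $\interior\dom h$ directly via essential smoothness: a boundary point would force $\|\nabla_i h\|\to\infty$, contradicting finiteness of the model — but citing \Cref{ass:basic:argmin} is cleanest.)

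I expect the main obstacle to be establishing the \emph{level-boundedness of $\vartheta(z,\bm x)$ in $z$ locally uniformly in $\bm x$}, because the coercivity of $h_{\bm x}^i$ is only assumed pointwise in $\bm x$, not uniformly, and the linear perturbation $-\tfrac{1}{\gamma_i}\innprod{\nabla_i h(\bm x)}{z}$ varies with $\bm x$; one must argue that on a compact neighborhood of $\bm x$ inside $\interior\dom h$ these quantities stay bounded and the $1$-coercivity is inherited uniformly, which is where the essential smoothness and $\C^1$ regularity of $h$ do the real work.
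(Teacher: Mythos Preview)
Your plan is essentially the paper's: show the model objective is level-bounded in $z$ locally uniformly in the base point and invoke a parametric-minimization theorem (the paper cites \cite[Theorem~2.2 and Corollary~2.2]{kan2012moreau} rather than \cite[Theorem~1.17]{rockafellar2011variational}, and treats $\gamma_i$ as an additional parameter alongside $\bm x$, but the substance is the same). You correctly identify the crux---uniform level-boundedness---and the paper proves it by exactly the contradiction argument you anticipate, combining $h$-prox-boundedness (\Cref{pro:proxBoundedness}) with the $1$-coercivity of $h_{\bm x}^i$.

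Two corrections. First, your appeal to the relative smoothness of $f$ in the coercivity step is a red herring: $\psi_{\bm x}^i$ contains only the \emph{linearization} $\innprod{\nabla_i f(\bm x)}{z-x_i}$, not $f(\bm x+U_i(z-x_i))$ itself, so $\psi_{\bm x}^i-g_i=\tfrac{1}{\gamma_i}h_{\bm x}^i+\text{affine}$ is already $1$-coercive simply because $h_{\bm x}^i$ is; the only nontrivial ingredient is absorbing $g_i$ via $h$-prox-boundedness, as you say. (Relatedly, your properness argument assumes $\bm x\in\dom\varphi$, which is not part of the hypothesis $\bm x\in\interior\dom h$; one needs a different witness point.) Second, for \ref{pro:proxPro2} the paper does not argue from finiteness of $\psi_{\bm x}^i(\overline z)$ as you do: it invokes \Cref{lem:proxAltIneq} to obtain $\varphi(\bm x+U_i(\overline z-x_i))\leq\varphi(\bm x)<\infty$ directly, which places the updated point in $\dom\varphi$ in one stroke and sidesteps the $\dom h$ versus $\interior\dom h$ issue you flagged.
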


\begin{proof}
    For a fixed $\gamma_i^0\in(0,\gamma_{g_i}^h)$ and a vector $\bm x\in\interior\dom h$, let us define the function $\func{\Phi_i}{\R^{n_i}\times\R^{n}\times\R}{\Rinf}$ given by 
    \begin{equation*}
    \Phi_i(z,\bm x,\gamma_i):=g_i(z)+\innprod{\nabla_i f(\bm x)}{z-x_i}+
    \left\{
    \begin{array}{ll}
        \tfrac{1}{\gamma_i} \D(\bm x+U_i(z-x_i),\bm x)  &\quad \mathrm{if}~\gamma_i\in (0,\gamma_i^0],\vspace{1mm} \\
        0 & \quad \mathrm{if}~\gamma_i=0~\mathrm{and}~z=x_i,\vspace{1mm}\\
        +\infty & \quad \mathrm{otherwise}.
    \end{array}
    \right.
    \end{equation*} 
    Since $f$ and $g_i$ are proper and lsc, so is $\Phi_i$ on the set 
    $\set{(z,\bm x,\gamma_i)}[\|z-x_i\|\leq \mu \gamma_i,~ 0\leq\gamma_i\leq \gamma_i^0]$,
    for a constant $\mu>0$. We show that $\Phi_i$ is level-bounded in $z$ locally
    uniformly in $(\bm x,\gamma)$. If it is not, then there exists $\seq{\bm x^k}\subset\interior\dom h$, $\seq{z^k}$ with $\bm x^k+U_i(z^k-x_i^k)\subset\interior\dom h$, and $\seq{\gamma_i^k}\subset (0,\gamma_i^0]$
    such that $\Phi_i(z^k,\bm x^k,\gamma_i^k)\leq \beta<\infty$ with $(\bm x^k,\gamma^k)\to (\overline{\bm x},\overline{\gamma}_i)$ and
    $\|z^k\|\to\infty$. This guarantees that, for sufficiently large $k$, $z^k\neq x_i^k$, i.e., 
    $\gamma_i^k\in (0,\gamma_0]$ and 
    \begin{equation*}\label{eq:gzkBeta}
       g_i(z^k)+\innprod{\nabla_i f(\bm x^k)}{z^k-x_i^k}+\tfrac{1}{\gamma_i^k}\D_h(\bm x^k+U_i(z^k-x_i^k),\bm x^k)\leq\beta. 
    \end{equation*}
    Setting $\tilde \gamma_i\in (\gamma_i^0,\gamma_i^h)$, \Cref{pro:proxBoundedness2} ensures that there exists a constant $\tilde{\beta}\in\R$ such that  
    \[
    		g_i(z^k)+\tfrac{1}{\tilde{\gamma}_i} h(\bm x^k+U_i(z^k-x_i^k))\geq g_i(z^k)+r_i h(\bm x^k+U_i(z^k-x_i^k))\geq \tilde{\beta}
    \]
	Subtracting the last two inequalities, it holds that
    \begin{align*}
     \innprod{\nabla_i f(\bm x^k)}{z^k-x_i^k}
    +\tfrac{1}{\gamma_i^k}\D(\bm x^k+U_i(z^k-x_i^k),\bm x^k)
    -\tfrac{1}{\tilde{\gamma}_i} h(\bm x^k+U_i(z^k-x_i^k))
    \leq\beta-\tilde{\beta}.   
    \end{align*}
    Expanding $\D(\bm x^k+U_i(z^k-x_i^k),\bm x^k)$, dividing both sides by $\|z^k\|$, and taking
    limit from both sides of this inequality as $k\to\infty$, it can be deduced that
    \begin{align*}
     \lim_{k\to\infty} \left(
     \Innprod{\nabla_i f(\bm x^k)-\tfrac{1}{\gamma_i^k}\nabla_i h(\bm x^k)}{\frac{z^k-x_i^k}{\|z^k\|}}-\tfrac{1}{\gamma_i^k}\frac{h(\bm x^k)}{\|z^k\|}\right)
    +\left(\tfrac{1}{\gamma_i^k}-\tfrac{1}{\tilde{\gamma}_i}\right)\lim_{k\to\infty}\frac{h_{\bm x^k}^i(z^k)}{\|z^k\|} \leq \lim_{k\to\infty}\frac{\beta-\tilde{\beta}}{\|z^k\|}. 
    \end{align*}
    This leads to the contradiction $+\infty\leq 0$, which implies that $\Phi_i$ is
    level-bounded. Therefore, all assumptions of the parametric minimization theorem  
    \cite[Theorem~2.2 and Corollary~2.2]{kan2012moreau} are satisfied, i.e., \Cref{pro:proxPro1}. If $\overline{z}\in\T(\bm x)$, then \Cref{lem:proxAltIneq} implies that $\varphi(\bm x+U_i(\overline{z}-x_i))\leq \varphi(\bm x)<\infty$ for $i=1,\ldots,N$, i.e., 
    $\bm x+U_i(\overline{z}-x_i) \subseteq\dom \varphi$, the second inclusion follows from \Cref{ass:basic:argmin}.
\end{proof}
\begin{rem}[sum or product separable kernel] 
\label{rem:addProdSepKernel}
	Let us observe the following.
    \begin{enumerate}
        \item \label{rem:addProdSepKernel1}
            If $h$ is an \DEF{additive separable} function, i.e., $h(x_1,\ldots,x_N)=h_1(x_1)+\ldots+h_N(x_N)$, then \eqref{eq:tx} can be written in the form
            \begin{align*}
                \T(\bm x)&=\argmin_{z\in\R^{n_i}} \set{g_i(z)+\innprod{\nabla_i f(\bm x)}{z-x_i} +\tfrac{1}{\gamma_i}\D(\bm x+U_i(z-x_i),\bm x)}\\
                &=\argmin_{z\in\R^{n_i}} \set{g_i(z) +\tfrac{1}{\gamma_i}(h_i(z)-h_i(x_i)-\innprod{\nabla h_i(x_i)-\gamma_i \nabla_i f(\bm x)}{z-x_i})}\\
                &=\argmin_{z\in\R^{n_i}} \set{g_i(z) +\tfrac{1}{\gamma_i} \D[h_i](z,\nabla h_i^\star(\nabla h_i(x_i)-\gamma_i \nabla_i f(\bm x))}\\
                &=\prox_{\gamma_i g_i}^{h_i}(\nabla h_i^\star(\nabla h_i(x_i)-\gamma_i \nabla_i f(\bm x))).
            \end{align*}
        \item \label{rem:addProdSepKernel2}
            If $h$ is \DEF{product separable}, i.e., $h(x_1,\ldots,x_N)=h_1(x_1)\times\ldots\times h_N(x_N)$, then
            \begin{align*}
                \T(\bm x)&=\argmin_{z\in\R^{n_i}} \set{g_i(z)+\innprod{\nabla_i f(\bm x)}{z-x_i} +\tfrac{1}{\gamma_i}\D(\bm x+U_i(z-x_i),\bm x)}\\
                &=\argmin_{z\in\R^{n_i}} \set{g_i(z) +\tfrac{\eta_{\bm x}^i}{\gamma_i}(h_i(z)-h_i(x_i)-\innprod{\nabla h_i(x_i)-\tfrac{\gamma_i}{\eta_{\bm x}^i} \nabla_i f(\bm x)}{z-x_i})}\\
                &=\argmin_{z\in\R^{n_i}} \set{g_i(z) +\tfrac{1}{\mu_i} \D[h_i](z,\nabla h_i^\star(\nabla h_i(x_i)-\mu_i \nabla_i f(\bm x))}\\
                &=\prox_{\mu_i g_i}^{h_i}(\nabla h_i^\star(\nabla h_i(x_i)-\mu_i \nabla_i f(\bm x))),
            \end{align*}
            where $\mu_i:=\nicefrac{\gamma_i}{\eta_{\bm x}^i}$ and $\eta_{\bm x}^i:= \prod_{j\neq i} h_j(x_j)\neq 0$.
    \end{enumerate}
\end{rem}

			
	\section{Multi-block Bregman proximal alternating linearized minimization} \label{sec:convAnalysis}
		
We here introduce a multi-block proximal alternating linearized minimization algorithm and investigate its subsequential and global convergence, along with its convergence rate.

For a given point $\bm x^k=(x_1^k,\ldots,x_N^k)$, we set
\begin{align*}
    \bm x^{k,i}:=(x_1^{k+1},\ldots,x_i^{k+1},x_{i+1}^k,\ldots,x_N^k),
\end{align*}
i.e., $\bm x^{k,0}=\bm x^k$ and $\bm x^{k,N}=\bm x^{k+1}$.
Using this notation and \eqref{eq:tx}, we next introduce the multi-block Bregman proximal alternating linearized minimization (\refBPALM[]) algorithm.

\begin{algorithm*}[ht] 
\algcaption{({\bf BPALM}) Bregman Proximal Alternating Linearized Minimization}%
\begin{algorithmic}[1]
\Require{%
	\(\gamma_i\in(0,\nicefrac{1}{L_i}),~ i=1,\ldots,N\),~
	\(\bm x^0\in\R^{n_1}\times\ldots\times\R^{n_N}\),\ $I_n=(U_1,\ldots,U_N)\in\R^{n\times n}$ with $U_i\in\R^{n\times n_i}$ and the identity matrix $I_n$.%
}%
\Initialize{$k=0$.}%
\While{some stopping criterion is not met}
\State\label{state:bpalmm0Xk1}%
    $\bm x^{k,0}=\bm x^k$;
    \For{\texttt{$i=1,\ldots,N$}} compute
        \begin{align}\label{eq:xik1}
            x_i^{k,i}\in \T(\bm x^{k,i-1}),\quad \bm x^{k,i}=\bm x^{k,i-1}+U_i(x_i^{k,i}-x_i^{k,i-1});
        \end{align}
    \EndFor
\State\label{state:bpalmk1}%
	 $\bm x^{k+1}=\bm x^{k,N}$,~$k= k+1$;%
\EndWhile
\Ensure{%
	A vector $\bm x^k$.%
}%
\end{algorithmic}
\label{alg:bpalm}
\end{algorithm*}%

We note that each iteration of \refBPALM[] requires one call of the first-order oracle for the information needed in \eqref{eq:xik1}, and the iteration \eqref{eq:xik1} are well-defined by \Cref{pro:proxPro}. In addition, notice that if $N=1$, this algorithm reduces to the common (Bregman) proximal gradient (forward-backward) method  \cite{bauschke2016descent,beck2009fast,bolte2018first}; if $N=2$, $h,f\in\mathcal{C}^2$, and $g_1=g_2= 0$, then it reduces to B-PALM \cite{li2019provable}; if $N=2$ and $h(\bm x)=\tfrac{1}{2}(\|x_1\|^2+\|x_2\|^2)$, it reduces to PALM \cite{bolte2014proximal}; if $h(\bm x)=\tfrac{1}{2}\sum_{i=1}^N \|x_i\|^2$, then this algorithm is reduced to C-PALM \cite{shefi2016rate}. 

We begin with showing some basic properties of the sequence generated by \refBPALM[], involving a \DEF{sufficient decrease condition}.

\begin{prop}[sufficient decrease condition]
\label{pro:suffDecreaseIneq}
    Let \Cref{ass:basic:fgh} hold, and let $\seq{\bm x^k}$ be generated by \refBPALM[]. Then, the following statements are true:
    \begin{enumerate}
        \item \label{pro:suffDecreaseIneq1}
            the sequence $\seq{\varphi(\bm x^k)}$ is nonincreasing and
            \begin{equation}\label{eq:dh1dh2Ineq2}
                \rho \sum_{i=1}^N \D(\bm x^{k,i},\bm x^{k,i-1})\leq \varphi(\bm x^k)-\varphi(\bm x^{k+1}),
            \end{equation}
            where $\rho:=\min\set{\tfrac{1-\gamma_1 L_1}{\gamma_1},\ldots,\tfrac{1-\gamma_N L_N}{\gamma_N}}$;
        \item \label{pro:suffDecreaseIneq2}
            we have
            \begin{equation}\label{eq:dh1dh2Ineq3}
                \sum_{k=1}^\infty \sum_{i=1}^N\D(\bm x^{k,i},\bm x^{k,i-1}) < \infty,
            \end{equation}
            i.e., $\lim_{k\to\infty} \D(\bm x^{k,i},\bm x^{k,i-1})=0$ for $i=1,\ldots,N$.
    \end{enumerate}
\end{prop}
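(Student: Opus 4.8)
The plan is to derive a per-block decrease of $\varphi$ from \Cref{lem:proxAltIneq}, telescope it over the inner loop $i=1,\dots,N$ to obtain \eqref{eq:dh1dh2Ineq2}, and then sum over $k$ using lower boundedness of $\varphi$ to get the summability \eqref{eq:dh1dh2Ineq3}. Before doing so I would record that the whole scheme is well-posed: by \Cref{pro:proxPro}, if $\bm x^{k,i-1}\in\interior\dom h$ then $\T(\bm x^{k,i-1})\neq\emptyset$ and the updated point $\bm x^{k,i}=\bm x^{k,i-1}+U_i(x_i^{k,i}-x_i^{k,i-1})$ lies in $\dom\varphi\subseteq\interior\dom h$, so an induction over the pairs $(k,i)$ (started from $\bm x^0\in\dom\varphi$) shows every iterate $\bm x^{k,i}$ belongs to $\interior\dom h$. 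In particular each $\D(\bm x^{k,i},\bm x^{k,i-1})$ is finite, and it is nonnegative because $\bm x^{k,i}$ and $\bm x^{k,i-1}$ differ only in the $i$-th block and $h$ is multi-block convex (cf.\ the discussion following \Cref{def:BregDist}).

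Next, for each $k$ and each $i\in\set{1,\dots,N}$ I would invoke \Cref{lem:proxAltIneq} with $\bm x=\bm x^{k,i-1}$ and $\overline z=x_i^{k,i}\in\T(\bm x^{k,i-1})$ — legitimate since $\gamma_i\in(0,\nicefrac1{L_i})$ — which gives
\[
  \varphi(\bm x^{k,i})\leq\varphi(\bm x^{k,i-1})-\tfrac{1-\gamma_iL_i}{\gamma_i}\D(\bm x^{k,i},\bm x^{k,i-1}).
\]
Summing these $N$ inequalities, the terms $\varphi(\bm x^{k,1}),\dots,\varphi(\bm x^{k,N-1})$ cancel telescopically, and using $\bm x^{k,0}=\bm x^k$, $\bm x^{k,N}=\bm x^{k+1}$, together with $\tfrac{1-\gamma_iL_i}{\gamma_i}\geq\rho$ and $\D(\bm x^{k,i},\bm x^{k,i-1})\geq 0$, one obtains
\[
  \varphi(\bm x^{k+1})\leq\varphi(\bm x^k)-\rho\sum_{i=1}^N\D(\bm x^{k,i},\bm x^{k,i-1}),
\]
which is \eqref{eq:dh1dh2Ineq2}. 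Since each $\gamma_i<\nicefrac1{L_i}$ forces $\rho>0$ and the Bregman terms are nonnegative, this already shows $\seq{\varphi(\bm x^k)}$ is nonincreasing, proving \Cref{pro:suffDecreaseIneq1}.

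For \Cref{pro:suffDecreaseIneq2} I would sum \eqref{eq:dh1dh2Ineq2} over $k=1,\dots,K$ to get
\[
  \rho\sum_{k=1}^{K}\sum_{i=1}^N\D(\bm x^{k,i},\bm x^{k,i-1})\leq\varphi(\bm x^1)-\varphi(\bm x^{K+1}).
\]
By \Cref{ass:basic:argmin}, $\varphi$ is proper with $\argmin\varphi\neq\emptyset$, hence $\inf\varphi$ is finite, so the right-hand side is bounded above by $\varphi(\bm x^1)-\inf\varphi<\infty$ uniformly in $K$; letting $K\to\infty$ yields \eqref{eq:dh1dh2Ineq3}, and $\lim_{k\to\infty}\D(\bm x^{k,i},\bm x^{k,i-1})=0$ for each $i$ follows at once from convergence of the series. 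I do not expect a genuine obstacle here; the only points needing some care are (i) justifying feasibility of the inner iterates via \Cref{pro:proxPro} so that \Cref{lem:proxAltIneq} applies and the Bregman distances are finite and nonnegative, and (ii) extracting from \Cref{ass:basic:argmin} exactly the consequence used, namely that $\varphi$ is bounded below, to pass from the bounded partial sums to the convergent series.
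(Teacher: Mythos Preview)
Your proposal is correct and follows essentially the same approach as the paper: apply \Cref{lem:proxAltIneq} at each inner step, telescope over $i=1,\dots,N$ to get \eqref{eq:dh1dh2Ineq2}, then sum over $k$ and use lower boundedness of $\varphi$ from \Cref{ass:basic:argmin} to obtain \eqref{eq:dh1dh2Ineq3}. Your additional care in justifying well-posedness of the iterates via \Cref{pro:proxPro} and in noting $\rho>0$ is welcome but not a departure from the paper's argument.
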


\begin{proof}
    Plugging  $\overline{z}=x_i^{k,i}$ and $\bm x=\bm x^{k,i-1}$ into \Cref{lem:proxAltIneq}, it holds that
    \begin{equation}\label{eq:fiOverx1}
        \varphi(\bm x^{k,i})\leq\varphi(\bm x^{k,i-1})-\tfrac{1-\gamma_i L_i}{\gamma_i}\D(\bm x^{k,i},\bm x^{k,i-1}).
    \end{equation}
    Summing up both sides of \eqref{eq:fiOverx1} from $i=1$ to $N$, it follows that
    \begin{align*}
        \sum_{i=1}^N \tfrac{1-\gamma_i L_i}{\gamma_i} \D(\bm x^{k,i},\bm x^{k,i-1})&\leq \sum_{i=1}^N [\varphi(\bm x^{k,i-1})-\varphi(\bm x^{k,i})]
        =\varphi(\bm x^k)-\varphi(\bm x^{k+1}),
    \end{align*}
    giving \eqref{eq:dh1dh2Ineq2}. Let us sum up both sides  of \eqref{eq:dh1dh2Ineq2} from $k=0$ to $q$:
    \begin{align*}
        \rho\sum_{k=0}^{q} \sum_{i=1}^N  \D(\bm x^{k,i},\bm x^{k,i-1}) &\leq 
        \sum_{k=0}^{q} \varphi(\bm x^k)-\varphi(\bm x^{k+1})=\varphi(\bm x^{0})-\varphi(\bm x^{q+1})
        \leq\varphi(\bm x^0)-\underline{\varphi}
        < \infty.
    \end{align*}
    Taking the limit as $q\to+\infty$, \eqref{eq:dh1dh2Ineq3} holds true. Together with  $\D_h(\cdot,\cdot)\geq 0$, this  proves the claim.
\end{proof}

Let us consider the condition
\begin{equation}\label{eq:stopCrit}
    \sum_{i=1}^N \D(\bm x^{k,i},\bm x^{k,i-1})\leq \varepsilon
\end{equation}
as a \DEF{stopping criterion}, for the accuracy parameter $\varepsilon>0$. Then, the first main consequence of \Cref{pro:suffDecreaseIneq} will provide us the \DEF{iteration complexity} of \refBPALM[], which is the number of iterations needed for the stopping criterion \eqref{eq:stopCrit} to be satisfied. 

\begin{cor}[iteration complexity]
\label{cor:iterComplexity}
    Let \Cref{ass:basic:fgh} hold, and let $\seq{\bm x^k}$ be generated by \refBPALM[], and let \eqref{eq:stopCrit} be the stopping criterion. Then, \refBPALM[] will be terminated within $k\leq1+ \tfrac{\varphi(x^0)-\inf \varphi}{\rho \varepsilon}$ iterations.
\end{cor}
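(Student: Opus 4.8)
The plan is to derive the complexity bound directly from the sufficient decrease estimate \eqref{eq:dh1dh2Ineq2} together with nonnegativity of the Bregman distance. Concretely, suppose for contradiction that the stopping criterion \eqref{eq:stopCrit} has \emph{not} been met at iterations $k=0,1,\ldots,q$; that is, $\sum_{i=1}^N\D(\bm x^{k,i},\bm x^{k,i-1})>\varepsilon$ for every such $k$. Summing \eqref{eq:dh1dh2Ineq2} over $k=0,\ldots,q$ and telescoping the right-hand side, I get
\[
    \rho(q+1)\varepsilon
    < \rho\sum_{k=0}^{q}\sum_{i=1}^N\D(\bm x^{k,i},\bm x^{k,i-1})
    \leq \varphi(\bm x^0)-\varphi(\bm x^{q+1})
    \leq \varphi(\bm x^0)-\inf\varphi,
\]
where the last inequality uses $\varphi(\bm x^{q+1})\geq\inf\varphi>-\infty$ (finiteness of $\inf\varphi$ follows from \Cref{ass:basic:argmin}, since $\argmin\varphi\neq\emptyset$).

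Rearranging gives $q+1<\tfrac{\varphi(\bm x^0)-\inf\varphi}{\rho\varepsilon}$, i.e. $q<\tfrac{\varphi(\bm x^0)-\inf\varphi}{\rho\varepsilon}$. Contrapositively, as soon as $k\geq\tfrac{\varphi(\bm x^0)-\inf\varphi}{\rho\varepsilon}$, the criterion \eqref{eq:stopCrit} must hold at some iteration index no larger than $k$; hence the algorithm terminates within $k\leq 1+\tfrac{\varphi(\bm x^0)-\inf\varphi}{\rho\varepsilon}$ iterations, which is exactly the claimed bound (the additive $1$ absorbing the index shift between ``iterations performed'' and the last index at which the test can still fail). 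One should also note $\rho>0$ because $\gamma_i\in(0,\nicefrac1{L_i})$ forces each $\tfrac{1-\gamma_iL_i}{\gamma_i}>0$, so the division is legitimate.

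There is essentially no obstacle here: the statement is a routine corollary, and the only things to be careful about are (i) invoking the right lower bound $\inf\varphi>-\infty$ from the standing assumptions rather than from anything proved ad hoc, (ii) getting the off-by-one bookkeeping right between the number of completed iterations and the largest index at which \eqref{eq:stopCrit} can be violated, and (iii) stating the strict-versus-nonstrict inequalities consistently so that the final bound comes out as $k\leq 1+\tfrac{\varphi(\bm x^0)-\inf\varphi}{\rho\varepsilon}$ rather than a slightly different constant. The core mechanism—telescoping a summable nonnegative decrease and lower-bounding each not-yet-terminated term by $\varepsilon$—is entirely standard.
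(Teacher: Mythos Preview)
Your proof is correct and follows essentially the same route as the paper's: sum the sufficient decrease inequality \eqref{eq:dh1dh2Ineq2} over the iterations at which \eqref{eq:stopCrit} fails, telescope the right-hand side, bound below by $\rho\varepsilon$ per iteration and above by $\varphi(\bm x^0)-\inf\varphi$, then read off the iteration bound. Your added remarks on $\rho>0$ and $\inf\varphi>-\infty$ are helpful clarifications but do not change the argument.
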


\begin{proof}
    Summing both sides of \eqref{eq:dh1dh2Ineq2} over the first $\mathcal{K}\in\N$ iterations and telescoping the right hand side, it holds that
    \[
        \rho \sum_{k=0}^{\mathcal{K}-1} \sum_{i=1}^N \D(\bm x^{k,i},\bm x^{k,i-1})\leq \sum_{k=0}^{\mathcal{K}-1} \left(\varphi(\bm x^k)-\varphi(\bm x^{k+1})\right)=\varphi(\bm x^0)-\varphi(\bm x^\mathcal{K}) \leq \varphi(\bm x^0)-\inf \varphi.
    \]
    Assuming that for all $(\mathcal{K}-1)$-th iterations the stopping criterion \eqref{eq:stopCrit} is not satisfied, i.e., $\sum_{i=1}^N \D(\bm x^{k,i},\bm x^{k,i-1})>\varepsilon$, which leads to $\mathcal{K}\leq 1+\tfrac{\varphi(x^0)-\inf \varphi}{\rho \varepsilon}$, giving the desired  result.
\end{proof}

In order to show the subsequential convergence of the sequence $\seq{\bm x^k}$ generated by \refBPALM[], the next proposition will provide a lower bound for iterations gap $\|\bm x^{k+1}-\bm x^k\|$ using the subdifferential of $\partial \varphi(\bm x^{k+1})$.

\begin{prop}[subgradient lower bound for iterations gap]
\label{pro:subgradLowBound1}
    Let \Cref{ass:basic:fgh} hold, and let $\seq{\bm x^k}$ be generated by \refBPALM[] that we assume to be bounded. For a fixed $k\in\N$, we define
    \begin{equation}\label{eq:pxkp1}
        \mathcal{G}_i^{k+1}:= \tfrac{1}{\gamma_i} (\nabla_i h(\bm x^{k,i-1})-\nabla_i h(\bm x^{k,i})) +\nabla_i f(\bm x^{k,i})-\nabla_i f(\bm x^{k,i-1})\quad  i=1,\ldots,N.
    \end{equation}
    Then, $\left(\mathcal{G}_1^{k+1},\ldots,\mathcal{G}_N^{k+1}\right)\in \partial\varphi(\bm x^{k+1})$ and
    \begin{equation}\label{eq:subGradUpBound}
        \|(\mathcal{G}_1^{k+1},\ldots,\mathcal{G}_N^{k+1})\|\leq \overline c \sum_{i=1}^N \|x_i^{k+1}-x_i^k\|,
    \end{equation}
    with $\overline c:=\max \set{\tfrac{\widetilde L+ \gamma_1\widehat L}{\gamma_1},\ldots,\tfrac{\widetilde L+ \gamma_{N-1}\widehat L}{\gamma{N-1}},\tfrac{\widetilde L_{N}+ \gamma_N \widehat L_{N}}{\gamma_N}}$ in which $\widehat L$ and $\widetilde L>0$ are Lipschitz moduli of $\nabla_i f$, $\nabla_i h$ ($i=1,\ldots,N-1$) on bounded sets, and $\widehat L_N$ and $\widetilde L_N>0$ are Lipschitz moduli of $\nabla_N f$, $\nabla_N h$ on bounded sets, respectively.
\end{prop}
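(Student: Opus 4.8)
\medskip
\noindent\textbf{Proof idea.}
I would prove the subgradient inclusion first and then the quantitative estimate \eqref{eq:subGradUpBound}.

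\emph{Step 1 (the inclusion).} Fix $i\in\{1,\dots,N\}$. By the simplified form \eqref{eq:tix} of the update, $x_i^{k+1}=x_i^{k,i}$ minimizes over $z\in\R^{n_i}$ the map $z\mapsto g_i(z)+\innprod{\nabla_i f(\bm x^{k,i-1})}{z-x_i^{k,i-1}}+\tfrac1{\gamma_i}\D(\bm x^{k,i-1}+U_i(z-x_i^{k,i-1}),\bm x^{k,i-1})$, whose last two summands are real valued and $\C^1$ near $x_i^{k+1}$ because $\bm x^{k,i}\in\interior\dom h$ (\Cref{pro:proxPro}). Fermat's rule and the sum rule for the limiting subdifferential then give
\[
    0\in\partial g_i(x_i^{k+1})+\nabla_i f(\bm x^{k,i-1})+\tfrac1{\gamma_i}\bigl(\nabla_i h(\bm x^{k,i})-\nabla_i h(\bm x^{k,i-1})\bigr),
\]
where the last term is the $z$-gradient of the Bregman summand at $z=x_i^{k+1}$, read off from \eqref{eq:bregman} using $\bm x^{k,i}=\bm x^{k,i-1}+U_i(x_i^{k+1}-x_i^{k})$. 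Since $g=\sum_j g_j$ is block separable and $f\in\C^1$, the limiting subdifferential of $\varphi$ splits blockwise, $\partial\varphi(\bm x^{k+1})=\prod_{j=1}^N\bigl(\nabla_j f(\bm x^{k+1})+\partial g_j(x_j^{k+1})\bigr)$; hence, rearranging the displayed inclusion and adding back the appropriate $f$-gradient, the quantity $\mathcal G_i^{k+1}$ of \eqref{eq:pxkp1} is recognized as the $i$-th component of a vector in $\partial\varphi(\bm x^{k+1})$ (for the last block $i=N$ this is immediate since $\bm x^{k,N}=\bm x^{k+1}$).

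\emph{Step 2 (the estimate).} Boundedness of $\seq{\bm x^k}$ together with \Cref{pro:proxPro}, which puts every partial iterate in $\dom\varphi\subseteq\interior\dom h$, confines all the $\bm x^{k,i}$ to a single bounded set $B\subseteq\interior\dom h$ on which, as recorded in the statement, $\nabla_i f$ and $\nabla_i h$ are Lipschitz (with moduli $\widehat L,\widetilde L$ for $i<N$ and $\widehat L_N,\widetilde L_N$ for $i=N$). The key observation is that consecutive partial iterates differ in a single block,
\[
    \bm x^{k,i}-\bm x^{k,i-1}=U_i\bigl(x_i^{k+1}-x_i^{k}\bigr),\qquad\text{hence}\qquad\bigl\|\bm x^{k,i}-\bm x^{k,i-1}\bigr\|=\bigl\|x_i^{k+1}-x_i^{k}\bigr\|.
\]
Applying the triangle inequality to \eqref{eq:pxkp1} and these Lipschitz bounds gives $\|\mathcal G_i^{k+1}\|\le\tfrac1{\gamma_i}\|\nabla_i h(\bm x^{k,i-1})-\nabla_i h(\bm x^{k,i})\|+\|\nabla_i f(\bm x^{k,i})-\nabla_i f(\bm x^{k,i-1})\|\le\tfrac{\widetilde L+\gamma_i\widehat L}{\gamma_i}\|x_i^{k+1}-x_i^{k}\|$ for $i<N$, and analogously with $\widetilde L_N,\widehat L_N$ for $i=N$. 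Summing over $i$, using $\|(\mathcal G_1^{k+1},\dots,\mathcal G_N^{k+1})\|\le\sum_{i=1}^N\|\mathcal G_i^{k+1}\|$, and reading off $\overline c$ yields \eqref{eq:subGradUpBound}.

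\emph{Where the work is.} The analytic ingredients are light --- Fermat's rule, the sum rule, and Lipschitz continuity of the gradients on a bounded set; the care is in the bookkeeping. One has to (i) differentiate the partial Bregman term correctly, which yields $\nabla_i h(\bm x^{k,i})-\nabla_i h(\bm x^{k,i-1})$ rather than a full-space object; (ii) keep track of exactly which two partial iterates enter each gradient difference so that everything collapses to the increments $x_i^{k+1}-x_i^{k}$, which is precisely where the sweep identity $\bm x^{k,i}=\bm x^{k,i-1}+U_i(x_i^{k+1}-x_i^{k})$ and the distinguished role of the last block ($\bm x^{k,N}=\bm x^{k+1}$) are used; and (iii) justify the Lipschitz estimates on a bounded region, which is exactly why the boundedness of $\seq{\bm x^k}$ cannot be dispensed with, since $\nabla_i f$ and $\nabla_i h$ need not be globally Lipschitz.
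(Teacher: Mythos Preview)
Your proposal is correct and follows essentially the same route as the paper: Fermat's rule on each block subproblem yields the optimality condition, the block-separable structure of $\partial\varphi$ (via \cite[Proposition~2.1]{attouch2010proximal} in the paper) assembles these into the claimed inclusion, and Lipschitz continuity of $\nabla_i f$ and $\nabla_i h$ on the bounded set containing all partial iterates gives the norm estimate. The only cosmetic difference is that your Step~2 bounds each $\|\mathcal G_i^{k+1}\|$ by the single increment $\|x_i^{k+1}-x_i^k\|$, whereas the paper bounds it (more loosely) by the full sum $\sum_j\|x_j^{k+1}-x_j^k\|$; both deliver \eqref{eq:subGradUpBound} after extracting the maximal coefficient.
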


\begin{proof}
    The optimality conditions for \eqref{eq:xik1} ensures that there exists $q_i^{k,i}\in\partial g_i(x_i^{k,i})$ such that
    \[
        \nabla_i f(\bm x^{k,i-1})+\tfrac{1}{\gamma_i} \left(\nabla_i h(\bm x^{k,i})-\nabla_i h(\bm x^{k,i-1})\right)+q^{k,i}=0 \quad i=1,\ldots,N,
    \]
    leading to
    \begin{equation}\label{eq:gradEq11}
        q^{k,i}=\tfrac{1}{\gamma_i} \left(\nabla_i h(\bm x^{k,i-1})-\nabla_i h(\bm x^{k,i}))\right)-\nabla_i f(\bm x^{k,i-1})\quad i=1,\ldots,N.
    \end{equation}
    On the other hand, owing to \cite[Proposition 2.1]{attouch2010proximal}, the subdifferential of $\varphi$ is given by
    \[
        \partial \varphi(\bm x)=(\partial_{1} \varphi(\bm x),\ldots,\partial_{N} \varphi(\bm x))
        =(\nabla_1 f(\bm x)+\partial g_1(x_1),\ldots,\nabla_N f(\bm x)+\partial g_N(x_N)),
    \]
    i.e., for $\bm x=\bm x^{k+1}$,
    \[
        \nabla_i f(\bm x^{k+1})+\partial g_i(x_i^{k+1})\in \partial_{i} \varphi(\bm x^{k+1})\quad i=1,\ldots,N,
    \]
    which means $(\mathcal{G}_1^{k+1},\ldots,\mathcal{G}_N^{k+1})\in \partial \varphi(\bm x^{k+1})$. It follows from the Lipschitz continuity of $\nabla_i f$, $\nabla_i h$ on bounded sets and the assumption of $\seq{\bm x^k}$ being bounded that there exist $\widehat L$, $\widehat L_N$, $\widetilde L>0$ and $\widetilde L_N>0$ such that
    \begin{align*}
        \|\mathcal{G}_i^{k+1}\| \leq \tfrac{1}{\gamma_i} \|\nabla_i h(\bm x^{k,i-1})-\nabla_i h(\bm x^{k,i})\| +\|\nabla_i f(\bm x^{k,i})-\nabla_i f(\bm x^{k,i-1})\|\leq \tfrac{\widetilde L+ \gamma_i\widehat L}{\gamma_i} \sum_{i=1}^{N}\|x_i^{k+1}-x_i^k)\|,
    \end{align*}
    for $i=1,\ldots,N-1$, and
    \begin{align*}
        \|\mathcal{G}_N^{k+1}\| \leq \tfrac{1}{\gamma_N} \|\nabla_N h(\bm x^{k,N-1})-\nabla_N h(\bm x^{k,N})\| +\|\nabla_N f(\bm x^{k+1})-\nabla_{N} f(\bm x^{k,N-1})\|
        \leq \tfrac{\widetilde L_N+ \gamma_N \widehat L_N}{\gamma_N} \|x_N^k-x_N^{k+1}\|.
    \end{align*}
    Invoking the last two inequalities, it can be concluded that
    \begin{align*}
        \|(\mathcal{G}_1^{k+1},\ldots,\mathcal{G}_N^{k+1})\|
        \leq \max \set{\tfrac{\widetilde L+ \gamma_1\widehat L}{\gamma_1},\ldots,\tfrac{\widetilde L+ \gamma_{N-1}\widehat L}{\gamma{N-1}},\tfrac{\widetilde L_{N}+ \gamma_N \widehat L_{N}}{\gamma_N}} \sum_{i=1}^{N}\|x_i^{k+1}-x_i^k)\|,
    \end{align*}
    as claimed.
\end{proof}

Next, we proceed to derive the \DEF{subsequential convergence} of the sequence $\seq{\bm x^k}$ generated by \refBPALM[]: every cluster point of $\seq{\bm x^k}$ is a critical point of $\varphi$. Further, we explain some fundamental properties of the set of all cluster points $\omega(\bm x^0)$ of the sequence $\seq{\bm x^k}$. 

\begin{thm}[subsequential convergence and properties of $\omega(\bm x^0)$]
\label{pro:chatClusterPoint10}
    Let \Cref{ass:basic:fgh} hold, let the kernel $h$ be locally multi-block strongly convex, and let $\seq{\bm x^k}$ be generated by \refBPALM[] that we assume to be bounded. Then the following statements are true:
    \begin{enumerate}
        \item \label{pro:chatClusterPoint11}
            $\emptyset\neq \omega(\bm x^0)\subset \mathbf{crit}~ \varphi$;
        \item \label{pro:chatClusterPoint12}
            $\lim_{k\to\infty}\dist\left(\bm x^k,\omega(\bm x^0)\right)=0$;
        \item \label{pro:chatClusterPoint13}
        $\omega(\bm x^0)$ is a nonempty, compact, and connected set;
        \item \label{pro:chatClusterPoint14}
        the objective function $\varphi$ is finite and constant on $\omega(\bm x^0)$.
    \end{enumerate}
\end{thm}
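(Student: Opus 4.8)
The plan is to run the standard ``sufficient decrease $+$ relative error $+$ closedness of $\partial\varphi$'' scheme, adapted to the Bregman, multi-block setting. Throughout fix a cluster point $\bm x^\star=(x_1^\star,\dots,x_N^\star)\in\omega(\bm x^0)$ and a subsequence $\bm x^{k_j}\to\bm x^\star$; such a point exists since $\seq{\bm x^k}$ is bounded, which already gives $\omega(\bm x^0)\neq\emptyset$. As $\seq{\varphi(\bm x^k)}$ is nonincreasing (\Cref{pro:suffDecreaseIneq}) and bounded below by $\inf\varphi>-\infty$ (\Cref{ass:basic:argmin}), it converges to a finite value $\varphi^\star\coloneqq\lim_k\varphi(\bm x^k)=\inf_k\varphi(\bm x^k)$, and by lower semicontinuity $\varphi(\bm x^\star)\le\liminf_j\varphi(\bm x^{k_j})=\varphi^\star<\infty$, so $\bm x^\star\in\dom\varphi\subseteq\interior\dom h$; this lets us use continuity of $f$, $\nabla h$ and of $\D$ near $\bm x^\star$.

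\emph{Step 1 (the increments vanish).} \Cref{pro:suffDecreaseIneq} gives $\D(\bm x^{k,i},\bm x^{k,i-1})\to0$ for every $i$. I would upgrade this to $\|x_i^{k,i}-x_i^{k,i-1}\|\to0$ — hence $\|\bm x^{k+1}-\bm x^k\|\to0$ and $\bm x^{k,i}-\bm x^k\to0$ for all $i$ — by contradiction: if $\|x_i^{k,i}-x_i^{k,i-1}\|\ge\varepsilon$ along a subsequence, boundedness of $\seq{\bm x^{k,i-1}}$ and $\seq{\bm x^{k,i}}$ (both lie in $[\varphi\le\varphi(\bm x^0)]\cap\dom\varphi$ by \Cref{lem:proxAltIneq} and \Cref{pro:proxPro2}) lets one extract a further subsequence with $\bm x^{k,i-1}\to\bar{\bm x}$ and $\bm x^{k,i}\to\bar{\bm z}=\bar{\bm x}+U_i(\bar z_i-\bar x_i)$, where $\|\bar z_i-\bar x_i\|\ge\varepsilon$; both limits lie in $\interior\dom h$ by lower semicontinuity of $\varphi$, so joint continuity of $\D$ on $\interior\dom h\times\interior\dom h$ (as $h\in\C^1(\interior\dom h)$) forces $\D(\bar{\bm z},\bar{\bm x})=0$, contradicting multi-block strict convexity of $h$ (here local strong convexity of $h$ could alternatively be used to derive the quantitative bound $\D(\bm x^{k,i},\bm x^{k,i-1})\ge\tfrac{\sigma_h^i}{2}\|x_i^{k,i}-x_i^{k,i-1}\|^2$).

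\emph{Step 2 (function values converge along the subsequence).} From $x_i^{k,i}\in\T(\bm x^{k,i-1})$, testing the defining minimization in \eqref{eq:tx} with $z=x_i^\star$ gives
\begin{align*}
\innprod{\nabla_i f(\bm x^{k,i-1})}{x_i^{k,i}-x_i^{k,i-1}}&+\tfrac1{\gamma_i}\D(\bm x^{k,i},\bm x^{k,i-1})+g_i(x_i^{k,i})\\
&\le\innprod{\nabla_i f(\bm x^{k,i-1})}{x_i^\star-x_i^{k,i-1}}+\tfrac1{\gamma_i}\D(\bm x^{k,i-1}+U_i(x_i^\star-x_i^{k,i-1}),\bm x^{k,i-1})+g_i(x_i^\star).
\end{align*}
Along $k_j$, Step 1 gives $\bm x^{k_j,i-1}\to\bm x^\star$ and $x_i^{k_j,i}=x_i^{k_j+1}\to x_i^\star$; continuity of $\nabla f$ on $\interior\dom h$ sends the two inner products to $0$, and continuity of $\D$ together with $\D(\bm x^\star,\bm x^\star)=0$ and $\D\ge0$ sends the two Bregman terms to $0$, whence $\limsup_j g_i(x_i^{k_j+1})\le g_i(x_i^\star)$; combined with lower semicontinuity of $g_i$ this yields $g_i(x_i^{k_j+1})\to g_i(x_i^\star)$ for each $i$. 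With continuity of $f$ we conclude $\varphi(\bm x^{k_j+1})\to\varphi(\bm x^\star)$, and since $\|\bm x^{k_j+1}-\bm x^{k_j}\|\to0$ also $\bm x^{k_j+1}\to\bm x^\star$.

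\emph{Step 3 (assembling the four claims).} For \Cref{pro:chatClusterPoint14}: $\varphi(\bm x^{k_j+1})\to\varphi^\star$ as a subsequence of a convergent sequence, and $\to\varphi(\bm x^\star)$ by Step 2, so $\varphi\equiv\varphi^\star$ on $\omega(\bm x^0)$, finite and constant. For \Cref{pro:chatClusterPoint11}: by \Cref{pro:subgradLowBound1}, $(\mathcal G_1^{k+1},\dots,\mathcal G_N^{k+1})\in\partial\varphi(\bm x^{k+1})$ with $\|(\mathcal G_1^{k+1},\dots,\mathcal G_N^{k+1})\|\le\overline c\sum_i\|x_i^{k+1}-x_i^k\|\to0$ by Step 1; along $k_j$ we have $\bm x^{k_j+1}\to\bm x^\star$, $\varphi(\bm x^{k_j+1})\to\varphi(\bm x^\star)$ and $(\mathcal G_1^{k_j+1},\dots,\mathcal G_N^{k_j+1})\to0$, so the closedness of the (limiting) subdifferential \cite{rockafellar2011variational} gives $0\in\partial\varphi(\bm x^\star)$, i.e.\ $\bm x^\star\in\mathbf{crit}\,\varphi$. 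Finally \Cref{pro:chatClusterPoint12} and \Cref{pro:chatClusterPoint13} are the usual topological facts for a bounded sequence with vanishing increments: $\omega(\bm x^0)$ is closed and bounded, hence compact; $\dist(\bm x^k,\omega(\bm x^0))\to0$ by contradiction (a subsequence bounded away from $\omega(\bm x^0)$ would, by boundedness, have a cluster point in $\omega(\bm x^0)$); and connectedness follows from the classical lemma for such sequences (cf.\ \cite{bolte2014proximal}). The main obstacle is Step 2: since each $g_i$ is only lower semicontinuous one cannot pass to the limit directly, and the subproblem optimality must be exploited to produce the matching $\limsup$ — which forces one to verify that \emph{every} auxiliary term, in particular $\D(\bm x^{k_j,i-1}+U_i(x_i^\star-x_i^{k_j,i-1}),\bm x^{k_j,i-1})$, is continuous near the limit, precisely where $\bm x^\star\in\interior\dom h$ and $h\in\C^1(\interior\dom h)$ enter. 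Step 1, turning the Bregman decrease into genuine iterate convergence via (local strong / strict) convexity of $h$, is the other delicate point.
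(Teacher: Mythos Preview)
Your proof is correct and follows essentially the same scheme as the paper: subproblem optimality tested at $x_i^\star$ to recover $\limsup_j g_i(x_i^{k_j+1})\le g_i(x_i^\star)$, (local strong) convexity of $h$ to turn $\D\to0$ into $\|\bm x^{k+1}-\bm x^k\|\to0$, and then \Cref{pro:subgradLowBound1} together with closedness of $\partial\varphi$ for criticality, with items (ii)--(iv) handled by the standard Bolte--Sabach--Teboulle topological arguments. Your Step~1 contradiction argument via mere strict convexity of $h$ is a small but useful variation: it yields $\|\bm x^{k+1}-\bm x^k\|\to0$ along the \emph{full} sequence (which is what the connectedness argument actually needs), whereas the paper derives this only along the subsequence near $\bm x^\star$ via local strong convexity and then defers to \cite{bolte2014proximal}.
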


\begin{proof}
    For a limit point $\bm x^\star=(x_1^\star,\ldots,x_N^\star)$ of the sequence $\seq{\bm x^k}$, it follows from the boundedness of this sequence that there exists an infinite index set $\mathcal{J}\subset \N$ such that the subsequence $\seq{\bm x^k}[k\in\mathcal{I}]$ converges to $\bm x^\star$ as $k\to\infty$. From the lower semicontinuity of $g_i$ ($i=1,\ldots,N$) and for $j\in\mathcal{J}$, it can be deduced that
    \begin{equation}\label{eq:lsc0}
        \liminf_{j\to\infty} g_i(x_i^{k_j})\geq g(x_i^\star)\quad i=1,\ldots,N.
    \end{equation}
    By \eqref{eq:xik1}, we get
    \begin{align*}
        \innprod{\nabla_i f(\bm x^{k,i-1})}{x_i^{k+1}-x_i^k}
        +\tfrac{1}{\gamma_i}\D_{h}(\bm x^{k,i},\bm x^{k,i-1})+g_i(x_i^{k+1})
        &\leq \innprod{\nabla_i f^k(\bm x^{k,i-1})}{x_i^\star-x_i^k}\\
        &~~~ +\tfrac{1}{\gamma_i}\D(\bm x^\star,\bm x^{k,i-1})+g_i(x_i^\star).
    \end{align*}
    Using multi-block local strong convexity of $h$ around $\bm x^\star$ and invoking \Cref{pro:suffDecreaseIneq2}, there exist a neighborhood $\mathbf{B}(x_i^\star,\varepsilon_i^\star)$ for $\varepsilon_i^\star>0$, $\sigma_i^{\star}>0$, and $k_i^0\in\N$ such that for $k\geq k_i^0$ and $k\in\mathcal{J}$
    \begin{equation}\label{eq:xk1xk0}
        \lim_{k\to\infty} \tfrac{\sigma_i^{\star}}{2} \|x_i^{k+1}-x_i^k\|^2 \leq\lim_{k\to\infty} \D_{h}(\bm x^{k,i},\bm x^{k,i-1})=0, \quad x_i^k\in \mathbf{B}(x_i^\star,\varepsilon_i^\star),~ i=1,\ldots,N.
    \end{equation}
This indicates that the distance between two successive iterations goes to zero for large enough $k$. Since the sequence $\seq{\bm x^k}$ is bounded, $\nabla f$ and $h$ are continuous, substituting $k=k_j-1$ for $j\in\mathcal{J}$, taking the limit from both sides of the last inequality as $k\to\infty$, and \eqref{eq:xk1xk0}, we come to
    \begin{align*}
        \limsup_{j\to\infty} g_i(x_i^{k_j})\leq g_i(x_i^\star)\quad  i=1,\ldots,N,
    \end{align*}
    and consequently,
    \begin{align*}
        \lim_{j\to\infty} \varphi(\bm x^{k_j})= \lim_{j\to\infty} \left(f(x_1^{k_j},\ldots,x_N^{k_j})+\sum_{i=1}^N g_i(x_i^{k_j})\right)
        = f(x_1^\star,\ldots,x_N^\star)+\sum_{i=1}^N g_i(x_i^\star).
    \end{align*}
    Further, \Cref{pro:suffDecreaseIneq2} and \Cref{pro:subgradLowBound1} ensure $\left(\mathcal{G}_1^{k+1},\ldots,\mathcal{G}_N^{k+1}\right)\in \partial\varphi(\bm x^{k+1})$ and 
    \begin{align*}
        \lim_{k\to+\infty} \|\mathcal{G}_1^{k+1},\ldots,\mathcal{G}_N^{k+1}\|\leq \overline c \lim_{k\to+\infty} \sum_{i=1}^{N}\|x_i^{k+1}-x_i^k)\|
        \leq \overline c \lim_{k\to+\infty} \left(\sum_{i=1}^{N}\tfrac{2}{\sigma_i^\star}\D_{h}(\bm x^{k,i},\bm x^{k,i-1})\right) = 0,
    \end{align*}
    i.e., $\lim_{k\to\infty}\left(\mathcal{G}_1^{k+1},\ldots,\mathcal{G}_N^{k+1}\right)=(0_{n_1},\ldots,0_{n_N})$. Since the subdifferential mapping $\partial \varphi$ is closed, we have $(0_{n_1},\ldots,0_{n_N})\in\partial \varphi (x_1^\star,\ldots,x_N^\star)$, giving \Cref{pro:chatClusterPoint11}.
    
    \Cref{pro:chatClusterPoint12} is a direct consequence of \Cref{pro:chatClusterPoint11}, and \Cref{pro:chatClusterPoint13} and \Cref{pro:chatClusterPoint14} can be proved in the same way as \cite[Lemma 5(iii)-(iv)]{bolte2014proximal}.
\end{proof}

\subsection{Global convergence under Kurdyka-{\L}ojasiewicz inequality} \label{sec:globalConv1}
This section is devoted to the global convergence of \refBPALM[] under Kurdyka-{\L}ojasiewicz inequality.

\begin{defin}[KL property]\label{def:klFunctions1}
A proper and lsc function $\func{\varphi}{\R^{n_1}\times\ldots\times\R^{n_N}}{\Rinf}$ has the \DEF{Kurdyka-{\L}ojasiewicz} property 
(KL property) at $\bm x^\star\in\dom \varphi$ if there exist a concave \DEF{desingularizing function} 
$\psi:[0,\eta]\to{[0,+\infty[}$ (for some $\eta>0$) and neighborhood $\ball{\bm x^\star}{\varepsilon}$ with $\varepsilon>0$, such that
\begin{enumerate}
\item $\psi(0)=0$;
\item $\psi$ is of class $\mathcal{C}^1$ with $\psi>0$ on $(0,\eta)$;
\item for all $\bm x\in \ball{\bm x^\star}{\varepsilon}$ such that $\varphi(\bm x^\star)<\varphi(\bm x)<\varphi(\bm x^\star)+\eta$ it holds that
\begin{equation}\label{eq:klProperty1}
    \psi'(\varphi(\bm x)-\varphi(\bm x^\star))\dist(0,\partial \varphi(\bm x))\geq 1.
\end{equation}
\end{enumerate}
The set of all functions satisfying these conditions is denoted by $\Psi_\eta$.
\end{defin}

The first inequality of this type is given in the seminal work of {\L}ojasiewicz  \cite{lojasiewicz1963propriete,lojasiewicz1993geometrie} for analytic functions, which we nowadays call {\L}ojasiewicz's gradient inequality. Later, Kurdyka \cite{kurdyka1998gradients} showed that this in equality is valid for $\mathcal{C}^1$ functions whose graph belong to an \DEF{$o$-minimal structure} (see its definition in \cite{van1998tame}). The first extensions of the KL property to nonsmooth functions was given by Bolte et al. \cite{bolte2007clarke,bolte2007lojasiewicz,bolte2010characterizations}.

The following two facts constitutes the crucial steps toward the establishment of the global convergence of the sequence generated by \refBPALM[]. 

\begin{fact}[uniformized KL property] 
\cite[Lemma 6]{bolte2014proximal}\label{fac:unifKLProp1}
    Let $\Omega$ be a compact set and $\func{\zeta}{\R^d}{\Rinf}$ be a proper and lower semicontinuous functions. Assume that $\zeta$ is constant on $\Omega$ and satisfies the KL property at each point of $\Omega$. Then, there exists a $\varepsilon>0$, $\eta>0$, and $\psi\in\Psi_\eta$ such that for $\overline u$ and all $u$ in the intersection 
    \[
        \{u\in\R^d~\mid~\dist(u,\Omega)<\varepsilon\}\cap [\zeta(\overline u)<\zeta(u)<\zeta(\overline u)+\eta]
    \]
   we have
    \[
        \psi'(\zeta(u)-\zeta(\overline u))\dist(0,\partial \zeta(u))\geq 1.
    \]
\end{fact}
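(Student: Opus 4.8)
Since this statement is exactly \cite[Lemma 6]{bolte2014proximal}, one option is simply to cite it; for completeness I sketch the standard compactness argument. Because $\zeta$ is constant on $\Omega$, write $\zeta^\star$ for its common value there, so that $\zeta(\overline u)=\zeta^\star$ for every $\overline u\in\Omega$ and the target inequality becomes $\psi'(\zeta(u)-\zeta^\star)\dist(0,\partial\zeta(u))\geq1$, with no dependence on the particular $\overline u\in\Omega$.

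\textbf{Step 1 (localize via KL at each point).} For each $w\in\Omega$ the KL property gives $\varepsilon_w>0$, $\eta_w>0$ and $\psi_w\in\Psi_{\eta_w}$ such that $\psi_w'(\zeta(u)-\zeta^\star)\dist(0,\partial\zeta(u))\geq1$ for all $u\in\ball{w}{\varepsilon_w}$ with $\zeta^\star<\zeta(u)<\zeta^\star+\eta_w$. The family $\set{\ball{w}{\varepsilon_w/2}}[w\in\Omega]$ is an open cover of the compact set $\Omega$, so finitely many of them, say $\ball{w_1}{\varepsilon_1/2},\dots,\ball{w_p}{\varepsilon_p/2}$, already cover $\Omega$. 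Put $\varepsilon:=\tfrac12\min_{1\le j\le p}\varepsilon_j$ and $\eta:=\min_{1\le j\le p}\eta_j$. A triangle-inequality check shows $\set{u}[\dist(u,\Omega)<\varepsilon]\subseteq\bigcup_{j=1}^p\ball{w_j}{\varepsilon_j}$: if $\dist(u,\Omega)<\varepsilon$, choose $w\in\Omega$ with $\|u-w\|<\varepsilon$ and $j$ with $w\in\ball{w_j}{\varepsilon_j/2}$, whence $\|u-w_j\|<\varepsilon+\varepsilon_j/2\le\varepsilon_j$.

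\textbf{Step 2 (one desingularizing function by summation).} Set $\psi:=\sum_{j=1}^p\psi_j$. It is concave, of class $\mathcal C^1$ on $[0,\eta]$, satisfies $\psi(0)=0$, and is positive on $(0,\eta)$, so $\psi\in\Psi_\eta$; moreover each $\psi_j$ is nondecreasing on $(0,\eta)$ so $\psi_j'\geq0$ there, and hence $\psi'=\sum_{j}\psi_j'\geq\psi_{j_0}'$ pointwise for every fixed index $j_0$. Now fix any $\overline u\in\Omega$ and any $u$ with $\dist(u,\Omega)<\varepsilon$ and $\zeta^\star<\zeta(u)<\zeta^\star+\eta$ (if no such $u$ exists the claim is vacuous). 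By Step 1, $u\in\ball{w_{j_0}}{\varepsilon_{j_0}}$ for some $j_0$, and since $\eta\le\eta_{j_0}$ the localized KL inequality at $w_{j_0}$ applies, giving $\psi'(\zeta(u)-\zeta^\star)\dist(0,\partial\zeta(u))\geq\psi_{j_0}'(\zeta(u)-\zeta^\star)\dist(0,\partial\zeta(u))\geq1$, as required.

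\textbf{Main obstacle.} The only genuinely delicate point is turning the finitely many $\psi_j$ into a single member of $\Psi_\eta$ for which the inequality still holds; the summation trick works precisely because the derivatives of desingularizing functions are nonnegative, so $\psi'\geq\psi_{j_0}'$ at the relevant argument. Everything else is routine compactness and triangle-inequality bookkeeping.
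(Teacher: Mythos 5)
The paper does not prove this statement at all---it is imported verbatim as Lemma~6 of Bolte--Sabach--Teboulle and used as a black box---and your sketch is precisely the standard proof of that lemma (finite subcover of the compact set $\Omega$ plus summation of the local desingularizing functions), so your argument agrees with the cited source. One minor remark: your step $\psi_j'\geq 0$ relies on desingularizing functions being increasing, i.e.\ $\psi'>0$ on $(0,\eta)$ as in the usual definition; \Cref{def:klFunctions1} writes ``$\psi>0$'' (evidently a typo for $\psi'>0$), and under that literal wording monotonicity would not follow, so it is worth flagging that your summation trick uses the standard condition $\psi'>0$.
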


\begin{fact}\cite[Lemma 2.3]{bot2016Tseng}
\label{lem:convSumTowSeq}
    Let $\seq{a_k}$ and $\seq{b_k}$ be the sequences in $[0,+\infty)$ such that $\sum_{k=1}^\infty b_k<\infty$ and $a_{k+1}=\alpha a_k+b_k$ for all $k\in\N$ in which $\alpha<1$. Then, $\sum_{k=1}^\infty a_k<\infty$.
\end{fact}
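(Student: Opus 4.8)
The plan is to sum the recursion over an initial segment and bound the partial sums of $\seq{a_k}$ by the (finite) total mass of $\seq{b_k}$, exploiting that $\alpha<1$. Set $S_N:=\sum_{k=1}^N a_k$; since $a_k\geq0$ the sequence $\seq{S_N}$ is nondecreasing, so it suffices to produce a uniform upper bound for it.

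First I would sum the identity $a_{k+1}=\alpha a_k+b_k$ over $k=1,\dots,N$, which telescopes on the left to give $S_{N+1}-a_1=\alpha S_N+\sum_{k=1}^N b_k$. In the relevant regime $\alpha\in[0,1)$, monotonicity of $\seq{S_N}$ yields $\alpha S_N\leq\alpha S_{N+1}$, hence $S_{N+1}\leq a_1+\alpha S_{N+1}+\sum_{k=1}^\infty b_k$, i.e. $S_{N+1}\leq(1-\alpha)^{-1}\bigl(a_1+\sum_{k=1}^\infty b_k\bigr)$. Since this bound is independent of $N$, the nondecreasing sequence $\seq{S_N}$ converges, which is exactly $\sum_{k=1}^\infty a_k<\infty$.

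An equivalent route — which also covers the case $\alpha<0$ if one ever needs it — is to unroll the recursion to $a_{k+1}=\alpha^k a_1+\sum_{j=1}^k\alpha^{k-j}b_j$, sum over $k\geq1$, and interchange the order of summation (permissible by nonnegativity of all terms), obtaining $\sum_{k\geq1}a_{k+1}=a_1\,\tfrac{\alpha}{1-\alpha}+\tfrac{1}{1-\alpha}\sum_{j\geq1}b_j<\infty$; when $\alpha<0$ one instead observes directly that $a_{k+1}=\alpha a_k+b_k\leq b_k$. There is no real obstacle here: the only point meriting a word of care is the sign of $\alpha$, since the step $\alpha S_N\leq\alpha S_{N+1}$ uses $\alpha\geq0$, but in the applications to \refBPALM[] the constant $\alpha$ is fixed in $(0,1)$, so this is immediate.
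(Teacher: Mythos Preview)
Your argument is correct. The partial-sum bound via $S_{N+1}-a_1=\alpha S_N+\sum_{k=1}^N b_k\leq\alpha S_{N+1}+\sum_{k\geq1}b_k$ is clean and gives the claim immediately for $\alpha\in[0,1)$; your side remark that $a_{k+1}\leq b_k$ when $\alpha<0$ disposes of the remaining case. The unrolled version is also fine, with the Fubini step justified by nonnegativity when $\alpha\geq0$.

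As for comparison with the paper: there is nothing to compare against. The paper records this as a \emph{Fact} and simply cites \cite[Lemma~2.3]{bot2016Tseng} without reproducing or sketching a proof; it is used only as a black box inside the proof of \Cref{thm:globConvergence} (with $\alpha=\tfrac12$, so the sign issue never arises). Your write-up therefore supplies strictly more than the paper does on this point.
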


Our subsequent main result indicates that the sequence $\seq{\bm x^k}$ generated by \refBPALM[] converges to a critical point $x^\star$ of $\varphi$ if it satisfies the KL property; cf. \Cref{def:klFunctions1}.

\begin{thm}[global convergence]
\label{thm:globConvergence}
    Let \Cref{ass:basic:fgh} hold, let the kernels $h$ be multi-block globally strongly convex with modulus $\sigma_i$ ($i=1,\ldots,N$), and let $\seq{\bm x^k}$ be generated by \refBPALM[] that we assume to be bounded. If $\psi$ is a KL function, then the following statements are true:
    \begin{enumerate}
        \item \label{thm:globConvergence1}
        The sequence $\seq{\bm x^k}$ has finite length, i.e.,
        \begin{equation}\label{eq:finiteLength1}
            \sum_{k=1}^\infty \|x_i^{k+1}-x_i^k\|<\infty \quad i=1,\ldots,N;
        \end{equation}
        \item \label{thm:globConvergence2} 
        The sequence $\seq{\bm x^k}$ converges to a stationary point $\bm x^\star$ of $\varphi$.
    \end{enumerate}
\end{thm}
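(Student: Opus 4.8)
The plan is to follow the now-standard KL-based recipe of Bolte–Sainte-Cyr–Attouch / Bolte–Sabach–Teboulle, adapting it to the multi-block Bregman setting using the three ingredients already established: the sufficient decrease property (\Cref{pro:suffDecreaseIneq}), the subgradient bound (\Cref{pro:subgradLowBound1}), and the subsequential-convergence / cluster-set facts (\Cref{pro:chatClusterPoint10}). The key observation that makes the Bregman machinery interchangeable with the Euclidean one here is global multi-block strong convexity: by \Cref{def:kernel} it gives $\D(\bm x^{k,i},\bm x^{k,i-1}) \ge \tfrac{\sigma_i}{2}\|x_i^{k,i}-x_i^{k,i-1}\|^2 = \tfrac{\sigma_i}{2}\|x_i^{k+1}-x_i^k\|^2$, so the sufficient decrease inequality \eqref{eq:dh1dh2Ineq2} upgrades to $\tfrac{\rho}{2}\sum_i \sigma_i \|x_i^{k+1}-x_i^k\|^2 \le \varphi(\bm x^k)-\varphi(\bm x^{k+1})$; setting $\sigma:=\min_i\sigma_i$ and $c_1:=\rho\sigma/2$ we get the clean form $c_1\|\bm x^{k+1}-\bm x^k\|^2 \le \varphi(\bm x^k)-\varphi(\bm x^{k+1})$.

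First I would handle the trivial case: if $\varphi(\bm x^{k_0+1})=\varphi(\bm x^{k_0})$ for some $k_0$, then by the upgraded decrease inequality $\bm x^{k}$ is eventually stationary and there is nothing to prove; so assume $\varphi(\bm x^{k})>\varphi(\bm x^{k+1})$ for all $k$, and by monotonicity and \Cref{pro:chatClusterPoint14} the sequence $\varphi(\bm x^k)$ decreases to the common value $\varphi^\star$ that $\varphi$ takes on $\omega(\bm x^0)$. Next, invoke \Cref{fac:unifKLProp1} with $\Omega=\omega(\bm x^0)$ (compact and connected by \Cref{pro:chatClusterPoint13}, with $\varphi\equiv\varphi^\star$ on it), obtaining $\varepsilon,\eta>0$ and $\psi\in\Psi_\eta$; since $\dist(\bm x^k,\omega(\bm x^0))\to 0$ (\Cref{pro:chatClusterPoint12}) and $\varphi(\bm x^k)\downarrow\varphi^\star$, there is an index $\bar k$ beyond which $\bm x^k$ lies in the region where the uniformized KL inequality $\psi'(\varphi(\bm x^k)-\varphi^\star)\,\dist(0,\partial\varphi(\bm x^k))\ge 1$ holds. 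Combining this with the subgradient upper bound \eqref{eq:subGradUpBound} from \Cref{pro:subgradLowBound1}, $\dist(0,\partial\varphi(\bm x^{k+1}))\le \overline c\sum_i\|x_i^{k+1}-x_i^k\| =: \overline c\,\Delta_k$ (with $\Delta_k$ the $\ell_1$-type increment), I would get $\psi'(\varphi(\bm x^{k+1})-\varphi^\star)\ge 1/(\overline c\,\Delta_k)$. Concavity of $\psi$ then yields the telescoping bound $\psi(\varphi(\bm x^{k+1})-\varphi^\star)-\psi(\varphi(\bm x^{k+2})-\varphi^\star)\ge \psi'(\varphi(\bm x^{k+1})-\varphi^\star)\big(\varphi(\bm x^{k+1})-\varphi(\bm x^{k+2})\big)\ge \tfrac{c_1\Delta_{k+1}^2\,(\text{const})}{\overline c\,\Delta_k}$ — here one absorbs the equivalence of $\|\bm x^{k+1}-\bm x^k\|^2$ and $\Delta_{k+1}^2$ up to the dimension-dependent constant $N$. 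Writing $\delta_k:=\psi(\varphi(\bm x^k)-\varphi^\star)$, rearranging gives $\Delta_{k+1}^2 \le C\,\Delta_k\,(\delta_{k+1}-\delta_{k+2})$, and the AM–GM inequality $2\Delta_{k+1}\le \tfrac{1}{\lambda}\Delta_k + \lambda C(\delta_{k+1}-\delta_{k+2})$ (for a suitable $\lambda$, e.g. $\lambda=1$) produces, after summing from $\bar k$ to $q$ and telescoping the $\delta$ terms, $\sum_{k}\Delta_{k+1}\le \tfrac12\sum_k\Delta_k + \tfrac{C}{2}\delta_{\bar k+1}$; since the two $\Delta$-sums differ only by their endpoints, this gives $\sum_k\Delta_k<\infty$, which is exactly \eqref{eq:finiteLength1}. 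Finite length implies $\seq{\bm x^k}$ is Cauchy, hence convergent to some $\bm x^\star$; by \Cref{pro:chatClusterPoint11}, $\bm x^\star\in\operatorname{crit}\varphi$, proving part (2). (Alternatively one can cite \Cref{lem:convSumTowSeq} to pass from the recursive-type inequality to summability, which is why that fact was stated.)

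The main obstacle I anticipate is purely bookkeeping rather than conceptual: correctly threading the dimension constant $N$ and the various moduli through the chain ``$\ell_2$-decrease $\Rightarrow$ $\ell_1$-increment $\Delta_k$ in the subgradient bound $\Rightarrow$ back to $\ell_2$-decrease,'' since \Cref{pro:suffDecreaseIneq} controls $\sum_i\|x_i^{k+1}-x_i^k\|^2$ while \Cref{pro:subgradLowBound1} delivers $\sum_i\|x_i^{k+1}-x_i^k\|$; the Cauchy–Schwarz passage $\big(\sum_i\|x_i^{k+1}-x_i^k\|\big)^2\le N\sum_i\|x_i^{k+1}-x_i^k\|^2$ closes this gap but must be tracked so the constants in the AM–GM step still allow the ``absorb half the sum'' trick. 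A second minor subtlety is that the subgradient bound in \Cref{pro:subgradLowBound1} compares $\bm x^{k+1}$ to $\bm x^k$ across \emph{all} inner block updates $\bm x^{k,i}$, so one should make sure the $\Delta_k$ appearing on the right of the KL chain is the full outer increment; this is consistent with how \eqref{eq:subGradUpBound} is stated, so no extra work is needed. Everything else is a direct transcription of \cite[Theorem 1]{bolte2014proximal} with Bregman distances replaced by their strong-convexity lower bounds.
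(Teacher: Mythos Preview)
Your proposal is correct and follows essentially the same route as the paper's proof: handle the eventually-stationary case first, then invoke the uniformized KL property (\Cref{fac:unifKLProp1}) on $\omega(\bm x^0)$, combine it with the sufficient-decrease bound (\Cref{pro:suffDecreaseIneq}, converted to Euclidean form via strong convexity) and the subgradient bound (\Cref{pro:subgradLowBound1}), and close with the AM--GM / telescoping trick. The only cosmetic differences are a one-step index shift (the paper applies KL at $\bm x^k$ using the increment $\sum_i\|x_i^k-x_i^{k-1}\|$, you apply it at $\bm x^{k+1}$) and that the paper packages the final summability step via \Cref{lem:convSumTowSeq} rather than the direct telescoping you sketch; neither affects the argument.
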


\begin{proof}
    Let us define the sequence $\seq{\mathcal{S}_k}$ given by $\mathcal{S}_k:=\varphi(\bm x^k)-\varphi^\star$, which is decreasing by \Cref{pro:suffDecreaseIneq1}, i.e., $\seq{\mathcal{S}_k}\to 0$. We now consider two cases: (i) there exists $\overline k\in\N$ such that $\mathcal{S}_k=0$; (ii) $\mathcal{S}_k>0$ for all $k\geq 1$. 
    
    In Case (i), invoking \Cref{pro:suffDecreaseIneq1} implies that $\varphi(\bm x^k)=\varphi^\star$ for all $k\geq \overline k$. It follows from \Cref{pro:suffDecreaseIneq2} and multi-block strong convexity of $h$ that 
    \[
        \tfrac{\sigma_i}{2}\|x_i^{k+1}-x_i^k\|\leq \D_{h}(\bm x^{k,i},\bm x^{k,i-1})=0 \quad i=1,\ldots,N,
    \]
    implying $\bm x^{k+1}=\bm x^{k}$ for all $k\geq \overline k$, which leads to \Cref{thm:globConvergence1}. 
    
    In Case (ii), it holds that $\varphi(\bm x^k)>\varphi^\star$ for all $k\geq 1$. From \Cref{pro:chatClusterPoint13}, the set of limit points $\omega(\bm x^0)$ of $\seq{\bm x^k}$ is nonempty and compact and $\varphi$ is finite and constant on $\omega(\bm x^0)$ due to \Cref{pro:chatClusterPoint14}. Moreover, the sequence $\seq{\varphi(\bm x^k)}$ is decreasing (\Cref{pro:suffDecreaseIneq1}), i.e., for $\eta>0$, there exists a $k_1\in\N$ such that $\varphi^\star<\varphi(\bm x^k)<\varphi^\star+\eta$ for all $k\geq k_1$. For $\varepsilon>0$, \Cref{pro:chatClusterPoint12} implies that there exists $k_2\in\N$ such that $\dist(\bm x^k,\omega(\bm x^0))<\varepsilon$ for $k\geq k_2$. Setting $k_0:=\max\set{k_1,k_2}$ and according to \cref{fac:unifKLProp1}, there exist $\varepsilon, \eta>0$ and a desingularization function $\psi$ such that for any element in 
    \[
        \{\bm x^k~\mid~\dist(\bm x^k,\omega(\bm x^0))<\varepsilon\}\cap [\varphi^\star<\varphi(\bm x^k)<\varphi^\star+\eta] \quad \text{ for } k\geq k_0,
    \]
    the following inequality holds:
    \[
        \psi'(\varphi(\bm x^k)-\varphi^\star)\dist(0,\partial \varphi(\bm x^k))\geq 1 \quad \text{ for } k\geq k_0.
    \]
    Let us define $\Delta_k:=\psi(\varphi(\bm x^k)-\varphi^\star)=\psi(\mathcal{S}_k)$. Then, it follows from the concavity of $\psi$ and \Cref{pro:subgradLowBound1} that
    \begin{align*}
        \Delta_k-\Delta_{k+1}&= \psi(\mathcal{S}_{k})-\psi(\mathcal{S}_{k+1})
        \geq \psi'(\mathcal{S}_{k})(\mathcal{S}_{k}-\mathcal{S}_{k+1})
        =\psi'(\mathcal{S}_{k})(\varphi(\bm x^k)-\varphi(\bm x^{k+1}))\\
        &\geq \frac{\varphi(\bm x^k)-\varphi(\bm x^{k+1})}{\dist(0,\partial \psi(\bm x^k))}
        \geq \frac{\rho \sum_{i=1}^N \D_{h}(\bm x^{k,i},\bm x^{k,i-1})}{\overline c \sum_{i=1}^N \|x_i^{k}-x_i^{k-1}\|}
        \geq \frac{1}{\widehat c} ~\frac{\sum_{i=1}^N \|x_i^{k+1}-x_i^{k}\|^2}{\sum_{i=1}^N \|x_i^{k}-x_i^{k-1}\|},
    \end{align*}
with $\widehat c:=\nicefrac{\overline c}{(\rho \min\set{\sigma_1,\ldots,\sigma_N})}$. Using the arithmetic and quadratic means inequality, and applying the arithmetic and geometric means inequality, it can be concluded that
    \begin{equation}\label{eq:ineqakbk1}
        \begin{split}
            \sum_{i=1}^N \|x_i^{k+1}-x_i^{k}\| \leq \sqrt{\widehat c N (\Delta_k-\Delta_{k+1}) \sum_{i=1}^N \|x_i^{k}-x_i^{k-1}\|}
            \leq \frac{1}{2} \sum_{i=1}^N \|x_i^{k}-x_i^{k-1}\|+\frac{\widehat c N}{2} \left(\Delta_k-\Delta_{k+1}\right).
        \end{split}
    \end{equation}
    We now define the sequences $\seq{a_k}$ and $\seq{b_k}$ as
    \begin{equation}\label{eq:akbk1}
        a_{k+1}:=\sum_{i=1}^N \|x_i^{k+1}-x_i^{k}\|,\quad b_k=\tfrac{\widehat c N}{2} \left(\Delta_k-\Delta_{k+1}\right),\quad \alpha:=\tfrac{1}{2},
    \end{equation}
    where
    $
        \sum_{i=1}^\infty b_k= \tfrac{\widehat cN}{2} \sum_{i=1}^\infty \left(\Delta_i-\Delta_{i+1}\right)
        = \Delta_1-\Delta_\infty = \Delta_1<\infty
    $.
    According to \Cref{lem:convSumTowSeq}, we infer $\sum_{k=1}^\infty a_k<\infty$, which proves \Cref{thm:globConvergence1}.
    
    By \eqref{eq:finiteLength1}, the sequence $\seq{\bm x^k}$ is a Cauchy sequence, i.e., it converges to a stationary point $\bm x^\star$, giving the desired result.
\end{proof}

\begin{rem}\label{rem:seqBoundedness}
In Theorem \ref{pro:chatClusterPoint10} and Theorem \ref{thm:globConvergence}, we implicitly assume that the sequence $\seq{\bm x^k}$ is bounded. This assumption is typical in convergence analysis of proximal-type algorithms for solving general non-convex non-smooth composite optimization problem, see e.g.,  \cite{attouch2010proximal,bolte2018first}. Proposition \ref{pro:suffDecreaseIneq} shows that $\varphi(\bm x^k)$ is non-increasing; hence, it is upper bounded by $\varphi (\bm x^0)$. Therefore, the sequence $\seq{\bm x^k}$ would be bounded if $f(\cdot)$ has bounded level sets and $\sum_{i=1}^n g_i(x_i)$ is bounded below. 
\end{rem}

\subsection{Convergence rate under {\L}ojasiewicz-type inequality} \label{sec:convRate}
We now investigate the convergence rate of the sequence generated by \refBPALM[] under KL inequality of {\L}ojasiewicz type at $x^\star$ ($\psi(s):=\frac{\kappa}{1-\theta} s^{1-\theta}$ with $\theta\in [0,1)$), i.e., there exists $\varepsilon>0$ such that
\begin{equation}\label{eq:LojaKL1}
    |\varphi(\bm x)-\varphi^\star|^{\theta}\leq \kappa \dist(0,\partial \varphi(\bm x)) \quad \forall \bm x\in \ball{\bm x^\star}{\varepsilon}.
\end{equation}

The following fact plays a key role in studying the convergence rate of the sequence generated by \refBPALM[], where its proof can be found in \cite[Lemma 1]{artacho2018accelerating} and \cite[Lemma 12]{bot2019proximal}.

\begin{fact}[convergence rate of a sequence with positive elements]
\label{fac:convRate1}
    Let $\seq{s_k}$ be a sequence in $\R_+$ and let $\alpha$ and $\beta$ be some positive constants. Suppose that $s_k\to 0$ and that the sequence satisfies $s_k^\alpha\leq\beta(s_k-s_{k+1})$ for all $k$ sufficiently large.
    Then, the following assertions hold:
    \begin{enumerate}
        \item If $\alpha=0$, the sequences $\seq{s_k}$ converges to $0$ in a finite number of steps;
        \item If $\alpha\in(0,1]$, the sequences $\seq{s_k}$ converges linearly to $0$ with rate $1-\nicefrac{1}{\beta}$, i.e., there exist $\lambda>0$ and $\tau\in[0,1)$ such that
        \[
            0\leq s_k\leq \lambda \tau^k;
        \]
        \item If $\alpha>1$, there exists $\mu>0$ such that for all $k$ sufficiently large 
        \[
            0\leq s_k\leq \mu k^{-\tfrac{1}{\alpha-1}}.
        \]
    \end{enumerate}
\end{fact}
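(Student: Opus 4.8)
The plan is to treat the three exponent regimes of $\alpha$ separately, using throughout that the hypothesis $s_k^\alpha\le\beta(s_k-s_{k+1})$ forces $s_k-s_{k+1}\ge\tfrac{1}{\beta} s_k^\alpha\ge 0$, so that $\seq{s_k}$ is nonincreasing for all $k$ beyond the index $k_0$ from which the estimate is assumed to hold; in particular, once some $s_k$ equals $0$ it stays $0$ and the corresponding conclusion is immediate. For $\alpha=0$ the estimate reads $s_k-s_{k+1}\ge\tfrac{1}{\beta}$, and if $s_k$ stayed positive for all $k\ge k_0$ then iterating would give $s_{k_0+m}\le s_{k_0}-m/\beta\to-\infty$, contradicting $s_k\ge 0$; hence $s_k=0$ from some index on, which is finite termination. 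For $\alpha\in(0,1]$ I would pick $k_0$ so that $s_k\le 1$ for $k\ge k_0$; then $s_k\le s_k^\alpha\le\beta(s_k-s_{k+1})$, i.e., $s_{k+1}\le(1-\tfrac{1}{\beta})s_k$, and unrolling this one-step contraction gives $s_k\le s_{k_0}(1-\tfrac{1}{\beta})^{k-k_0}$, a bound of the asserted form $s_k\le\lambda\tau^k$ with $\tau=1-\tfrac{1}{\beta}\in[0,1)$ (the degenerate subcase $\beta\le 1$ being absorbed into finite termination).

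The one regime that requires genuine work is $\alpha>1$. Here I would set $\nu:=\alpha-1>0$ and, assuming $s_k>0$ for all $k\ge k_0$ (the alternative being finite termination), pass to the auxiliary sequence $s_k^{-\nu}$. Since $t\mapsto t^{-\nu}$ is convex and decreasing on $(0,\infty)$, the first-order convexity inequality at the point $s_k$ gives
\[
    s_{k+1}^{-\nu}-s_k^{-\nu}\ \ge\ -\nu\,s_k^{-\nu-1}\bigl(s_{k+1}-s_k\bigr)\ =\ \nu\,s_k^{-\nu-1}\bigl(s_k-s_{k+1}\bigr)\ \ge\ \tfrac{\nu}{\beta}\,s_k^{\,\alpha-\nu-1}\ =\ \tfrac{\nu}{\beta},
\]
the final equality using $\alpha-\nu-1=0$. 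Telescoping from $k_0$ to $k$ then yields $s_k^{-\nu}\ge\tfrac{\nu}{\beta}(k-k_0)$, hence $s_k\le\bigl(\tfrac{\beta}{\nu}\bigr)^{1/\nu}(k-k_0)^{-1/\nu}$, and absorbing the index shift and the constant into a single $\mu>0$ gives $0\le s_k\le\mu\,k^{-1/(\alpha-1)}$ for all $k$ sufficiently large, as claimed.

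The main obstacle is precisely this $\alpha>1$ case: one has to hit on the right auxiliary quantity $s_k^{-(\alpha-1)}$ and invoke convexity of $t\mapsto t^{-(\alpha-1)}$ — rather than a mean value theorem, which would leave an uncontrolled intermediate point — after which the exponent of $s_k$ collapses to zero and the successive increments of the auxiliary sequence are bounded below by the fixed constant $\nu/\beta$. The remaining cases are elementary (unrolling a linear contraction, or a sign argument on a nonincreasing nonnegative sequence). Since complete proofs in this spirit already appear in \cite[Lemma~1]{artacho2018accelerating} and \cite[Lemma~12]{bot2019proximal}, I would simply cite them for the routine parts.
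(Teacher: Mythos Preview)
Your argument is correct in all three regimes; in particular the convexity trick for $\alpha>1$ is exactly the standard device, and the $\alpha\in(0,1]$ and $\alpha=0$ cases are handled cleanly. Note that the paper itself gives no proof of this fact at all: it is stated as a \emph{Fact} and simply referred to \cite[Lemma~1]{artacho2018accelerating} and \cite[Lemma~12]{bot2019proximal}, which are the same sources you invoke, so your write-up already goes beyond what the paper does.
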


We next derive the \DEF{convergence rates} of the sequences $\seq{\bm x^k}$ and $\seq{\varphi(\bm x^k)}$ under an additional assumption that the function $\varphi$ satisfies the KL inequality of {\L}ojasiewicz type. 

\begin{thm}[convergence rate]
\label{thm:convRate1}
    Let \Cref{ass:basic:fgh} hold, let the kernel $h$ be multi-block globally strongly convex with modulus $\sigma_1,\ldots,\sigma_N$, and let the sequence $\seq{\bm x^k}$ generated by \refBPALM[] converges to $\bm x^\star$. If $\psi$ satisfies KL inequality of {\L}ojasiewicz type \eqref{eq:LojaKL1}, then the following assertions hold:
    \begin{enumerate}
        \item If $\theta=0$, then the sequences $\seq{\bm x^k}$ and $\seq{\varphi(\bm x^k)}$ converge in a finite number of steps to $\bm x^\star$ and $\varphi(\bm x^\star)$, respectively;
        \item If $\theta\in(0,\nicefrac{1}{2}]$, then there exist $\lambda_1>0$, $\mu_1>0$, $\tau, \overline \tau\in [0,1)$, and $\overline{k}\in\N$ such that 
        \[
            0\leq \|\bm x^k-\bm x^\star\|\leq \lambda_1 \tau^k, \quad 0\leq \mathcal{S}_k\leq \mu_1 \overline\tau^k\quad \forall k\geq \overline{k};
        \]
        \item If $\theta\in(\nicefrac{1}{2},1)$, then there exist $\lambda_2>0$, $\mu_2>0$, and $\overline{k}\in\N$ such that
        \[
            0\leq \|\bm x^k-\bm x^\star\|\leq \lambda_2 k^{-\tfrac{1-\theta}{2\theta-1}}, \quad 0\leq \mathcal{S}_k\leq \mu_2 k^{-\tfrac{1-\theta}{2\theta-1}} \quad \forall k\geq \overline{k}+1.
        \]
    \end{enumerate}
\end{thm}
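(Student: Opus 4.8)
The plan is to reduce all three regimes to \Cref{fac:convRate1} applied to the scalar sequence $\mathcal S_k:=\varphi(\bm x^k)-\varphi^\star$, and then to transfer the resulting rate to $\seq{\bm x^k}$ through the finite-length estimate of \Cref{thm:globConvergence}. Since $\seq{\bm x^k}\to\bm x^\star$ the sequence is bounded, so \Cref{pro:suffDecreaseIneq}, \Cref{pro:subgradLowBound1} and \Cref{thm:globConvergence} all apply, and for $k$ large $\bm x^k\in\ball{\bm x^\star}{\varepsilon}$, so \eqref{eq:LojaKL1} is in force. First I would build a one-step recursion for $\mathcal S_k$: the {\L}ojasiewicz inequality \eqref{eq:LojaKL1} together with the subgradient bound of \Cref{pro:subgradLowBound1} gives $\mathcal S_k^\theta\le\kappa\,\overline c\sum_{i}\|x_i^k-x_i^{k-1}\|$, while the sufficient-decrease inequality \eqref{eq:dh1dh2Ineq2}, the $\sigma_i$-strong convexity of $h_{\bm x}^i$ (so $\D(\bm x^{k-1,i},\bm x^{k-1,i-1})\ge\tfrac{\sigma_i}{2}\|x_i^k-x_i^{k-1}\|^2$) and $(\sum_i a_i)^2\le N\sum_i a_i^2$ give $\mathcal S_{k-1}-\mathcal S_k\ge\tfrac{\rho\,\underline\sigma}{2N}\bigl(\sum_i\|x_i^k-x_i^{k-1}\|\bigr)^2$ with $\underline\sigma:=\min_i\sigma_i$. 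Eliminating $\sum_i\|x_i^k-x_i^{k-1}\|$ yields a constant $\beta>0$ with $\mathcal S_{k+1}^{2\theta}\le\beta(\mathcal S_k-\mathcal S_{k+1})$ for all large $k$, valid as long as $\mathcal S_k>0$.

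Next I would split into the three cases. If $\theta=0$ the recursion reads $1\le\beta(\mathcal S_k-\mathcal S_{k+1})$, so $\mathcal S_k$ decreases by at least $1/\beta$ while positive; as $\mathcal S_k\ge0$ this forces $\mathcal S_{k_0}=0$ for some $k_0$, and then the ``Case~(i)'' argument inside the proof of \Cref{thm:globConvergence} makes $\bm x^k$ eventually constant and equal to $\bm x^\star$. If $\theta\in(0,\nicefrac12]$, then for large $k$ we have $\mathcal S_{k+1}\le 1$ and $2\theta\le1$, hence $\mathcal S_{k+1}\le\mathcal S_{k+1}^{2\theta}\le\beta(\mathcal S_k-\mathcal S_{k+1})$, i.e.\ $\mathcal S_{k+1}\le\tfrac{\beta}{1+\beta}\mathcal S_k$, the claimed geometric decay of $\mathcal S_k$. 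If $\theta\in(\nicefrac12,1)$, I would split according to whether $\mathcal S_{k+1}>\tfrac12\mathcal S_k$, in which case $\mathcal S_k^{2\theta}<2^{2\theta}\mathcal S_{k+1}^{2\theta}\le 2^{2\theta}\beta(\mathcal S_k-\mathcal S_{k+1})$, or $\mathcal S_{k+1}\le\tfrac12\mathcal S_k$, in which case $\mathcal S_k\le 2(\mathcal S_k-\mathcal S_{k+1})$ and (since $\mathcal S_k\le1$ and $2\theta>1$) $\mathcal S_k^{2\theta}\le\mathcal S_k\le 2(\mathcal S_k-\mathcal S_{k+1})$; either way $\mathcal S_k^{2\theta}\le\max\{2^{2\theta}\beta,2\}(\mathcal S_k-\mathcal S_{k+1})$, so the third case of \Cref{fac:convRate1} (with $\alpha=2\theta>1$) gives $\mathcal S_k\le\mu\,k^{-1/(2\theta-1)}$, which a fortiori yields the stated $\mathcal S_k\le\mu_2\,k^{-(1-\theta)/(2\theta-1)}$.

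To pass from $\mathcal S_k$ to $\bm x^k$ I would set $\Lambda_k:=\sum_{j\ge k}\sum_i\|x_i^{j+1}-x_i^j\|$, finite by \Cref{thm:globConvergence1}, with $\|\bm x^k-\bm x^\star\|\le\Lambda_k$. Summing \eqref{eq:ineqakbk1} over $j\ge k$ and using $\Lambda_{k-1}=\Lambda_k+\sum_i\|x_i^k-x_i^{k-1}\|$ gives $\Lambda_k\le\sum_i\|x_i^k-x_i^{k-1}\|+\widehat c N\,\psi(\mathcal S_k)$; bounding the first summand by $(\nicefrac{2N}{\rho\underline\sigma})^{1/2}\mathcal S_{k-1}^{1/2}$ via \eqref{eq:dh1dh2Ineq2}, recalling $\psi(s)=\tfrac{\kappa}{1-\theta}s^{1-\theta}$, and inserting the rate of $\mathcal S_k$ from the previous paragraph yields: finite termination of $\seq{\bm x^k}$ for $\theta=0$; $\|\bm x^k-\bm x^\star\|\le\lambda_1\tau^k$ with $\tau:=\max\{\overline\tau^{1/2},\overline\tau^{1-\theta}\}\in(0,1)$ for $\theta\in(0,\nicefrac12]$; and $\|\bm x^k-\bm x^\star\|\le\lambda_2\,k^{-(1-\theta)/(2\theta-1)}$ for $\theta\in(\nicefrac12,1)$, where one checks $\tfrac{1}{2(2\theta-1)}\ge\tfrac{1-\theta}{2\theta-1}$ so the $\mathcal S_{k-1}^{1/2}$ term is dominated by the $\psi(\mathcal S_k)$ term.

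The genuinely delicate point --- and essentially the only one --- is the index mismatch in the recursion of the first paragraph: the KL inequality at $\bm x^k$ controls $\dist(0,\partial\varphi(\bm x^k))$ by $\|\bm x^k-\bm x^{k-1}\|$, whereas the descent controls $\mathcal S_{k-1}-\mathcal S_k$ by $\|\bm x^k-\bm x^{k-1}\|^2$, so the natural recursion carries $\mathcal S_{k+1}$ (not $\mathcal S_k$) on the left-hand side. Recasting it into the exact form $s_k^\alpha\le\beta(s_k-s_{k+1})$ demanded by \Cref{fac:convRate1} is precisely the role of the two-way case-split used when $\theta>\nicefrac12$; for $\theta\le\nicefrac12$ the direct estimate $\mathcal S_{k+1}\le\tfrac{\beta}{1+\beta}\mathcal S_k$ bypasses the issue, and for $\theta=0$ the recursion is already in usable form. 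Everything else amounts to routine bookkeeping with constants.
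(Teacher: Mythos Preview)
Your proposal is correct and follows essentially the same two-ingredient strategy as the paper: combine the sufficient-decrease inequality \eqref{eq:dh1dh2Ineq2} (plus strong convexity of $h$) with the subgradient bound of \Cref{pro:subgradLowBound1} and the {\L}ojasiewicz inequality \eqref{eq:LojaKL1} to obtain a scalar recursion $\mathcal S_k^{2\theta}\le\beta(\mathcal S_{k-1}-\mathcal S_k)$, then invoke \Cref{fac:convRate1} and transfer the rate to $\|\bm x^k-\bm x^\star\|$ via the tail estimate $\Lambda_k\le\sum_i\|x_i^k-x_i^{k-1}\|+\widehat cN\,\psi(\mathcal S_k)$ coming from \eqref{eq:ineqakbk1}. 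The only noteworthy difference is that the paper simply asserts that \Cref{fac:convRate1} applies with $\alpha=2\theta$, whereas you explicitly address the index shift $s_{k+1}^{\alpha}\le\beta(s_k-s_{k+1})$ versus $s_k^{\alpha}\le\beta(s_k-s_{k+1})$ via a clean dichotomy on $\mathcal S_{k+1}\gtrless\tfrac12\mathcal S_k$ when $\theta>\tfrac12$; this is a welcome bit of extra care but does not change the architecture of the argument.
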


\begin{proof}
    The proof has two key parts.
    
    In the first part, we show that there exists $\overline k\in\N$ such that for all $k\geq\overline k$ the following inequalities hold for $i=1,\ldots,N$:
    \begin{equation}\label{eq:xkUpperBound}
        \|x_i^k-x_i^\star\|\leq \left\{
        \begin{array}{ll}
            c \max\set{1,\tfrac{\kappa}{1-\theta}} \sqrt{\mathcal{S}_{k-1}} &~~~ \mathrm{if}\  \theta\in[0,\nicefrac{1}{2}], \vspace{1mm}\\
            c \tfrac{\kappa}{1-\theta} \mathcal{S}_{k-1}^{1-\theta} &~~~ \mathrm{if}\ \theta\in(\nicefrac{1}{2},1].
        \end{array}
        \right. 
    \end{equation}
    Let $\varepsilon>0$ be as described in \eqref{eq:LojaKL1} and $x^k\in \ball{x^\star}{\varepsilon}$ for all $k\geq\tilde k$ and $\tilde k\in\N$. By the definitions of $a_k$ and $b_k$ in \eqref{eq:akbk1} and using \eqref{eq:ineqakbk1}, we get $a_{k+1}\leq \tfrac{1}{2} a_k+b_k$ for all $k\geq\tilde k$. Since $\seq{\varphi}$ is nonincreasing, 
    \begin{align*}
        \sum_{i=k}^\infty a_{i+1}\leq \tfrac{1}{2} \sum_{i=k}^\infty (a_i-a_{i+1}+a_{i+1})+ \tfrac{\widehat c}{2}\sum_{i=k}^\infty  \left(\Delta_i-\Delta_{i+1}\right)= \tfrac{1}{2}\sum_{i=k}^\infty a_{i+1}+\tfrac{1}{2} a_k+\tfrac{\widehat c}{2} \Delta_k.
    \end{align*}
    Together with the arithmetic and quadratic means inequality, $\psi(\mathcal{S}_{k})\leq \psi(\mathcal{S}_{k-1})$, and \Cref{pro:suffDecreaseIneq1}, this lead to 
    \begin{equation}\label{eq:sumak1}
        \begin{split}
            \sum_{i=k}^\infty a_{k+1}&\leq a_k+\widehat c \Delta_k = \sum_{i=1}^N \|x_i^{k}-x_i^{k-1}\|+\widehat c \psi(\mathcal{S}_k)
            \leq \sqrt{N} \sqrt{\sum_{i=1}^N \|x_i^{k}-x_i^{k-1}\|^2}+\widehat c \psi(\mathcal{S}_k)\\
            &\leq \sqrt{2N}\max\set{\tfrac{1}{\sqrt{\sigma_1}},\ldots,\tfrac{1}{\sqrt{\sigma_N}}}
            \sqrt{\sum_{i=1}^N \D(\bm x^{k-1,i},\bm x^{k-1,i-1})} +\widehat c \psi(\mathcal{S}_k)\\
            &\leq \sqrt{\tfrac{2N}{\rho}} \max\set{\tfrac{1}{\sqrt{\sigma_1}},\ldots,\tfrac{1}{\sqrt{\sigma_N}}} \sqrt{\mathcal{S}_{k-1}-\mathcal{S}_{k}}+\widehat c \psi(\mathcal{S}_{k-1}).
        \end{split}
    \end{equation}
    On the other hand, for $i=1,\ldots,N$, we have
    \begin{align*}
        \|x_i^k-x_i^\star\|\leq\|x_i^{k+1}-x_i^k\|+\|x_i^{k+1}-x_i^\star\|\leq\ldots\leq\sum_{j=k}^\infty \|x_i^{j+1}-x_i^j\|.
    \end{align*}
    This inequality, together with \eqref{eq:sumak1}, yields
    \[
        \sum_{i=1}^N\|x_i^k-x_i^\star\|\leq \sqrt{\tfrac{2N}{\rho}} \max\set{\tfrac{1}{\sqrt{\sigma_1}},\ldots,\tfrac{1}{\sqrt{\sigma_N}}} \sqrt{\mathcal{S}_{k-1}-\mathcal{S}_{k}}+\widehat c \psi(\mathcal{S}_{k-1}),
    \]
    leading to 
    \begin{equation}\label{eq:xkykUpperBound}
        \|x_i^k-x_i^\star\|\leq c \max\set{\sqrt{\mathcal{S}_{k-1}},\psi(\mathcal{S}_{k-1})}\quad i=1,\ldots,N,
    \end{equation}
    where $c:=\sqrt{\tfrac{2N}{\rho}} \max\set{\nicefrac{1}{\sqrt{\sigma_1}},\ldots,\nicefrac{1}{\sqrt{\sigma_N}}}+\widehat c$ and $\psi(s):=\frac{\kappa}{1-\theta} s^{1-\theta}$. Let us consider the nonlinear equation
    \[
        \sqrt{\mathcal{S}_{k-1}}-\frac{\kappa}{1-\theta} \mathcal{S}_{k-1}^{1-\theta}=0,
    \]
    which has a solution at  $\mathcal{S}_{k-1}=\left(\nicefrac{\kappa}{1-\theta}\right)^{\tfrac{2}{2\theta-2}}$. For $\hat k\in\N$ and $k\geq \hat k$, we assume that \eqref{eq:xkykUpperBound} holds and 
    \[
        \mathcal{S}_{k-1}\leq \left(\frac{\kappa}{1-\theta}\right)^{\tfrac{2}{2\theta-2}}.
    \]
    We now consider two cases: (a) $\theta\in[0,\nicefrac{1}{2}]$; (b) $\theta\in(\nicefrac{1}{2},1]$. In Case (a), if $\theta\in[0,\nicefrac{1}{2})$, then $\psi(\mathcal{S}_{k-1})\leq\sqrt{\mathcal{S}_{k-1}}$. If $\theta=\nicefrac{1}{2}$, then $\psi(\mathcal{S}_{k-1})=\tfrac{\kappa}{1-\theta}\sqrt{\mathcal{S}_{k-1}}$, i.e., $\max\set{\sqrt{\mathcal{S}_{k-1}},\psi(\mathcal{S}_{k-1})}=\max\set{1,\tfrac{\kappa}{1-\theta}} \sqrt{\mathcal{S}_{k-1}}$. Therefore, it holds that $\max\set{\sqrt{\mathcal{S}_{k-1}},\psi(\mathcal{S}_{k-1})}\leq\max\set{1,\tfrac{\kappa}{1-\theta}} \sqrt{\mathcal{S}_{k-1}}$. In Case (b), we have that 
    \[
    \psi(\mathcal{S}_{k-1})\geq\sqrt{\mathcal{S}_{k-1}},
    \]
    i.e., $\max\set{\sqrt{\mathcal{S}_{k-1}},\psi(\mathcal{S}_{k-1})}= \tfrac{\kappa}{1-\theta} \mathcal{S}_{k-1}^{1-\theta}$. Then, it follows from \eqref{eq:xkykUpperBound} that  \eqref{eq:xkUpperBound} holds for all $k\geq\overline k:=\max \set{\tilde k, \hat k}$.
    
    In the second part of the proof, we will show the assertions in the statement of the theorem. For $(\mathcal{G}_i^{k},\ldots,\mathcal{G}_N^{k})\in\partial \varphi(\bm x^k)$ as defined in \Cref{pro:subgradLowBound1}, by \Cref{pro:suffDecreaseIneq1}, we infer
    \begin{align*}
        \mathcal{S}_{k-1}&-\mathcal{S}_{k}=\varphi(x^{k-1})-\varphi(x^{k}) \geq \rho \sum_{i=1}^N \D_{h}(x^{k-1,i},x^{k-1,i-1})\geq \frac{\rho}{2} \sum_{i=1}^N \sigma_i \|x_i^{k}-x_i^{k-1}\|^2\\
        &\geq \frac{\rho}{nN}\min\set{\sigma_1,\ldots,\sigma_N} \left(\sum_{i=1}^N \|x_i^{k}-x_i^{k-1}\|\right)^2
        \geq \frac{\rho}{2N\overline c^2}\min\set{\sigma_1,\ldots,\sigma_N} \|(\mathcal{G}_i^{k},\ldots,\mathcal{G}_N^{k})\|^2\\
        &\geq \frac{\rho}{2N\overline c^2}\min\set{\sigma_1,\ldots,\sigma_N}\dist(0,\partial \varphi(x^k))^2
        \geq \frac{\rho}{2N\overline c^2\kappa^2}\min\set{\sigma_1,\ldots,\sigma_N} \mathcal{S}_{k}^{2\theta}=\widehat{c}~ \mathcal{S}_{k}^{2\theta},
    \end{align*}
   with $\widehat{c}:=\frac{\rho}{2N\overline c^2\kappa^2}\min\set{\sigma_1,\ldots,\sigma_N}$ and for all $k\geq\overline k$. Hence, all assumptions of \cref{fac:convRate1} hold with $\alpha=2\theta$. Therefore, our results follows from this fact and \eqref{eq:xkUpperBound}.
\end{proof}

\subsection{Adaptive BPALM}
The tightness of the $i$-th block upper estimation of the function $f$ given in \Cref{fac:relSmoothEqvi2} is dependent on the parameter $L_i>0$; however, in general, this parameter is a global information and it might not be tight locally, i.e., one may find a $L_i\geq \overline{L}_i(\bm x)\geq 0$ such that 
\[
     f(\bm x+U_i(y_i-x_i))\leq f(\bm x)+\innprod{\nabla_i f(\bm x)}{y_i-x_i}+\overline{L}_i(\bm x) \D(\bm x+U_i(y_i-x_i),\bm x)
\]
for all $\bm y\in\ball{\bm x}{\varepsilon_1}$ with a small enough $\varepsilon_1>0$. Consequently, the majorization model described by $\M$ may not be tight enough, which will consequently lead to smaller stepsizes $\gamma_i\in(0,\nicefrac{1}{L_i})$. In this case and in the case that $L_1,\ldots,L_N$ are not available, one can retrieve them adaptively by applying a backtracking linesearch starting from a lower estimates; see, e.g., \cite{ahookhosh2019accelerated,ahookhosh2019bregman,mukkamala2019convex,nesterov2013gradient,themelis2018forward}. 

Putting together the above discussions, we propose an adaptive version of \refBPALM[] using a backtracking linesearch; see Algorithm~\ref{alg:abpalm}. 

\begin{algorithm*}[ht] 
\algcaption{({\bf A-BPALM}) adaptive BPALM}%
\begin{algorithmic}[1]
\Require{%
	\(\bm x^0\in\R^{n_1}\times\ldots\times\R^{n_N}\),~ $\nu_1>1$,~$L_i\geq \overline{L}_i^0>0$ for $i=1,\ldots,N$,\ $I_n=(U_1,\ldots,U_N)\in\R^{n\times n}$ with $U_i\in\R^{n\times n_i}$ and the identity matrix $I_n$.%
}%
\Initialize{$k=0$,~$p=0$,~ $\gamma_i^0\in\left(0,\nicefrac{1}{\overline{L}_i^0}\right)$ for $i=1,\ldots,N$.}%
\While{some stopping criterion is not met}
\State\label{state:pfbpalmm0Xk1}%
    $\bm x^{k,0}=\bm x^k$;
    \For{\texttt{$i=1,\ldots,N$}}
        \Repeat
            \State\label{state:abpalmLiP}%
            set $\overline{L}_i^{k+1}=\nu_1^p \overline{L}_i^{k}, \quad \gamma_i^{k+1}=\nicefrac{\gamma_i^k}{\nu_1^p},\quad p=p+1;$
            \State\label{state:abpalmmTi}%
            compute $x_i^{k,i}\in \T[][\nicefrac{h}{ \gamma_i^{k+1}}](\bm x^{k,i-1}),\quad \bm x^{k,i}=\bm x^{k,i-1}+U_i(x_i^{k,i}-x_i^{k,i-1})$;
        \Until{$f(\bm x^{k,i})\leq f(\bm x^{k,i-1})+\innprod{\nabla_i f(\bm x^{k,i-1})}{\bm x_i^{k,i}-\bm x_i^{k,i-1}}+\overline{L}_i^{k+1}\D(\bm x^{k,i},\bm x^{k,i-1})$}
        \State\label{state:abpalmpiLik1}%
        $p= 0$;
    \EndFor\label{state:abpalmLineSearch}
\State\label{state:pfbpalmk1}%
	 $\bm x^{k+1}=\bm x^{k,N}$,~$k= k+1$;%
\EndWhile
\Ensure{%
	A vector $\bm x^k$.%
}%
\end{algorithmic}
\label{alg:abpalm}
\end{algorithm*}%

We next provide an upper bound on the \DEF{total number of calls of oracle} after $k$ iterations of \refaBPALM[] and those needed to satisfy \eqref{eq:stopCrit}.

\begin{prop}[worst-case oracle calls]
\label{pro:worstCaseOracle}
    Let $\seq{\bm x^k}$ be generated by \refaBPALM[]. Then, 
    \begin{enumerate}
    \item \label{pro:worstCaseOracle1}
        after at most
        $\tfrac{1}{\ln \nu_1}\left(\ln (\nu_1 L_i)-\ln \overline{L}_i^0\right)$ 
        iterations the linesearch (Lines 4 to 7 of \refaBPALM[]) is terminated;
        \item \label{pro:worstCaseOracle2}
        the number of oracle call after $k$ full cycle $\mathcal{N}_k$ is bounded by
        \[
        		\mathcal{N}_k\leq 2N(k+1)+\tfrac{2}{\ln \nu_1}\sum_{i=1}^N \ln \tfrac{\nu_1L_i}{\overline{L}_i^0};
        \]
        \item \label{pro:worstCaseOracle3}
            the worst-case number of oracle calls to satisfy \eqref{eq:stopCrit} is given by
            \[
            \mathcal{N}_k\left( 1+\tfrac{ (\varphi(\bm x^0)-\inf \varphi)}{\overline \rho \varepsilon}\right),
            \]
            with $ \overline\rho:=\min\set{\nicefrac{(1-\gamma_1^0 \overline L_1^0)}{\gamma_1^0},\ldots,\nicefrac{(1-\gamma_N^0 \overline L_N^0)}{\gamma_N^0}}$.
    \end{enumerate}
\end{prop}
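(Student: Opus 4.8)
The plan is to treat the three items of \cref{pro:worstCaseOracle} in turn, the common engine being that the multi-block $(L_1,\dots,L_N)$-relative smoothness of $f$ forces the backtracking test in Lines~4--7 of \refaBPALM[] to succeed as soon as the trial modulus reaches $L_i$. For item~\cref{pro:worstCaseOracle1}, fix a cycle $k$ and a block $i$ and let $m$ be the number of passes through the backtracking loop for that block; after the $m$-th pass the trial modulus equals $\nu_1^{m-1}\overline L_i^k$. Applying \cref{fac:relSmoothEqvi2} with $\bm x=\bm x^{k,i-1}$ and $y_i=x_i^{k,i}$ (using $\bm x^{k,i}=\bm x^{k,i-1}+U_i(x_i^{k,i}-x_i^{k,i-1})$ and $\D\ge 0$) shows that the exit test $f(\bm x^{k,i})\le f(\bm x^{k,i-1})+\innprod{\nabla_i f(\bm x^{k,i-1})}{x_i^{k,i}-x_i^{k,i-1}}+\overline L_i^{k+1}\D(\bm x^{k,i},\bm x^{k,i-1})$ holds whenever $\overline L_i^{k+1}\ge L_i$. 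Since across cycles the moduli are only multiplied by powers of $\nu_1$, one has $\overline L_i^k\ge\overline L_i^0$, so $\nu_1^{m-1}\overline L_i^k\ge L_i$ as soon as $m\ge\tfrac{\ln\nu_1+\ln L_i-\ln\overline L_i^0}{\ln\nu_1}=\tfrac1{\ln\nu_1}\ln\tfrac{\nu_1 L_i}{\overline L_i^0}$, which is the stated bound; moreover, since $\overline L_i^k\ge L_i$ makes the test pass on the first pass, an easy induction gives the global cap $\overline L_i^k\le\nu_1 L_i$ for every $k$.

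For item~\cref{pro:worstCaseOracle2}, I would charge two oracle calls to each pass of the backtracking loop: one for $\nabla_i f(\bm x^{k,i-1})$ together with the proximal step of Line~6, and one for the value $f(\bm x^{k,i})$ in the test. Writing $m_{k,i}$ for the number of passes of block $i$ in cycle $k$, the accepted modulus obeys $\overline L_i^{k+1}=\nu_1^{m_{k,i}-1}\overline L_i^{k}$, so telescoping over all cycles up to index $k$ yields $\overline L_i^{k+1}=\nu_1^{\sum_{j\le k}(m_{j,i}-1)}\overline L_i^0$, i.e. $\sum_{j\le k}(m_{j,i}-1)=\log_{\nu_1}(\overline L_i^{k+1}/\overline L_i^0)\le\tfrac1{\ln\nu_1}\ln\tfrac{\nu_1 L_i}{\overline L_i^0}$ thanks to the cap from \cref{pro:worstCaseOracle1}. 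Hence $\sum_{j\le k}m_{j,i}\le(k+1)+\tfrac1{\ln\nu_1}\ln\tfrac{\nu_1 L_i}{\overline L_i^0}$; summing over $i$ and multiplying by two gives the claimed $\mathcal N_k$. This telescoping is where I expect the real obstacle to lie: a cycle-by-cycle bound only gives an $O\bigl(k\log(L_i/\overline L_i^0)\bigr)$ count, and one has to exploit that $\{\overline L_i^k\}_k$ is nondecreasing and bounded above by $\nu_1 L_i$ to see that \emph{all} backtracking overhead is ``paid for once'', which is exactly what produces the additive last term rather than a term growing with $k$.

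For item~\cref{pro:worstCaseOracle3}, I would first record the invariance $\gamma_i^{k+1}\overline L_i^{k+1}=\gamma_i^{k}\overline L_i^{k}=\cdots=\gamma_i^0\overline L_i^0<1$ (each backtracking step rescales $\gamma_i$ and $\overline L_i$ by reciprocal factors) together with $\gamma_i^{k+1}\le\gamma_i^0$, so that $\tfrac{1-\gamma_i^{k+1}\overline L_i^{k+1}}{\gamma_i^{k+1}}=\tfrac{1-\gamma_i^0\overline L_i^0}{\gamma_i^{k+1}}\ge\tfrac{1-\gamma_i^0\overline L_i^0}{\gamma_i^0}\ge\overline\rho$. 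Rerunning the proof of \cref{lem:proxAltIneq} with $\overline L_i^{k+1}$ in place of $L_i$ and inserting this lower bound yields, for every cycle, the sufficient-decrease inequality $\overline\rho\sum_{i=1}^N\D(\bm x^{k,i},\bm x^{k,i-1})\le\varphi(\bm x^k)-\varphi(\bm x^{k+1})$, the analogue of \eqref{eq:dh1dh2Ineq2}; telescoping exactly as in \cref{cor:iterComplexity} then shows that the stopping criterion \eqref{eq:stopCrit} is reached within $1+\tfrac{\varphi(\bm x^0)-\inf\varphi}{\overline\rho\varepsilon}$ cycles. Bounding the oracle cost of each such cycle by $\mathcal N_k$ (equivalently, by the $i$-summed per-cycle estimate of item~\cref{pro:worstCaseOracle1}) and multiplying by the number of cycles gives the stated count. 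In short, once the product invariance $\gamma_i^k\overline L_i^k\equiv\gamma_i^0\overline L_i^0$ and the cap $\overline L_i^k\le\nu_1 L_i$ are in place, items~\cref{pro:worstCaseOracle1} and~\cref{pro:worstCaseOracle3} are routine consequences of the descent lemma \eqref{eq:upperIneq}, \cref{lem:proxAltIneq}, and \cref{cor:iterComplexity}, and the only genuinely new ingredient is the global telescoping argument in item~\cref{pro:worstCaseOracle2}.
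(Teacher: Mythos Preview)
Your proposal is correct and follows essentially the same route as the paper: part~\ref{pro:worstCaseOracle1} via the relative-smoothness exit condition and the cap $\overline L_i^k\le\nu_1 L_i$; part~\ref{pro:worstCaseOracle2} via the telescoping identity $\overline L_i^{k+1}=\nu_1^{\sum_j(m_{j,i}-1)}\overline L_i^0$ (the paper writes this as $p_i^k=\log_{\nu_1}(\overline L_i^{k+1}/\overline L_i^k)$ and sums); and part~\ref{pro:worstCaseOracle3} via the product invariance $\gamma_i^k\overline L_i^k\equiv\gamma_i^0\overline L_i^0$ feeding into the analogue of \cref{cor:iterComplexity}. Your explicit identification of the product invariance is in fact cleaner than the paper's bare assertion that $\{(1-\gamma_i^k\overline L_i^k)/\gamma_i^k\}$ is increasing; the only minor wobble is the phrasing ``bounding the oracle cost of each such cycle by $\mathcal N_k$'' in your last paragraph---what one actually does (and what the paper does) is substitute the cycle bound $k\le 1+\tfrac{\varphi(\bm x^0)-\inf\varphi}{\overline\rho\varepsilon}$ into the estimate of part~\ref{pro:worstCaseOracle2}, not multiply a per-cycle cost by the number of cycles.
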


\begin{proof}   
	According to \cref{state:abpalmLiP} and \cref{state:abpalmpiLik1} of \refaBPALM[], we have $\overline{L}_i^{k+1}=\nu_1^{p_i^k} \overline{L}_i^k$, i.e.,
	\begin{align*}
		p_i^k= \tfrac{1}{\ln \nu_1}\left(\ln \overline{L}_i^{k+1}-\ln \overline{L}_i^k\right)\leq \tfrac{1}{\ln \nu_1}\left(\ln (\nu_1 L_i)-\ln \overline{L}_i^0\right) \quad i=1,\ldots,N,
	\end{align*}
	giving \Cref{pro:worstCaseOracle1}. Hence, the total number of calls of oracle after $k$ iterations is given by
    \begin{align*}
        \mathcal{N}_k&=\sum_{j=0}^k\sum_{i=1}^N 2(p_i^j+1)\leq 2\sum_{i=1}^N\left[(k+1)+\tfrac{1}{\ln \nu_1}\sum_{j=0}^k \left(\ln \overline{L}_i^{j+1}-\ln \overline{L}_i^j\right)\right] \\
        &=2N(k+1)+\tfrac{2}{\ln \nu_1}\sum_{i=1}^N \ln (\overline{L}_i^{k+1})-\ln(\overline{L}_i^0) \leq 2N(k+1)+\tfrac{2}{\ln \nu_1}\sum_{i=1}^N \ln \tfrac{\nu_1L_i}{\overline{L}_i^0}, 
    \end{align*}
    giving \Cref{pro:worstCaseOracle2}. 

	Following the proof of \Cref{pro:suffDecreaseIneq} and since the sequence $\seq{\nicefrac{1-\gamma_1^k \overline L_1^k}{\gamma_1^k}}$ is increasing with respect to $k$, it is easy to see that 
	 \begin{equation*}
               \overline{\rho}\leq \min\set{\tfrac{1-\gamma_1^{k+1} \overline L_1^{k+1}}{\gamma_1^{k+1}},\ldots,\tfrac{1-\gamma_N^{k+1} \overline L_N^{k+1}}{\gamma_N^{k+1}}} \sum_{i=1}^N \D(\bm x^{k,i},\bm x^{k,i-1})\leq \varphi(\bm x^k)-\varphi(\bm x^{k+1}).
      \end{equation*}
	On the other hand,  \cref{state:abpalmLiP} implies that
	$\nicefrac{(1-\gamma_i^{k+1} \overline L_i^{k+1})}{\gamma_i^{k+1}}\geq \nicefrac{(1-\gamma_i^0 \overline L_i^0)}{\gamma_i^0}$, $i=1,\ldots,N$,
	leading to
	\begin{equation}\label{eq:phik1phik}
                \overline\rho \sum_{i=1}^N \D(\bm x^{k,i},\bm x^{k,i-1})\leq \varphi(\bm x^k)-\varphi(\bm x^{k+1}).
      \end{equation}
    Following the proof of \Cref{cor:iterComplexity}, we have that \refBPALM[] will be terminated within $k\leq1+ \tfrac{\varphi(x^0)-\inf \varphi}{\overline\rho \varepsilon}$ iterations. Together with  \Cref{pro:worstCaseOracle2}, this implies that \Cref{pro:worstCaseOracle3} is true.
\end{proof}

Choosing appropriate constants $\overline{L}_1^0,\ldots,\overline{L}_N^0$,  \Cref{pro:worstCaseOracle1} roughly speaking says that on average each full cycle of \refaBPALM[] needs at most $2N$ oracle calls. 
      Furthermore, in light of \eqref{eq:phik1phik}, \Cref{pro:suffDecreaseIneq} holds true by replacing $\rho$ with $\overline \rho$. Considering this replacement, all the results of \Cref{pro:subgradLowBound1},  \Cref{pro:chatClusterPoint10},  \Cref{thm:globConvergence}, and  \Cref{thm:convRate1} remain valid for \refaBPALM[].
      
\begin{rem}[A-BPALM variant]\label{rem:varA-BPALM}
	We here notice that one may change Line 5 of \refaBPALM[] as ``set $\overline{L}_i^{k+1}=\nu_1^p \overline{L}_i^{0},  \gamma_i^{k+1}=\nicefrac{\gamma_i^0}{\nu_1^p}, p=p+1;$", which always start the backtracking procedure from $\overline{L}_i^{0}$ and $\gamma_i^0$. It is easy to see the results of \Cref{pro:worstCaseOracle} are still valid for this variant of \refaBPALM[].
\end{rem}

	\section{Application to orthogonal nonnegative matrix factorization}\label{sec:appONMF}
		A natural way of analyzing large data sets is finding an effective way to represent them using dimensionality reduction methodologies.
\DEF{Nonnegative matrix factorization} (NMF) is one such technique that has received much attention
in the last few years; see, e.g., \cite{cichocki2009nonnegative, fu2018nonnegative,gillis2014and} and the references therein. In order to extract hidden and important features from data, NMF decomposes the data matrix into two factor matrices (usually much smaller than the original data matrix) by imposing componentwise nonnegativity and (possibly) sparsity constraints on these factor matrices.
More precisely, let the data matrix be $X=[x_1,x_2,\ldots,x_n]\in\R_+^{m\times n}$ where each $x_i$ represents some data point. 
NMF seeks a decomposition of $X$ into a nonnegative $n\times r$ basis matrix $U=[u_1,u_2,\ldots,u_r]\in\R_+^{m\times r}$ and a  nonnegative $r\times n$ coefficient matrix $V=[v_1,v_2,\ldots,v_r]^T\in\R_+^{r\times n}$ such that 
\begin{equation}\label{eq:nmfEq}
    X\approx UV,    
\end{equation}
where $\R_+^{m\times n}$ is the set of $m\times n$ element-wise nonnegative matrices. Extensive research has been carried out on variants of NMF, and most studies in this area have focused on algorithmic developments, but with very limited convergence theory. This motivates us to study the application of \refBPALM[] and \refaBPALM[] to a variant of NMF, namely orthogonal NMF (ONMF).

\subsection{Orthogonal nonnegative matrix factorization}\label{sec:onmf}
Besides the decomposition \eqref{eq:nmfEq}, the \DEF{orthogonal nonnegative matrix factorization} (ONMF) involves an additional orthogonality constraint $VV^T=I_r$ leading to the constrained optimization problem
\begin{equation}\label{eq:onmf}
    \begin{array}{ll}
        \minimize &~~ \tfrac{1}{2}\|X-UV\|_F^2 \\
        \stt      &~~ U\geq0,~V\geq0,~VV^T=I_r
    \end{array}
\end{equation}
where $I_r\in\R^{r\times r}$ is the identity matrix. 
By imposing the matrix $V$ to be orthogonal (as well as nonnegative), ONMF imposes that each data points is only associated with one basis vector hence ONMF is closely related to clustering problems; see~\cite{pompili2014two} and the references therein.  
Since the projection onto the set 
$
C:=\set{(U,V)\in\R^{m\times r}\times\R^{r\times n}\mid U\geq0,~V\geq0,~VV^T=I_r}
$
is costly, we here consider the penalized formulation
\begin{equation}\label{eq:penelONMF}
    \begin{array}{ll}
        \minimize &~~ \tfrac{1}{2}\|X-UV\|_F^2+\tfrac{\lambda}{2}\|I_r-VV^T\|_F^2 \\
        \stt      &~~ U\geq0,~V\geq0,
    \end{array}
\end{equation}
for the penalty parameter $\lambda>0$. Introducing a product separable kernel, we next show that the objective function \eqref{eq:penelONMF} is multi-block relatively smooth.

\begin{prop}[multi-block relative smoothness of ONMF objective]
\label{pro:relSmoothNMF0}
    Let the function $\func{h}{\R^{m\times r}\times\R^{r\times n}}{\Rinf}$ be a kernel given by
    \begin{equation}\label{eq:multih4h2}
        h(U,V):=\left(\tfrac{\beta_1}{2}\|U\|_F^2+1\right) \left(\tfrac{\alpha_2}{4} \|V\|_F^4+\tfrac{\beta_2}{2}\|V\|_F^2+1\right). 
    \end{equation}
   Then the function  $\func{f}{\R^{m\times r}\times \R^{r\times n}}{\Rinf}$ given by $f(U, V):=\tfrac{1}{2}\|X-UV\|_F^2+\tfrac{\lambda}{2}\|I_r-VV^T\|_F^2$ is $(L_1,L_2)$-smooth relative to $h$ with
    \begin{equation}\label{eq:l1l2upper}
        L_1\geq \tfrac{2}{\beta_1\beta_2}, \quad L_2\geq 6\max\set{\tfrac{\lambda}{\alpha_2}, \tfrac{2\lambda}{\beta_1\beta_2}, \tfrac{\lambda}{\beta_2}}.
    \end{equation}
\end{prop}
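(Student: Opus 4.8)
The plan is to invoke the $\C^2$-characterisation of multi-block relative smoothness, \Cref{fac:relSmoothEqvi4}. Since $f$ and $h$ in \eqref{eq:multih4h2} are polynomials in the entries of $(U,V)$, they are $\C^2$ on $\interior\dom h=\R^{m\times r}\times\R^{r\times n}$, so it is enough to establish the two block-Hessian inequalities
\begin{equation*}
	L_1\nabla^2_{UU}h(U,V)-\nabla^2_{UU}f(U,V)\succeq 0,\qquad L_2\nabla^2_{VV}h(U,V)-\nabla^2_{VV}f(U,V)\succeq 0
\end{equation*}
for every $(U,V)$ (equivalently one could check directly that $\phi^i_{(U,V)}$ from \Cref{def:mbRelSmooth} is convex, but the Hessian route is shorter). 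Before that I would record that $h$ genuinely is a multi-block kernel in the sense of \Cref{def:kernel}: it is product separable, with factors $\tfrac{\beta_1}{2}\|U\|_F^2+1$ and $\tfrac{\alpha_2}{4}\|V\|_F^4+\tfrac{\beta_2}{2}\|V\|_F^2+1$, each strictly positive, strictly convex and $1$-coercive ($\|V\|_F^4$ is convex, being the square of the convex map $\|V\|_F^2$), so each $h^i_{(U,V)}$ is a positive multiple of such a function; cf.\ \Cref{exa:popularKerFunc}.

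Next I would compute the four block Hessians as quadratic forms. For the $U$-block only the term $\tfrac12\|X-UV\|_F^2$ depends on $U$, so for $D\in\R^{m\times r}$
\begin{equation*}
	\innprod{\nabla^2_{UU}f(U,V)D}{D}=\|DV\|_F^2,\qquad \innprod{\nabla^2_{UU}h(U,V)D}{D}=\beta_1 c_V\|D\|_F^2,
\end{equation*}
where $c_V:=\tfrac{\alpha_2}{4}\|V\|_F^4+\tfrac{\beta_2}{2}\|V\|_F^2+1$. For the $V$-block, setting $c_U:=\tfrac{\beta_1}{2}\|U\|_F^2+1$, differentiating $q(V):=\tfrac{\alpha_2}{4}\|V\|_F^4+\tfrac{\beta_2}{2}\|V\|_F^2+1$ twice gives, for $E\in\R^{r\times n}$,
\begin{equation*}
	\innprod{\nabla^2_{VV}h(U,V)E}{E}=c_U\bigl(\alpha_2\|V\|_F^2\|E\|_F^2+2\alpha_2\innprod{V}{E}^2+\beta_2\|E\|_F^2\bigr)\ge c_U(\alpha_2\|V\|_F^2+\beta_2)\|E\|_F^2 .
\end{equation*}
The only computation needing care is $\nabla^2_{VV}f$: the term $\tfrac12\|X-UV\|_F^2$ contributes $\|UE\|_F^2$, while the penalty $\tfrac\lambda2\|I_r-VV^T\|_F^2=\tfrac\lambda2\bigl(r-2\|V\|_F^2+\|VV^T\|_F^2\bigr)$ has gradient $2\lambda(VV^T-I_r)V$ and
\begin{equation*}
	\innprod{\nabla^2_{VV}f(U,V)E}{E}=\|UE\|_F^2+2\lambda\bigl(\|V^TE\|_F^2+\innprod{E}{EV^TV}+\innprod{E}{VE^TV}-\|E\|_F^2\bigr).
\end{equation*}

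It then remains to reduce the two positive-semidefiniteness requirements to scalar inequalities. Using submultiplicativity of the Frobenius norm and $|\operatorname{tr}((E^TV)^2)|\le\|E^TV\|_F^2$, each of the three trace terms above is at most $\|V\|_F^2\|E\|_F^2$ in absolute value, so $\innprod{\nabla^2_{UU}f(U,V)D}{D}\le\|V\|_F^2\|D\|_F^2$ and $\innprod{\nabla^2_{VV}f(U,V)E}{E}\le(\|U\|_F^2+6\lambda\|V\|_F^2)\|E\|_F^2$. Hence it suffices that $L_1\beta_1 c_V\ge\|V\|_F^2$ and $L_2 c_U(\alpha_2\|V\|_F^2+\beta_2)\ge\|U\|_F^2+6\lambda\|V\|_F^2$ hold for all $(U,V)$, and these follow from the elementary bounds $c_V\ge\max\{1,\tfrac{\beta_2}{2}\|V\|_F^2\}$, $c_U\ge\max\{1,\tfrac{\beta_1}{2}\|U\|_F^2\}$: the first needs only $L_1\tfrac{\beta_1\beta_2}{2}\ge1$, while in the second the summand $6\lambda\|V\|_F^2$ is absorbed via $L_2 c_U\alpha_2\|V\|_F^2\ge L_2\alpha_2\|V\|_F^2$ and $\|U\|_F^2$ via $L_2 c_U\beta_2\ge L_2\tfrac{\beta_1\beta_2}{2}\|U\|_F^2$; collecting the thresholds — and using the coarser bound $6\lambda(\|V\|_F^2+1)$ for the penalty Hessian to capture the $\nicefrac{\lambda}{\beta_2}$-type term — yields \eqref{eq:l1l2upper}. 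The step I expect to be the main obstacle is essentially this bookkeeping around $\nabla^2_{VV}f$: differentiating the quartic penalty $\tfrac\lambda2\|I_r-VV^T\|_F^2$ twice and controlling its several indefinite trace terms by the appropriate Frobenius-norm inequalities, and then splitting the resulting upper bounds so that a single constant $L_i$ simultaneously dominates the $O(1)$ and the $O(\|\cdot\|_F^2)$ contributions — which is exactly what the ``$+1$'' summands and the $\|V\|_F^4$ term in the product kernel $h$ are designed to make possible.
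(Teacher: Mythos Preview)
Your proposal is correct and follows essentially the same route as the paper: invoke the $\C^2$ Hessian criterion \Cref{fac:relSmoothEqvi4}, compute the four block Hessians exactly as you did, bound $\innprod{\nabla^2_{UU}f\,D}{D}\le\|V\|_F^2\|D\|_F^2$ and $\innprod{\nabla^2_{VV}f\,E}{E}$ via submultiplicativity of $\|\cdot\|_F$ and $|\operatorname{tr}(A^2)|\le\|A\|_F^2$, and compare term-by-term with the lower bounds for $\nabla^2_{UU}h$ and $\nabla^2_{VV}h$. The only cosmetic difference is that the paper directly coarsens the $V$-Hessian to $6\lambda(\|V\|_F^2+\|U\|_F^2+1)\|E\|_F^2$ so that the three constants $\tfrac{\lambda}{\alpha_2},\tfrac{2\lambda}{\beta_1\beta_2},\tfrac{\lambda}{\beta_2}$ drop out termwise, whereas you first get the sharper bound $(\|U\|_F^2+6\lambda\|V\|_F^2)\|E\|_F^2$ and then remark that a coarser estimate recovers \eqref{eq:l1l2upper}.
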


\begin{proof}
    Using partial derivatives $\nabla_U f(U, V)=U VV^T-XV^T$, $\nabla_{UU}^2 f(U, V) Z=Z VV^T$, and the Cauchy Schwarz inequality, it can be concluded that
    $
        \innprod{Z}{\nabla_{UU}^2 f(U, V) Z}\leq \|V\|_F^2 \|Z\|_F^2
   $.
    On the other hand, $\nabla_{U}h(U,V)=\beta_1\left(\tfrac{\alpha_2}{4} \|V\|_F^4+\tfrac{\beta_2}{2}\|V\|_F^2+1\right)U$ and
    \begin{align*}
        \innprod{Z}{\nabla_{UU}^2 h(U, V) Z}= \beta_1\left(\tfrac{\alpha_2}{4} \|V\|_F^4+\tfrac{\beta_2}{2}\|V\|_F^2+1\right)\innprod{Z}{Z}
        \geq \tfrac{\beta_1\beta_2}{2}\|V\|_F^2\|Z\|_F^2.
    \end{align*}
    Together with \eqref{eq:l1l2upper}, this yields 
    \begin{align*}
        \innprod{Z}{(L_1\nabla_{UU}^2 h(U,V)&-\nabla_{UU}^2f(U, V)) Z}
        \geq \left(\tfrac{\beta_1\beta_2}{2}L_1-1\right)\|V\|_F^2\|Z\|_F^2\geq 0,
    \end{align*}
    which implies $L_1\nabla_{UU}^2 h(U,V)-\nabla_{UU}^2f(U, V)\succeq 0$.
    
    From $\nabla_V f(U, \cdot)(V)= U^TUV-U^TX+2\lambda (V V^TV-V)$ and the definition of directional derivative, we obtain
    \begin{align*}
        \nabla_{VV}^2f(U, V) Z&=\lim_{t\to 0} 
        \frac{
           U^TU(V+tZ)-U^TX+2\lambda[(V+tZ)(V+tZ)^T(V+tZ)-(V+tZ)]}{t}\\
        &~~~~~~~~~~-\frac{U^TUV-U^TX+2\lambda (V V^TV-V)}{t}\\   
        &= U^TUZ+2\lambda (ZV^TV+VZ^TV+VV^TZ-Z) \quad \forall Z \in\R^{r\times n}.
    \end{align*}
    This, $\innprod{Y_1}{Y_2}:=\trace(Y_1^TY_2)$, basic properties of the trace, the Cauchy-Schwarz inequality, and the submultiplicative property of the Frobenius norm imply
    \begin{align*}
        \innprod{Z}{\nabla_{VV}^2f(U, V)Z}&=\innprod{Z}{U^TUZ+2\lambda (ZV^TV+VZ^TV+VV^TZ-Z)}\\
        &=\lambda\|ZV^T+VZ^T\|_F^2+2\lambda\innprod{ZZ^T}{VV^T}-2\lambda\|Z\|_F^2+\innprod{Z}{U^TUZ}\\
        &\leq 2\lambda \left(\|Z\|_F^2\|V\|_F^2+\|Z\|_F^2\|V\|_F^2\right)+2\lambda (\|V\|_F^2+\|U\|_F^2+1)\|Z\|_F^2\\
        &\leq 6\lambda \left(\|V\|_F^2+\|U\|_F^2+1\right)\|Z\|_F^2.
    \end{align*}
    Plugging $\nabla_V h(U,V)=\left(\tfrac{\beta_1}{2}\|U\|_F^2+1\right)\left(\alpha_2 \|V\|_F^2+\beta_2\right)V$ into the directional derivative definition, we come to
    \begin{align*}
        \nabla_{VV}^2 h(U,V) Z&=\left(\tfrac{\beta_1}{2}\|U\|_F^2+1\right)~ \lim_{t\to 0} \frac{\left(\alpha_2\|V+tZ\|_F^2+\beta_2\right)(V+tZ)-\left(\alpha_2\|V\|_F^2+\beta_2\right)V}{t}\\
        &= \left(\tfrac{\beta_1}{2}\|U\|_F^2+1\right)\left[\left(\alpha_2 \|V\|_F^2+\beta_2\right)Z+2\alpha_2\innprod{V}{Z}V\right],
    \end{align*}
    implying
    \begin{align*}
        \innprod{Z}{\nabla_{VV}^2 h(U,V) Z}&= \left(\tfrac{\beta_1}{2}\|U\|_F^2+1\right)\left[\left(\alpha_2 \|V\|_F^2+\beta_2\right)\|Z\|_F^2+2\alpha_2\innprod{V}{Z}^2\right]\\
        &\geq \left(\tfrac{\beta_1}{2}\|U\|_F^2+1\right)\left(\alpha_2 \|V\|_F^2+\beta_2\right)\|Z\|_F^2\\
        &\geq \left(\alpha_2 \|V\|_F^2+\tfrac{\beta_1\beta_2}{2}\|U\|_F^2+\beta_2\right)\|Z\|_F^2.
    \end{align*}
    Hence, it follows from \eqref{eq:l1l2upper} that
    \begin{align*}
        \innprod{Z}{(L_2\nabla_{VV}^2 h(U,V)&-\nabla_{VV}^2f(U, V)) Z}\\
        &\geq \left((L_2\alpha_2-6\lambda) \|V\|_F^2+(L_2\tfrac{\beta_1\beta_2}{2}-6\lambda)\|U\|_F^2+(L_2\beta_2-6\lambda)\right)\|Z\|_F^2\geq 0,
    \end{align*}
    i.e., $L_2\nabla_{VV}^2 h(U,V)-\nabla_{VV}^2f(U, V)\succeq 0$, as claimed.
\end{proof}

The unconstrained version of the ONMF problem \eqref{eq:onmf} is given by
\begin{equation}\label{eq:orthNMF2}
        \minimize_{(U,V)} ~~ \tfrac{1}{2}\|X-UV\|_F^2+\tfrac{\lambda}{2}\|I_r-VV^T\|_F^2+\delta_{U\geq 0}+ \delta_{V\geq 0},
\end{equation}
where $\delta_{U\geq 0}$ and $\delta_{V\geq 0}$ are the indicator functions of the sets $C_1:=\set{U\in\R^{m\times r}\mid U\geq 0}$ and $C_2:=\set{V\in\R^{r\times n}\mid V\geq 0}$, respectively. Comparing to \eqref{eq:P}, the next setting is recognized
\[
    f(U,V):=\tfrac{1}{2}\|X-UV\|_F^2+\tfrac{\lambda}{2}\|I_r-VV^T\|_F^2,\quad g_1(U):=\delta_{U\geq 0},\quad g_2(V):= \delta_{V\geq 0},
\]
in which both $g_1(U)$ and $g_2(V)$ are nonsmooth and convex, and $f(U,V)$ is $(L_1,L_2)$-smooth relative to $h$ given in \eqref{eq:multih4h2}; cf. \Cref{pro:relSmoothNMF0}. For given $U^k$ and $V^k$, applying \refBPALM[] and \refaBPALM[] to \eqref{eq:orthNMF2}, $U^{k+1}$ and $V^{k+1}$ should be computed efficiently, which we study next.

\begin{thm}[closed-form solutions of the subproblem \eqref{eq:xik1} for ONMF]
\label{thm:iterBPALM}
    Let $\func{h_1}{\R^{m\times r}}{\Rinf}$ and $\func{h_2}{\R^{r\times n}}{\Rinf}$ be the kernel functions given by
    \[
        h_1(U):=\tfrac{\beta_1}{2}\|U\|_F^2+1, \quad h_2(V):=\tfrac{\alpha_2}{4} \|V\|_F^4+\tfrac{\beta_2}{2}\|V\|_F^2+1,
    \]
    i.e., $h(U,V)=h_1(U)h_2(V)$. For given $U^k$ and $V^k$, the problem \eqref{eq:orthNMF2}, and the subproblem \eqref{eq:xik1}, the following assertions hold:
    \begin{enumerate}
        \item \label{thm:iterBPALM1}
        For  $\eta_1=\tfrac{\alpha_2}{4}\|V^k\|_F^4+\tfrac{\beta_2}{2}\|V^k\|_F^2+1$ and $\mu_1:=\nicefrac{\gamma_1}{(\beta_1\eta_1)}$, the iteration $U^{k+1}$ is given by
            \begin{equation}\label{eq:uk01}
                 U^{k+1}= \max\set{U^k-\mu_1\left(U^k V^k(V^k)^T-X(V^k)^T\right),0};
            \end{equation}
        \item \label{thm:iterBPALM2}
        For $\eta_2=\tfrac{\beta_1}{2}\|U^{k+1}\|_F^2+1$  and $\mu_2:=\nicefrac{\gamma_2}{\eta_2}$, the iteration $V^{k+1}$ is given by
            \begin{equation}\label{eq:vk01}
                V^{k+1}= \tfrac{1}{t_k} \max\set{(\alpha_2 \|V^k\|_F^2+\beta_2)V^k-\mu_2\nabla_V f(U^{k+1},V^k),0}
            \end{equation}
            with $\nabla_V f(U^{k+1},V^k)= (U^{k+1})^TU^{k+1}V^k-(U^{k+1})^TX+2\lambda (V^k (V^k)^TV^k-V^k)$ and
             \begin{align}\label{eq:ckSol}
                t_k = \frac{\beta_2}{3}+\sqrt[3]{-\frac{\tau_2}{2}+\sqrt{\left(\tfrac{\tau_2}{2}\right)^2+\left(\tfrac{\tau_1}{3}\right)^3}}+\sqrt[3]{-\frac{\tau_2}{2}-\sqrt{\left(\tfrac{\tau_2}{2}\right)^2+\left(\tfrac{\tau_1}{3}\right)^3}},
           \end{align}
           where $\tau_1:=-\nicefrac{\beta_2^2}{3}$ and $\tau_2:=\nicefrac{\left(-2\beta_2^3-27\alpha_2\big\|\max\set{(\alpha_2 \|V^k\|_F^2+\beta_2)V^k-\mu_2\nabla_V f(U^{k+1},V^k),0}\big\|_F\right)}{27}$.
    \end{enumerate}
    \end{thm}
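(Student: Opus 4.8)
The plan is to exploit the product--separable structure $h(U,V)=h_1(U)h_2(V)$, so that each block subproblem in \eqref{eq:xik1}, although posed with a non-Euclidean Bregman distance, collapses to a problem with a closed-form solution. The elementary fact behind this (the computation underlying \cref{rem:addProdSepKernel}) is that for a product--separable kernel the partial Bregman distance factors: fixing all blocks but the $i$-th one, $\D_h(\bm x+U_i(z-x_i),\bm x)=\bigl(\prod_{j\neq i}h_j(x_j)\bigr)\D_{h_i}(z,x_i)$, a positive multiple of the single-block Bregman distance generated by $h_i$ alone. Since the $h$ of \eqref{eq:multih4h2} is a polynomial, $\interior\dom h$ is the whole space, so there are no boundary complications and well-definedness of the iterates is guaranteed by \cref{pro:proxPro}. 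I treat the two blocks in turn.

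\emph{The $U$-block.} Here $h_1$ is a scaled squared Frobenius norm, so the subproblem is Euclidean. With $\bm x^{k,0}=(U^k,V^k)$ one has $\D_h((U,V^k),(U^k,V^k))=h_2(V^k)\D_{h_1}(U,U^k)=\eta_1\cdot\tfrac{\beta_1}{2}\|U-U^k\|_F^2$; inserting this into \eqref{eq:xik1}, completing the square, and using $g_1=\delta_{C_1}$ identifies $U^{k+1}$ with the Euclidean projection $\proj_{C_1}(U^k-\mu_1\nabla_U f(U^k,V^k))$, $\mu_1=\gamma_1/(\beta_1\eta_1)$. Since $C_1=\set{U\geq0}$, the projection is the componentwise positive part, and together with $\nabla_U f(U,V)=UVV^T-XV^T$ (obtained in the proof of \cref{pro:relSmoothNMF0}) this is exactly \eqref{eq:uk01}.

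\emph{The $V$-block.} Now $\bm x^{k,1}=(U^{k+1},V^k)$, the partial Bregman distance is $\eta_2\D_{h_2}(V,V^k)$ with $\eta_2=h_1(U^{k+1})$, and $\D_{h_2}(V,V^k)=h_2(V)-h_2(V^k)-\innprod{\nabla h_2(V^k)}{V-V^k}$ where $\nabla h_2(V)=(\alpha_2\|V\|_F^2+\beta_2)V$. Discarding the terms not depending on $V$ and rescaling by $\mu_2=\gamma_2/\eta_2>0$ (which leaves $g_2=\delta_{C_2}$ unchanged, being positively homogeneous of degree zero), \eqref{eq:xik1} reduces to $V^{k+1}\in\argmin_{V\geq0}\set{\tfrac{\alpha_2}{4}\|V\|_F^4+\tfrac{\beta_2}{2}\|V\|_F^2-\innprod{P}{V}}$ with $P:=\nabla h_2(V^k)-\mu_2\nabla_V f(U^{k+1},V^k)=(\alpha_2\|V^k\|_F^2+\beta_2)V^k-\mu_2\nabla_V f(U^{k+1},V^k)$ and $\nabla_V f$ as in \eqref{eq:vk01}. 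To solve this, decompose $V=tW$ with $t=\|V\|_F\geq0$, $\|W\|_F=1$, $W\geq0$: the objective becomes $\tfrac{\alpha_2}{4}t^4+\tfrac{\beta_2}{2}t^2-t\innprod{P}{W}$, so for each fixed $t>0$ the optimal $W$ maximizes $\innprod{P}{W}$ over the nonnegative unit sphere, i.e.\ $W=P_+/\|P_+\|_F$ with $P_+:=\max\set{P,0}$ (and $V^{k+1}=0$ if $P_+=0$). It remains to minimize a strictly convex univariate function over $t\geq0$; equivalently, the normal-cone optimality condition $(\alpha_2\|\bar V\|_F^2+\beta_2)\bar V=P-\xi$, $\xi\in N_{C_2}(\bar V)$, forces $\bar V_{ij}>0\iff P_{ij}>0$, hence $\bar V=P_+/t_k$ with $t_k:=\alpha_2\|\bar V\|_F^2+\beta_2$, and the self-consistency of this relation is a cubic equation in $t_k$ having a unique positive real root. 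Removing its quadratic term by the shift $t_k=y+\beta_2/3$ and applying Cardano's formula — one checks that the quantity under the square root in \eqref{eq:ckSol} is nonnegative, so the expression stays real — gives the closed form \eqref{eq:ckSol}, whence $V^{k+1}=P_+/t_k$, i.e.\ \eqref{eq:vk01}.

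The routine parts are the completion of the square, the expansion of $\D_{h_2}$, and the Cardano bookkeeping; the degenerate case $P_+=0$ (where $V^{k+1}=0$ and the formula collapses to $t_k=\beta_2$) should be recorded separately. The one genuinely delicate point is the $V$-subproblem: one must argue that its minimizer is necessarily a nonnegative rescaling of $P_+$ — which relies on the purely radial dependence of $h_2$ on $\|V\|_F$ (equivalently, on the normal-cone condition above) — and then that the induced scalar equation for $t_k$ is a cubic with exactly one admissible root, namely the expression in \eqref{eq:ckSol}. Existence and uniqueness of the block minimizers themselves are inherited from strict convexity of $h_1,h_2$ and their $1$-coercivity, as already recorded in \cref{pro:proxPro}.
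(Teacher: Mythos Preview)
Your proposal is correct and follows essentially the same route as the paper: for the $U$-block you reduce the product-separable Bregman subproblem to a Euclidean projection onto the nonnegative orthant, and for the $V$-block you arrive (via the normal-cone condition you state) at $(\alpha_2\|V^{k+1}\|_F^2+\beta_2)V^{k+1}=\proj_{V\geq0}(P)$ and hence at the cubic in $t_k$, which is exactly the paper's argument. Your additional polar-decomposition viewpoint $V=tW$ and the explicit handling of the degenerate case $P_+=0$ and of uniqueness of the admissible root are welcome elaborations, but not a different method.
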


\begin{proof}
	Setting $g_1:=\delta_{U\geq 0}$ and $f(U,V)=\tfrac{1}{2}\|X-UV\|_F^2+\tfrac{\lambda}{2}\|I_r-VV^T\|_F^2$, it follows from \eqref{eq:tix} that
	\begin{align*}
		U^{k+1}&=  \argmin_{U\in\R^{m\times r}} \set{\innprod{\nabla_U f(U^k,V^k)}{U-U^k} +\tfrac{1}{\gamma_1}\D((U,V^k),(U^k,V^k))+ g_1(U)}\\
		&=\argmin_{U\in\R^{m\times r}} \set{\tfrac{\beta_1\eta_1}{2\gamma_1}\|U-(U^k-\tfrac{\gamma_1}{\beta_1 \eta_1}\nabla_U f(U^k,V^k))\|_F^2+ g_1(U)}\\
		&=\proj_{U\geq 0}(U^k-\mu_1\nabla_U f(U^k,V^k)),
	\end{align*}
	with $\nabla_U f(U^k,V^k)= U^k V^k(V^k)^T-X(V^k)^T$, giving \eqref{eq:uk01}.
	
	By setting $g_2:=\delta_{V\geq 0}$ and invoking \eqref{eq:tix}, we infer
	\begin{align*}
		V^{k+1}&=  \argmin_{V\in\R^{r\times n}} \set{\innprod{\nabla_V f(U^{k+1},V^k)}{V-V^k} +\tfrac{1}{\gamma_2}\D((U^{k+1},V),(U^{k+1},V^k))+ g_2(V)}\\
		&=  \argmin_{V\in\R^{r\times n}} \set{g_2(V)+h_2(V)-\innprod{\nabla h_2(V^k)-\mu_2\nabla_V f(U^{k+1},V^k)}{V-V^k}}.
	\end{align*}
	Let us consider the normal cone $\mathcal{N}_{V\geq 0}(V^{k+1})=\set{P\in\R^{r\times n}\mid V^{k+1}\odot P=0,\ P\leq 0}$ (see \cite[Corollary 3.5]{tam2017regularity}), where $V\odot P$ denotes the \DEF{Hadamard products} given pointwise by $(V\odot P)_{ij}:=V_{ij}P_{ij}$ for $i\in{1,\ldots,r}$ and $j\in{1,\ldots,n}$. The first-order optimality conditions for the latter identity leads to $G^k-(\alpha_2\|V^{k+1}\|_F^2+\beta_2)V^{k+1}\in\mathcal{N}_{V\geq 0}(V^{k+1})$ with $G^k:=\nabla h_2(V^k)-\mu_2\nabla_V f(U^{k+1},V^k)$. Let us consider two cases: (i) $G_{ij}\leq 0$; (ii) $G_{ij}> 0$. In Case (i), $P_{ij}=G_{ij}^k-(\alpha_2\|V^{k+1}\|_F^2+\beta_2)V_{ij}^{k+1}\leq 0$, i.e., $V_{ij}^{k+1}=0$. In Case (ii), if $V_{ij}^{k+1}=0$, then $P_{ij}=G_{ij}^k>0$, which contradicts $P\leq 0$, i.e.,  $G_{ij}^k-(\alpha_2\|V^{k+1}\|_F^2+\beta_2)V_{ij}^{k+1}=0$.  Combining both cases, we come to the equation
	\[
		(\alpha_2\|V^{k+1}\|_F^2+\beta_2)V^{k+1}=\proj_{V\geq 0}(G^k),
	\]
	i.e., there exists $t_k\in \R$ such that $t_k V^{k+1}=\proj_{V\geq 0}(G^k)$ that eventually lead to
	\begin{align*}
		t_k^3-\beta_2 t_k^2-\alpha_2 \|\proj_{V\geq 0}(G^k)\|_F^2=0,
	\end{align*}
	which is a Cardano equation and its solution is given by \eqref{eq:ckSol}.
\end{proof}

\subsection{Preliminary numerical experiment}\label{sec:numExper}
In this section, we report preliminary numerical experiments with \refBPALM[] and two variants of \refaBPALM[], namely,
\begin{enumerate}
	\item A-BPALM1: the algorithm \refaBPALM[];
	\item A-BPALM2: the variant of \refaBPALM[] as described in \Cref{rem:varA-BPALM}.
\end{enumerate}
Since the unconstrained ONMF problem \eqref{eq:orthNMF2} involves the penalty term $\tfrac{\lambda}{2}\|I_r-VV^T\|_F^2$, we also consider a ``continuation" variant of these algorithm that starts from some $\lambda>0$, run one of the above-mentioned algorithms until some stopping criterion holds and save its best point, and then it increases the penalty parameter and run the algorithm with the starting point as the best point of the last call, and it continues the procedure until we stop the algorithm. We refer to this procedure as \DEF{continuation}, which we will describe next in more details. 

 \vspace{-2mm}
\begin{algorithm*}[] 
\algcaption{Continuation procedure}%
\begin{algorithmic}[1]
\Require{%
	\(\bm x^0\in\R^{n_1}\times\ldots\times\R^{n_N}\),~ $\lambda>0$,~ $c>1$.%
}%
\Repeat
\State\label{state:call}%
    starting from $\bm x^0$; run one of BPALM, A-BPALM1, or A-BPALM2  to attain an inexact solution $ \overline{\bm x}$ of˜\eqref{eq:orthNMF2};
\State\label{state:pfbpalmk1}%
	 set $\bm x^0\gets\overline{\bm x}$,~ $\lambda \gets c \lambda$;%
\Until{some stopping criterion holds}
\Ensure{%
	$\overline{\bm x}$%
}%
\end{algorithmic}
\label{alg:continuation}
\end{algorithm*}%
 \vspace{-2mm}

In our Implementation all the codes were written in MATLAB (publicly available at \url{https://github.com/MasoudAhoo/BPALM}) and runs were performed on a MacBook Pro with 2,8 GHz Intel Core i7 CPU and 16 GB RAM. On the basis of our preliminary experiments, we here set $\alpha_2=\beta_2=\beta_1=1$ to provide the relative smoothness constants as described in \eqref{eq:l1l2upper}, and the related step-sizes are computed by $\gamma_i=\nicefrac{1}{L_i}-\epsilon$, when $\epsilon$ is set as the machine precision.  For A-BPALM1 and A-BPALM1, we set $\nu=2$, and we also set $\overline{L}_i^0=0.01L_i$ for A-BPALM1 and $\overline{L}_i^0=0.1L_i$ for A-BPALM2. For the continuation version, we set $c=\nicefrac{3}{2}$. 

We first report the experiment on a synthetic data set with $(m,n,r)=(200,2000,10)$. Our synthetic data set is generated as follows. We use the MATLAB command $\mathsf{rand}$ to  generate random nonnegative matrices $U \in \mathbb R_+^{m\times r}$ and $R\in \mathbb R_+^{m\times n}$, then we generate a random orthogonal nonnegative matrix $V\in \mathbb R_+^{r\times n}$. Next, we set $X=UV$ to obtain the $m$-by-$n$ orthogonal decomposable matrix $X$, and finally add 5\% of noise by $X=X+0.05 \frac{\| X\|_F}{\|R\|_F } R $. Now, we use SVD-based initialization for providing starting points for our algorithms, see \cite{Boutsidis2008}. We here run our algorithms with both fixed penalty parameter and with the continuation scheme. For fixed penalty versions, we set $\lambda=10$, and for continuation versions we started with $\lambda=10$ and stopped the inner algorithms every 3 seconds and increased $\lambda$ by factor $c=\nicefrac{3}{2}$. We stopped the algorithms after 15 seconds of the running time.

The results of our implementation are illustrated 
in \Cref{fig:syntheticData}. In this figure, Subfigure~(a) stands for fixed penalty versions while Subfigure~(b) stands for continuation versions. Hence, on Subfigure~(b), the penalty $\lambda$ is progressively increased. 
We make two observations: (i)
In both cases A-BPALM1 and A-BPALM2 outperform BPALM while A-BPALM1 is the best among them; (ii) The continuation schemes perform much better than the fixed penalty versions, especially for A-BPALM1. In fact, although the curve on Subfigure~(b) corresponds to a larger value of $\lambda$, A-BPALM1 achieves a much lower function value; namely around 20 on Subfigure~(a) vs.\@ 0.2 on Subfigure~(b). The reason is that increasing $\lambda$ leads to a better solution where the factor $V$ is closer to orthogonality hence closer to the ground truth. 

\begin{figure}[h!]
\centering
\begin{subfigure}[t]{0.45\textwidth}
\centering
{\includegraphics[width=5.7cm]{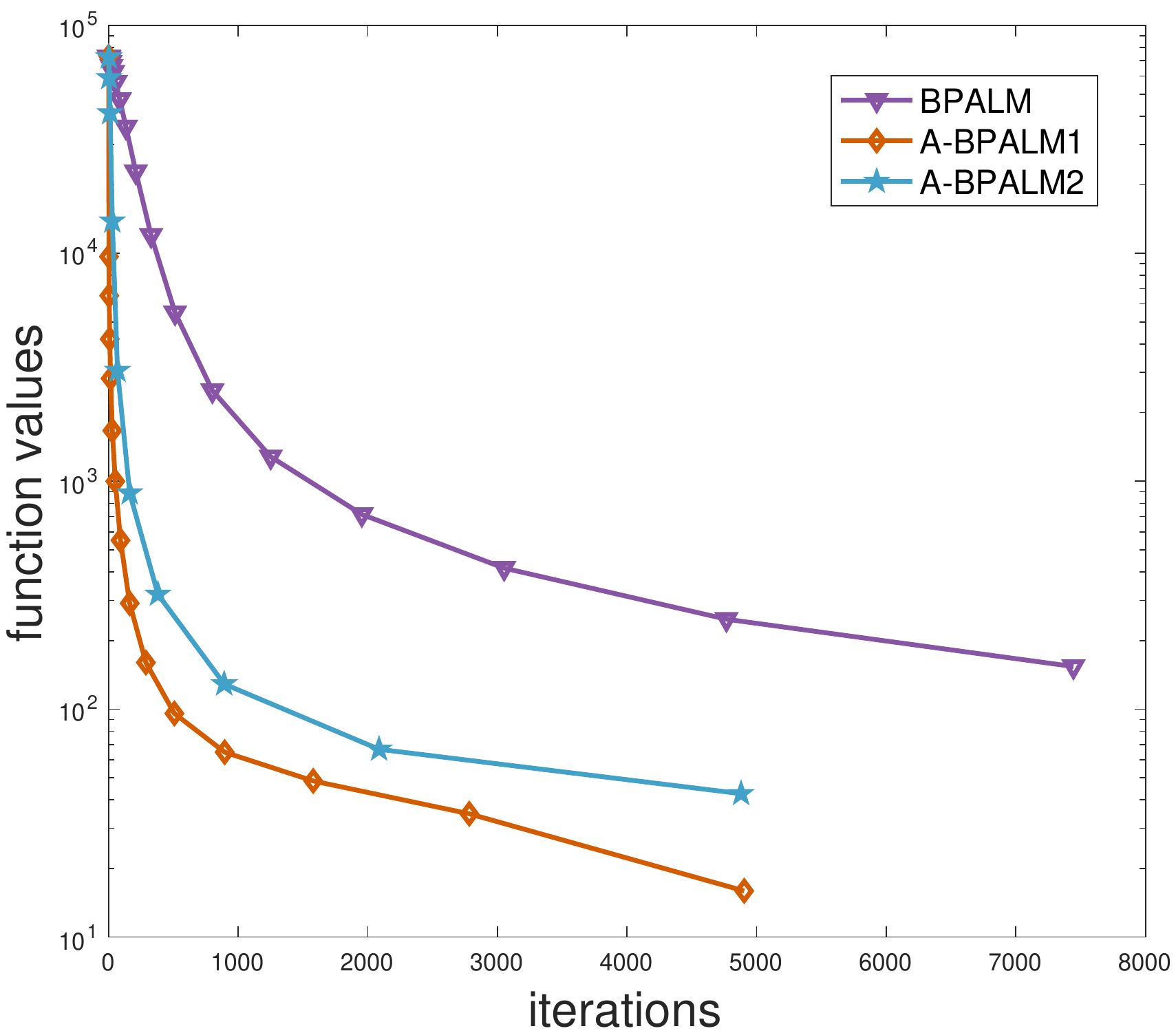}}%
\caption{Algorithms with fixed penalty parameter}
\end{subfigure}
\qquad
\begin{subfigure}[t]{0.45\textwidth}
\centering
{\includegraphics[width=5.7cm]{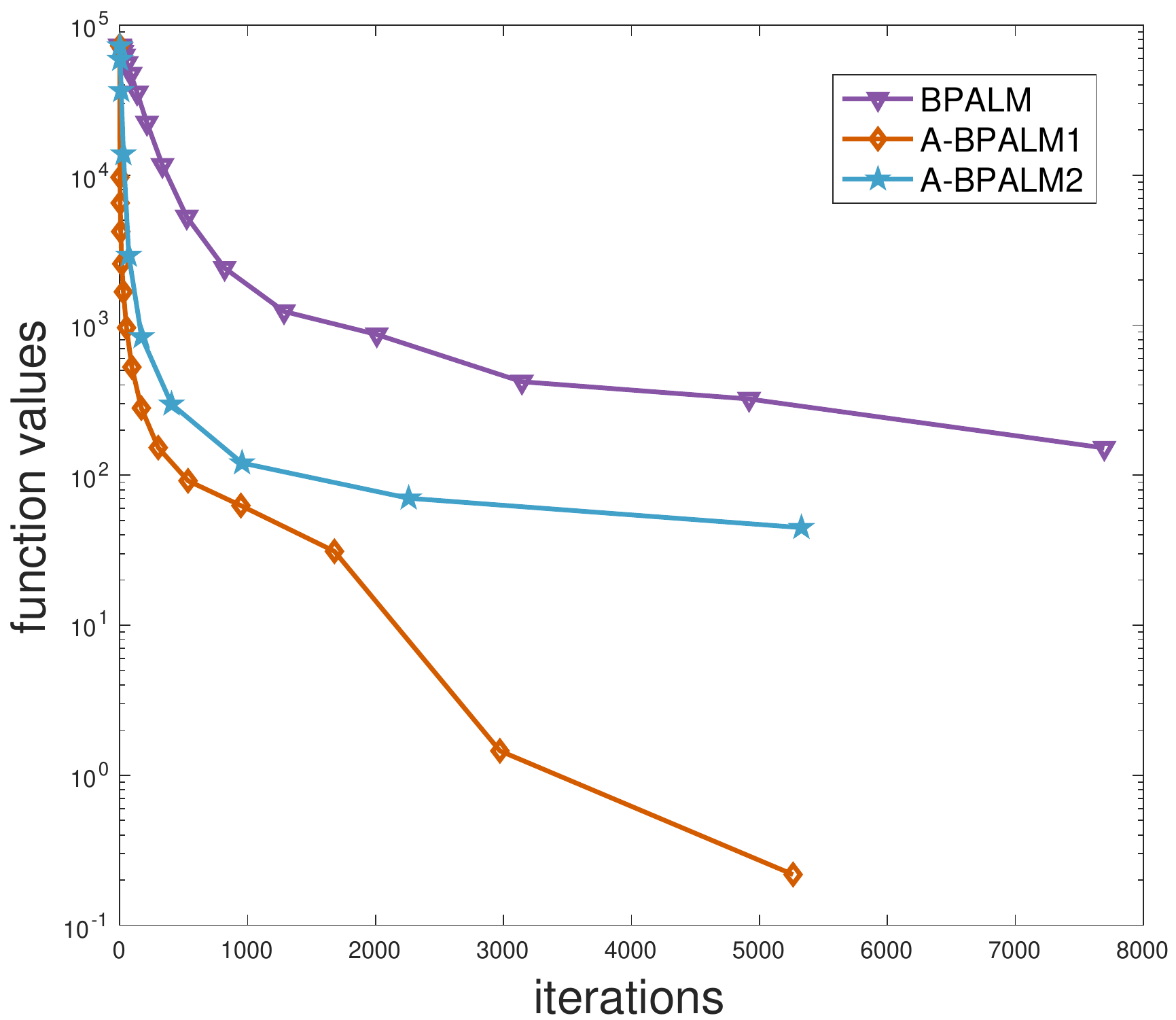}}%
\caption{Algorithms with continuation}
\end{subfigure}
\vspace{-2mm}
\caption{A comparison among BPALM, A-BPALM1, and A-BPALM2 for the synthetic data, where the algorithms stopped after 15 seconds of running time. For algorithms with a fixed penalty parameter, we set $\lambda=10$, and for the algorithms with the continuation procedure, we start from $\lambda=10$ and increase this parameter by factor $3/2$ every 3 seconds. Note that the $y$-axis has different scales on both figures.}
\label{fig:syntheticData}
\end{figure}

We next report the performance of our algorithms on the Hubble telescope data set which is taken from \cite{Pauca2006}. Since the continuation versions of our algorithms perform better, we here only apply the continuation versions of BPALM, A-BPALM1, and A-BPALM2. 
We use the SVD-based initialization as in \cite{pompili2014two}. In this problem, each row of the matrix $X$ is a vectorized image of the Hubble telescope at a given wavelength for a total of $m=100$ wavelengths. Each image contains $n=128\times 128$  pixels. Since each pixel in the image contains mostly a single material, it makes sense to use ONMF to cluster the pixel according to the material they contain (see \Cref{fig:hubbleData} for an illustration). 
For this application problem, we report the final relative fidelity and orthogonal errors, i.e., 
\[
F_{error}:=\nicefrac{\|X-U^kV^k\|_F}{\|X\|_F}, \quad O_{error}:=\|I-V^k(V^k)^T\|_F,
\]
with respect to several initial values for the penalty parameter $\lambda$ in the continuation procedure \Cref{alg:continuation}. The results of our implementations are reported in \Cref{t.l22l1} and the final outputs of the algorithms, along with the ground true Hubble image, are illustrated in \Cref{fig:hubbleData}.
 
 \vspace{-2mm}
\begin{table}[htbp]
\caption{A comparison among BPALM, A-BPALM1, and A-BPALM2 for the Hubble image, where the algorithms stopped after 90 seconds of running time. Here, $F_{error}$ stands for the relative fidelity error $\|X-U^kV^k\|_F/\|X\|_F$ while $O_{error}$ denotes the orthogonal error $\|I-V^k(V^k)^T\|_F$. In each row, the smallest number of $F_{error}$ and $F_{error}$ are displayed in bold.} 
\label{t.l22l1}
\begin{center}\footnotesize
\renewcommand{\arraystretch}{1.25}
\begin{tabular}{|l|ll|ll|ll|}
\hline
\multicolumn{1}{|c}{Penalty par.}                   
&\multicolumn{2}{|c}{BPALM}   
&\multicolumn{2}{|c}{A-BPALM1}   
&\multicolumn{2}{|c|}{A-BPALM2}  \\ 
\hline
$\lambda$ & $F_{error}$ & $O_{error}$ & $F_{error}$ & $O_{error}$ 
& $F_{error}$ & $O_{error}$\\ 
\hline
1 &  $6.75\times 10^{-2}$ & $8.28\times 10^{-2}$ & ${\bf 5.23\times 10^{-2}}$ & ${\bf 2.60\times 10^{-2}}$ & $5.32\times 10^{-2}$ & $3.25\times 10^{-2}$ \\
10 &  $1.54\times 10^{-1}$ & $4.35\times 10^{-2}$ & ${\bf 6.04\times 10^{-2}}$ & ${\bf 8.36\times 10^{-3}}$ & $9.33\times 10^{-2}$ & $2.16\times 10^{-2}$ \\
100 &  $2.05\times 10^{-1}$ & $3.28\times 10^{-2}$ & ${\bf 9.21\times 10^{-2}}$ & ${\bf 5.15\times 10^{-3}}$ & $1.99\times 10^{-1}$ & $1.01\times 10^{-2}$ \\
1000 &  ${\bf 2.07\times 10^{-1}}$ & $3.51\times 10^{-2}$ & $2.09\times 10^{-1}$ & ${\bf 2.51\times 10^{-3}}$ & $2.48\times 10^{-1}$ & $6.83\times 10^{-3}$ \\
10000 &  $2.09\times 10^{-1}$ & $3.44\times 10^{-2}$ & ${\bf 2.62\times 10^{-2}}$ & ${\bf 1.45\times 10^{-3}}$ & $2.55\times 10^{-1}$ & $6.33\times 10^{-3}$ \\
\hline
\end{tabular}
\end{center}
\end{table}

\begin{figure}[h!]
\centering
\begin{subfigure}[t]{0.99\textwidth}
\centering
{\includegraphics[width=12.5cm]{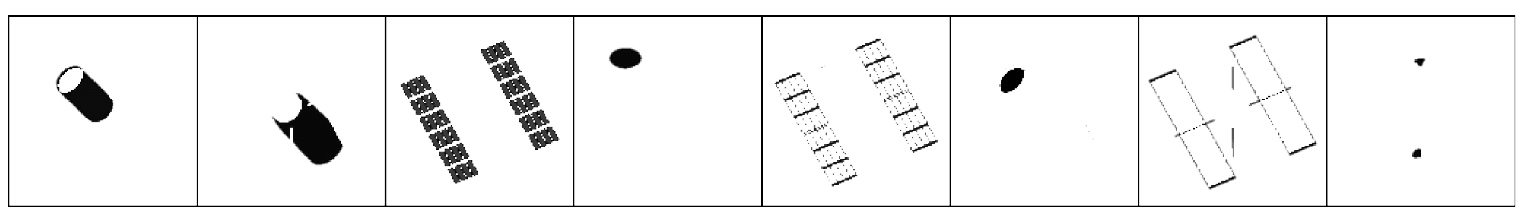}}%
\vspace{-1mm}
\caption{Ground truth image}
\end{subfigure}
\quad
\begin{subfigure}[t]{0.99\textwidth}
\centering
{\includegraphics[width=12.5cm]{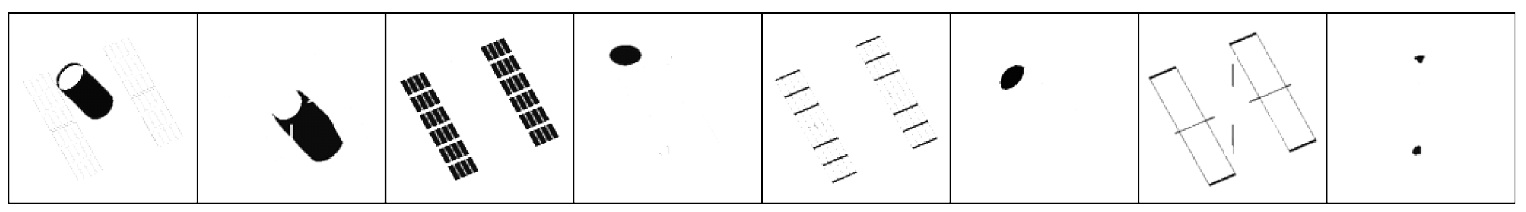}}%
\caption{BPALM,\ $F_{error}=1.54\times 10^{-1}$,\ $O_{error}=4.35\times 10^{-2}$}
\end{subfigure}
\quad
\begin{subfigure}[t]{0.99\textwidth}
\centering
{\includegraphics[width=12.5cm]{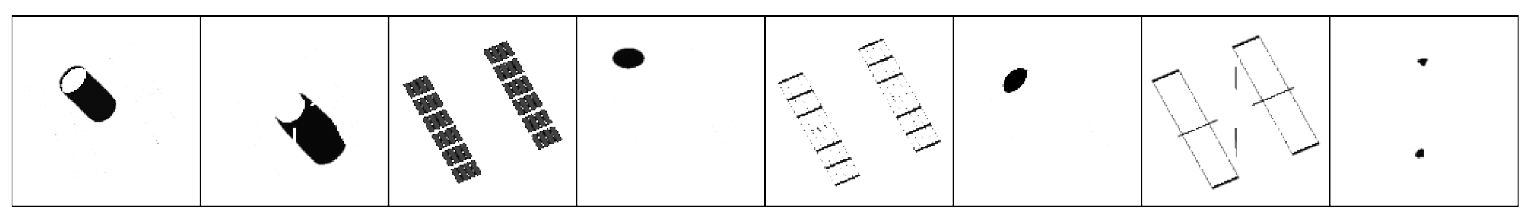}}%
\caption{A-BPALM1,\ $F_{error}=6.04\times 10^{-2}$,\ $O_{error}=8.36\times 10^{-3}$}
\end{subfigure}
\quad
\begin{subfigure}[t]{0.99\textwidth}
\centering
{\includegraphics[width=12.5cm]{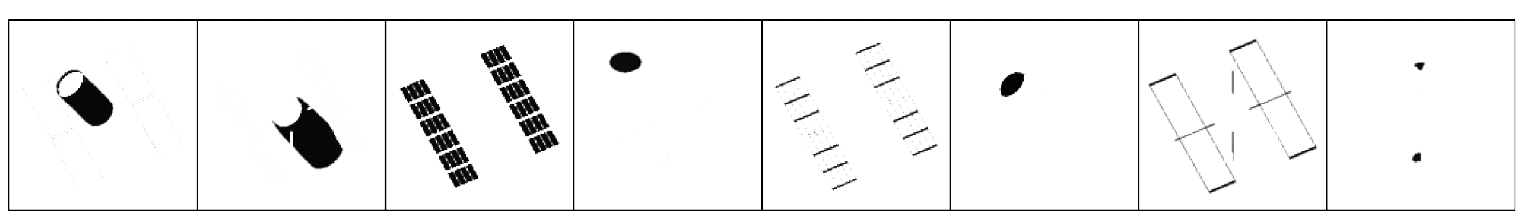}}%
\caption{A-BPALM2,\ $F_{error}=9.33\times 10^{-2}$,\ $O_{error}=2.16\times 10^{-2}$}
\end{subfigure}
 \vspace{-2mm}
\caption{A comparison among BPALM, A-BPALM1, and A-BPALM2 for the Hubble image, where the algorithms are stopped after 90 seconds of running time. Subfigure (a) shows the ground truth Hubble image, and Subfigures (b)-(d) are the results of BPALM, A-BPALM1, and A-BPALM2, respectively. 
In each subfigure, each image corresponds to a row of $V$ that has been reshaped as an image (since each entry corresponds to a pixel; see above).}
\label{fig:hubbleData}
\end{figure}

From \Cref{t.l22l1}, we observe that A-BPALM1 attains the better error than BPALM and A-BPALM2 in the sense of both the relative fidelity and orthogonal errors. Further, we observe that the orthogonal errors $O_{error}$ produced by the algorithms are decreasing by increasing the initial penalty parameter $\lambda$. From \Cref{fig:hubbleData}, A-BPALM1 provides slightly better quality image compared to BPALM and A-BPALM2 (look for example at the first basis image).

	\section{Conclusion}\label{sec:conclusion}
	We have analysed two new alternating linearized minimization algorithms called \refBPALM[] and \refaBPALM[] for solving the popular nonconvex nonsmooth optimization problem \eqref{eq:P}. Convergence analysis including the subsequential convergence, the global convergence and the convergence rate of the proposed algorithms is studied under the framework of multi-block relative smoothness and multi-block kernel functions. We emphasize that, to the best of our knowledge, \refBPALM[] and \refaBPALM[] are the first algorithms with rigorous convergence guarantee for solving ONMF in the literature.  We employ  \refBPALM[] and \refaBPALM[] to solve the orthogonal nonnegative matrix factorization problem. Some preliminary numerical tests are provided to illustrate the performance of our algorithms. 
A comprehensive numerical experiments with several data sets and comparison with state-of-the-art algorithms are out of the scope of the current paper, which we aim for future work.

%


	\vspace{-3mm}
	\ifsiam
		\bibliographystyle{siamplain}
	\else
		\bibliographystyle{plain}
	\fi
		\bibliography{TeX/Bibliography}

\end{document}